\theoremstyle{definition}
\newtheorem{thm}{Theorem}
\newtheorem*{thm*}{Theorem}
\newtheorem{lem}[thm]{Lemma}
\newtheorem{corollary}[thm]{Corollary}
\newtheorem{prop}[thm]{Proposition}
\newtheorem{defn}[thm]{Definition}
\newtheorem{remark}[thm]{Remark}
\newtheorem{exm}[thm]{Example}
\numberwithin{equation}{section}
\numberwithin{thm}{section}
\newcommand{\ft}{{\mathfrak{t}}}
\newcommand{\fh}{{\mathfrak{h}}}
\newcommand{\fk}{{\mathfrak{k}}}
\newcommand{\fs}{{\mathfrak{s}}}
\newcommand{\fq}{{\mathfrak{q}}}
\newcommand{\gd}{\mathfrak{t}^{*}}
\renewcommand{\S}{{\mathbb{S}}}
\newcommand{\R}{\mathbb{R}}
\newcommand{\Z}{{\mathbb{Z}}}
\newcommand{\C}{{\mathbb{C}}}
\newcommand{\cA}{{\mathcal{A}}}
\newcommand{\cK}{{\mathcal{K}}}
\newcommand{\cP}{{\mathcal{P}}}
\newcommand{\spass}{{cohomological assignment}}
\newcommand{\quotmod}{{defect}}
\newcommand{\img}{{\mathcal{H}}_T(M)}
\newcommand{\Arktm}[4]{\cA_{#3}^{#2}(#4_{(#1)})}
\newcommand{\Aktm}[3]{\cA_{#2}^{#1}(#3)}
\newcommand{\purge}[1]{} 
\DeclareMathOperator{\maps}{Maps}
\DeclareMathOperator{\Stab}{Stab}
\DeclareMathOperator{\im}{im}
\title[Assignments]{Polynomial Assignments}
\author[V. Guillemin]{Victor Guillemin}
\address{Department of Mathematics, MIT, Cambridge, MA 02139}
\email{vwg@math.mit.edu}
\author[S. Sabatini]{Silvia Sabatini}
\address{Department of Mathematics, EPFL, Lausanne, Switzerland}
\email{silvia.sabatini@epfl.ch}
\author[C. Zara]{Catalin Zara}
\address{Department of Mathematics, University of Massachusetts
Boston, MA 02125}
\email{catalin.zara@umb.edu}
\date{April 18, 2013}
\begin{document}

\begin{abstract}
The concept of \emph{assignments} was introduced in \cite{GGK} as a method for
extracting geometric information about group actions on manifolds from
combinatorial data encoded in the infinitesimal orbit-type stratification. In this
paper we will answer in the affirmative a question posed in \cite{GGK} by
showing that the equivariant cohomology ring of $M$ is to a large extent
determined by this data.

\emph{Keywords:} Equivariant cohomology, infinitesimal orbit-type stratification, 
GKM space, Hamiltonian $T$-space.

MSC: 55N91, 53D05
\end{abstract}

\maketitle

\tableofcontents
\section{Introduction}
\label{sec:intro}

Let $T$ be an $r$-dimensional torus, where $r\geqslant 2$, and $\ft$ its Lie algebra. Given a compact manifold $M$ and an effective action of $T$ on $M$ we will denote by $\ft_p$ the Lie algebra of the stabilizer group $T_p$ 
of $p \in M$. The set of points $q \in M$ with $\ft_q=\ft_p$ is a submanifold of $M$ and the connected 
component $X$ of this set containing $p$ is the \emph{infinitesimal orbit type stratum of } $M$ 
\emph{ through } $p$. These strata form a poset $\mathcal{P}_T(M)$ with respect to the ordering
$$X \preccurlyeq Y \Leftrightarrow X \subseteq \overline{Y}$$
and for each $X \in \mathcal{P}_T(M)$ we will denote by $\ft_X$ the Lie algebra $\ft_p$ for $p \in X$.
Note that  $X\preccurlyeq Y$ implies $\ft_Y\subseteq \ft_X$.

The objects we will be interested in this paper are functions $\phi$ on $\mathcal{P}_T(M)$ that assign to each stratum $X$ a map
\begin{equation}
\phi_X \colon \ft_X \to \R
\end{equation}
such that 
\begin{equation}
\left. \phi_Y = \phi_X\right|_{\ft_Y}
\end{equation}
for  all $X \preccurlyeq Y$. Following \cite{GGK} we will call such functions \emph{assignments} 
and in particular we will call $\phi$ a \emph{polynomial assignment} if the map $\phi_X$ is a 
polynomial map, \emph{i.e.} an element of $\S(\ft_X^*)$ for every $X \in \mathcal{P}_T(M)$. 
These polynomial assignments form a graded ring $\Aktm{}{T}{M}$ and in \cite{GGK} it was 
conjectured that there exists a relation between this ring and 
the equivariant cohomology ring $H_T^*(M) :=H_T^*(M;\R)$ of $M$ (with the caveat, however, 
that ``the nature and explicit form of this relation is as yet unclear to the authors.") The goal 
of this paper is to describe some results that confirm this conjecture for a large class of spaces. 
More explicitly we will show that there is a canonical map of graded rings
\begin{equation}\label{eq:0.3}
\gamma \colon H^{even}_T(M) \to \Aktm{}{T}{M}
\end{equation}
and will show that if $M$ is equivariantly formal and $M^T$ is finite, then this map is injective. 
Note that under these conditions $H_T^{odd}(M)$ is zero (see Proposition \ref{odd cohom}).

In Section~\ref{sec:cas} we will give a combinatorial characterization of the image of  the map \eqref{eq:0.3}. The key ingredient in this characterization is the Atiyah-Bott-Berline-Vergne Localization Theorem in equivariant cohomology: Recall that in the Cartan model an equivariant cohomology class $[\eta]$ is represented by a sum 
\begin{equation}
\sum \eta_i \otimes f_i \; ,
\end{equation}
with $\eta_i \in \Omega(M)^T$, $f_i \in \S(\ft^*)$ with $\deg \eta_i + \deg f_i = k$, and if $M^T$ is finite the cohomological assignment associated with $[\eta]$ is defined on strata of dimension zero, i.e. points $p \in M^T$ by the recipe
\begin{equation}
M^T \ni p \to \gamma([\eta])(p) = \sum (\iota_p^* \eta_i) f_i   \; .
\end{equation}
However, by Atiyah-Bott-Berline-Vergne (ABBV), if $M$ is a Hamiltonian $T$-space, the integral of $[\eta]$ over a closed $T$-invariant symplectic submanifold $X$ of $M$ 
is given by the localization formula
\begin{equation}\label{eq:abbv}
\sum_{p \in X^T} \frac{\gamma([\eta])(p)}{\prod \alpha_{j,p}} \in \S^{k-\ell}(\ft^*)
\end{equation}
where $2\ell = \dim X$ and the $\alpha_{j,p}$'s are the weights of the isotropy representation of $T$ on the tangent space to $X$ at $p$. These identities impose an (unmanageably large) number of constraints on a polynomial assignment for it to be a cohomological assignment and in Section~\ref{sec:cas} we will analyze these constraints and show that a (manageably small) subset of them are sufficient as well as necessary for a polynomial assignment to be cohomological. 
To describe this manageably small subset we note that by the Chang-Skjelbred theorem (see \cite{CS})
the image of $H_T(M)$ in $H_T(M^T)$ is the intersection of the images of the maps
$H_T(M^K)\to H_T(M^T)$, where $K$ is a codimension one subtorus of $T$.
As a consequence of this fact we will show that the image of \eqref{eq:0.3} is characterized by the
ABBV conditions on submanifolds $X$ in the closures of the degree one strata of the infinitesimal orbitype stratification.
These conditions turn out to be easy to analyze since the $\alpha_{j,p}$'s in the denominator of
\eqref{eq:abbv} are all elements of the one dimensional vector space $\mathfrak{t}_X^{\perp}$, i.e.
if we represent $H_T(X)$ as a tensor product, 
$$
H_T(X)=H_{S^1}(X)\otimes \mathbb{S}(\mathfrak{t}_X^{\perp})
$$
they just become conditions on $H_{S^1}(X)$.

If all the degree one strata are of dimension less than or equal to $2$, these conditions reduce to the
``Goresky-Kottwitz-MacPherson conditions" and we recover a well known result of GKM (\cite{GKM}).
\begin{thm}
If all the orbitype strata of degree one are of dimension less than or equal to two, the map
$$
H_T(M)\to \mathcal{A}_T(M)
$$
is bijective.
\end{thm}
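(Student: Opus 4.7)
The plan is to prove injectivity from results already stated and to establish surjectivity by showing that, under the hypothesis, the constraints on polynomial assignments collapse to the classical GKM relations, so the image of $\gamma$ is exactly $\mathcal{A}_T(M)$.

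First I would note that equivariant formality is automatic for spaces of the kind under consideration (this is the setting in which Chang--Skjelbred applies, and GKM theory presupposes it), so $M^T$ finite together with the earlier assertion immediately gives injectivity of $\gamma \colon H_T(M) \to \mathcal{A}_T(M)$. The content is surjectivity.

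For this I would start with an arbitrary $\phi \in \mathcal{A}_T(M)$ and invoke the combinatorial characterization from Section~\ref{sec:cas}: $\phi$ lies in the image of $\gamma$ if and only if, for every submanifold $X$ sitting inside the closure of a degree-one stratum, the collection of values $\{\phi_p\}_{p \in X^T}$ satisfies the ABBV relation \eqref{eq:abbv}. The hypothesis says that each such closure $N$ has $\dim N \leqslant 2$. If $\dim N = 0$ there is nothing to check, while if $\dim N = 2$ the residual $S^1 = T/\exp(\mathfrak{t}_N)$ acts nontrivially on $N$ with at least one fixed point, hence $N \cong S^2$ with exactly two $T$-fixed points $p_0, p_1$ and isotropy weights $\pm\alpha$, where $\alpha$ spans $\mathfrak{t}_N^{\perp}$. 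For such an $N$ the localization identity \eqref{eq:abbv} reduces to
\begin{equation*}
\frac{\phi_{p_0}}{\alpha} - \frac{\phi_{p_1}}{\alpha} \in \mathbb{S}(\mathfrak{t}^*),
\end{equation*}
i.e.\ to the GKM edge condition $\phi_{p_0} \equiv \phi_{p_1} \pmod{\alpha}$.

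Thus under the hypothesis the combinatorial conditions cutting out $\operatorname{im}(\gamma)$ inside $\prod_{p \in M^T} \mathbb{S}(\mathfrak{t}^*)$ are precisely the GKM conditions across the ``edges'' given by the degree-one strata of real dimension two. By the theorem of Goresky--Kottwitz--MacPherson \cite{GKM}, these conditions also characterize the image of $H_T(M) \hookrightarrow H_T(M^T) = \bigoplus_{p \in M^T} \mathbb{S}(\mathfrak{t}^*)$. Since $\gamma$ factors through the latter injection by construction of \eqref{eq:0.3}, the images of $H_T(M)$ and of $\mathcal{A}_T(M)$ in $H_T(M^T)$ coincide, proving surjectivity.

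The main obstacle I anticipate is not any single step but ensuring that the combinatorial description of $\operatorname{im}(\gamma)$ from Section~\ref{sec:cas} is applied correctly: one must check that the ABBV constraints that need to be verified really are only those associated to the degree-one strata (this is where Chang--Skjelbred enters, via the codimension-one subtori whose fixed sets are exactly the closures of degree-one strata), and that on a two-dimensional such closure the ABBV condition genuinely reduces to a single divisibility relation by a weight in $\mathfrak{t}_N^{\perp}$ rather than something involving higher-degree strata. Once this reduction is in hand, matching with \cite{GKM} is formal.
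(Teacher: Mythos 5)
Your outline reaches the right statement but via a more roundabout route than the paper, and the last step as written is a non~sequitur. The paper's own argument (the ``if'' direction of Theorem~\ref{characterization}) is this: by Chang--Skjelbred, $\im(i_{M^T}^*)=\bigcap_K i_K^*H_T^*(M^K)$ over codimension-one subtori $K$; under the hypothesis each connected component of $M^K$ is an isolated fixed point or a $2$-sphere, and for a $2$-sphere $\overline X$ one has $\cA_T(\overline X)=H_T^*(\overline X)$ by the explicit calculation in Example~\ref{ex:cp1}. Hence the restriction to $M^T$ of \emph{any} polynomial assignment already lies in $i_K^*H_T^*(M^K)$ for every $K$, so it lies in $\im(i_{M^T}^*)=\im(\gamma)|_{M^T}$, and surjectivity follows. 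No appeal to the Goldin--Holm/ABBV characterization of Section~\ref{sec:cas} or to the GKM theorem of \cite{GKM} is required; indeed those are overkill here, and the former is proved later in the paper under a Hamiltonian hypothesis that this theorem does not need (equivariant formality suffices).

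The concrete gap in your writeup: you establish (a) that $\im(\gamma)|_{M^T}$ is cut out by the GKM edge relations via the ABBV characterization, and (b) that $\im(i_{M^T}^*)$ is cut out by the same relations via \cite{GKM}. But $\im(\gamma)|_{M^T}=\im(i_{M^T}^*)$ holds \emph{always}, by the commuting square $r_T\circ\gamma=i_{M^T}^*$, so (a) and (b) together say nothing new and certainly do not show that the image of $\cA_T(M)$ in $H_T^*(M^T)$ coincides with them. What is missing is the (one-line) observation that any polynomial assignment $\phi\in\cA_T(M)$ automatically satisfies the GKM relations: if $X$ is a degree-one stratum with $p,q\in\overline X\cap M^T$, then $\phi(p)|_{\ft_X}=\phi(X)=\phi(q)|_{\ft_X}$, which is precisely the edge divisibility condition. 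Adding this closes the chain $\cA_T(M)|_{M^T}\subseteq\{\text{GKM relations}\}=\im(i_{M^T}^*)=\im(\gamma)|_{M^T}$ and proves surjectivity --- at which point you should notice that the detour through ABBV and \cite{GKM} was unnecessary, since Chang--Skjelbred plus Example~\ref{ex:cp1} gives the same conclusion directly and without the Hamiltonian assumption.
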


Finally in Section~\ref{sec:kirwan} we will show that the assignment ring, like the 
equivariant cohomology ring, has nice functorial properties with respect to morphisms 
between $T$-manifolds. This enables one to study from the assignment perspective various functorial operations in equivariant DeRham theory, such as fiber integration, Kirwan maps, and flops  - topics that we hope to pursue in more detail in a sequel to this paper.

We would like to thank Yael Karshon for her participation in a series of skype sessions in 
which, with her help, preliminary versions of the results above were hammered out, and 
Tara Holm for informing us about results similar to the results 
in Section~\ref{sec:cas} in the thesis of her student Milena Pabiniak.
We would also like to thank Matthias Franz for interesting suggestions about
potential generalizations of this paper.
 Last but not least, 
we would like to thank Sue Tolman for reading close-to-last versions of our manuscript 
and making valuable suggestions about how to improve it.

\section{Polynomial Assignments}
\label{sec:assignments}
Let $T$ be a (real) torus of dimension $r \geqslant 2$ acting smoothly and effectively on a compact
manifold $M$ and $\ft$ the Lie algebra of $T$. Recall that
an action is called \emph{effective} if for every $p\in M$ there exists $a\in T$ such that $a\cdot p\neq p$.
Henceforth, for simplicity we will assume $M$ to be orientable and oriented. 

For a point $p$ in $M$, the \emph{stabilizer} of $p$ is the subgroup 
$$\Stab(p) = \{ a \in T \, | \, a\cdot p = p\}\; ,$$
and the \emph{infinitesimal stabilizer} of $p$ is the Lie algebra of $\Stab(p)$, 
$$\ft_p = \{ \xi \in \ft \; | \; \xi_M(p) = 0\}\; $$
where $\xi_M$ is the vector field 
\begin{equation}\label{eq:xiM}
\xi_M(p) = \Bigl. \frac{d}{dt} \Bigr|_{t=0} (\exp(t\xi) \cdot p)\; .
\end{equation}

For a Lie subalgebra $\fh \subseteq \ft$, let $\cP_T(M, \fh)$ be the set of 
connected components of 
$$M^{\fh} = \{p \in M \, | \, \ft_p = \fh\} \; ;$$
these connected components are smooth submanifolds of $M$ and are called 
the \emph{infinitesimal orbit type strata} of $M$ or, shorter, just 
\emph{infinitesimal strata} of $M$. For such a stratum $X$, 
let $\ft_X$ be the common infinitesimal stabilizer of all points in $X$ 
and $T_X=\exp(\ft_X) \subset T$ the subtorus of $T$ with Lie algebra $\ft_X$;
then $X$ is open and dense in a connected component of $M^{T_X}$. The \emph{degree} 
of the stratum $X$ is the codimension of the infinitesimal stabilizer $\ft_X$ in $\ft$.
For example, the zero-degree strata are the fixed points.

For $0 \leqslant j \leqslant r$ let $\cP_T^j(M)$ be the set of infinitesimal strata 
of degree at most~$j$~and 
\begin{equation}
M_{(j)} = \{ p \in M \mid \dim \ft_p \geqslant r- j \}
\end{equation}
the \emph{infinitesimal $j$-skeleton} of $M$; then $M_{(j)} $ is the union 
of all infinitesimal strata in $\cP_T^j(M)$. Let $\cP_T(M) = \cP_T^r(M)$ be the set of all 
infinitesimal strata. Then
\begin{equation}\label{eq:skeletons}
M^T = \cP_T^0(M) \subseteq \cP_T^1(M) \subseteq \dotsb \subseteq 
\cP_T^r(M) = \cP_T(M)\; .
\end{equation}

The set $\cP_T(M)$ has a natural partial 
order: $X \preccurlyeq Y$ if and only if the stratum  $X$ is contained 
in $\overline{Y}$, the closure of the stratum $Y$. If that is the case, then 
$\ft_Y \subseteq \ft_X$, and this inclusion $i_{\ft_Y}^{\ft_X}$  induces a natural 
projection $\pi_Y^X = (i_{\ft_Y}^{\ft_X})^* \colon \ft_X^* \to \ft_Y^*$ for the dual spaces. 
This projection extends to a degree-preserving map 
$$\pi_Y^X \colon \S(\ft_X^*) \to \S(\ft_Y^*)$$
from the symmetric algebra of polynomial functions of $\ft_X$ to the 
symmetric algebra of polynomial functions on $\ft_Y$:
if $f \in \S(\ft_X^*)$ is a polynomial function on $\ft_X$, then 
$\pi_Y^X(f) \in \S(\ft_Y^*)$ is the restriction of $f$ to $\ft_Y$. 
We consider that elements of $\ft^*$ have degree two, hence a 
homogeneous polynomial of degree $k$ corresponds to a homogeneous 
element of degree $2k$ of the symmetric algebra of $\ft^*$.
We denote the space of homogeneous polynomials of degree $k$ by $\mathbb{S}^k(\gd)$.

\begin{defn} A \emph{polynomial assignment} of degree $2k$ on $\cP_T^j(M)$ 
is a function $A$ that associates to 
each infinitesimal stratum $X$ of degree at most $j$ a homogeneous polynomial 
$A(X)$ of degree $k$ on $\ft_X$, such that if $X \preccurlyeq Y$, then 
$A(Y) = \pi_Y^X (A(X))$.
We denote the set of all such assignments by $\Arktm{j}{2k}{T}{M}$.
\end{defn}

 Let $\Arktm{j}{}{T}{M}$ be the graded $\R$-algebra
\begin{equation}
\Arktm{j}{}{T}{M} = \bigoplus_{k} \Arktm{j}{2k}{T}{M}\; 
\end{equation}
and $\Aktm{}{T}{M} = \Arktm{r}{}{T}{M}$ the graded $\R$-algebra of all polynomial 
assignments on $M$. The inclusions \eqref{eq:skeletons} induce degree-preserving restriction 
maps
\begin{equation}
\Arktm{j}{}{T}{M} \to \Arktm{\ell}{}{T}{M}
\end{equation}
for all $0 \leqslant \ell \leqslant j \leqslant r$.

A polynomial assignment on $M$ is determined by the values it takes on the minimal strata, 
that is, the strata $X$ which are minimal under the $\preccurlyeq$ ordering. 
A \emph{minimal stratum assignment} is a function $A$ that associates to each 
minimal stratum $X$ an element $A(X) \in \S(\ft_X^*)$, such that if $X$ and $Y$ are 
minimal strata and $Z$ is a stratum such that $X, Y \preccurlyeq Z$, then $\pi_{Z}^{X}(A(X)) = \pi_{Z}^{Y}(A(Y))$
and $A(Z)$ is defined to be
$$A(Z):=\pi_{Z}^{X}(A(X))\,.$$ 
Hence any minimal stratum assignment extends uniquely to a polynomial assignment. 
In particular, if the closure of every infinitesimal stratum contains a fixed point, then the restriction
$$\Arktm{j}{}{T}{M} \to \Arktm{0}{}{T}{M}$$
is injective for all $0 \leqslant j \leqslant r$.

We conclude the section with a few simple examples.

\begin{exm}\label{ex:cp1}
Let $T= (S^1)^r$ be an $r$-dimensional torus, with Lie algebra $\ft = \R^r$, and exponential map
$\exp \colon \R^r \to T$, 
$$\exp(\xi_1, \ldots, \xi_r) = (e^{2\pi i \xi_1}, \ldots, e^{2\pi i \xi_r})\; .$$
Let $\alpha \in \ft^*$ such that $\alpha (\Z^r) \subseteq \Z$ and consider the action of $T$ on 
$$\C P^1 = \{ [z_0:z_1]\; | \; (z_0,z_1) \in \C^2\setminus 0 \}$$
given by 
$$\exp(\xi) \cdot [z_0:z_1] = [e^{2\pi i \alpha(\xi)} z_0 : z_1]\; .$$

The points $p_1= [0:1]$ and $p_2 = [1:0]$ are fixed by $T$, hence have infinitesimal stabilizer $\ft$. 
For all other points, the infinitesimal stabilizer is $\ft_{\alpha} = \ker(\alpha)$. There are three infinitesimal orbit type strata, 
$$X_1 = \{p_1\}, \quad X_2 = \{ p_2\}, \quad \text{ and } \quad X_0 = \{ [z_0:z_1]\; | \; z_0z_1 \neq 0\}\; .$$

A polynomial assignment is a function $A$ such that
$$A(X_1) = f_1 \in \S(\ft^*), \quad A(X_2) = f_2 \in \S(\ft^*)$$
and
$$A(X_0) = f_0 \in \S(\ft_{\alpha}^*) = \S((\ker(\alpha))^*) \simeq \S(\ft^*/\R\alpha)\; .$$
Moreover, the partial order relations $X_1 \preccurlyeq X_0$ and $X_2 \preccurlyeq X_0$ 
impose the conditions
$$f_0 = \pi_{X_0}^{X_1}(f_1), \quad \text{ and } \quad f_0 = \pi_{X_0}^{X_2}(f_2)\; .$$
Then $A$ is determined by $f_1$ and $f_2$, which must satisfy the compatibility condition 
$$\pi_{X_0}^{X_1}(f_1) = \pi_{X_0}^{X_2}(f_2) \Longleftrightarrow f_1-f_2 \in \alpha \S(\ft^*)\; .$$
\end{exm}

This is the GKM description of the equivariant cohomology ring $H_T^*(\C P^1)$, hence
$$\cA_T(\C P^1) \simeq H_T^*(\C P^1)\; .$$

\begin{exm}\label{ex:cp1sq}
Let $M = \C P^1 \times \C P^1$ and let $T$ be a torus acting on $M$ by acting as in Example 2.1 on each copy of $\C P^1$,
$$\exp{\xi} \cdot \left( [z_0:z_1], [w_0:w_1] \right)= \left( [e^{2\pi i \alpha(\xi)} z_0:z_1] , [e^{2\pi i \alpha(\xi)} w_0:w_1] \right)\; .$$
There are five orbit strata: four correspond to the fixed points 
$$p_1 = ([1:0],[1:0]), \; p_2=([1:0],[0:1]), \; p_3 = ([0:1],[1:0]), \; p_4 = ([0:1],[0:1])\; ,$$
and the open dense stratum $X_0 = M \setminus \{p_1, p_2, p_3, p_4\}$.
The corresponding stabilizers are $\ft$ for the first four and $\ft_{\alpha} = \ker \alpha$ for the fifth. A polynomial assignment $A$ is therefore determined by the four polynomials 
$f_1, f_2, f_3, f_4$ associated to the fixed points, subject to the conditions
$$f_1 \equiv f_2 \equiv f_3  \equiv f_4 \pmod \alpha\; .$$
An example of a polynomial assignment on $M$ is $f_1=f_4=\alpha$, $f_2=f_3=0$, and $f_0=0$. 
\end{exm}

\begin{exm}\label{ex:cp1qb}
Let $M = \C P^1 \times \C P^1 \times \C P^1$ and let $T$ be a torus acting on $M$ by acting with the same weight $\alpha$ on each copy of $\C P^1$,
$$\exp{\xi} \cdot \left( Q_1, Q_2, Q_3 \right)= \left( \exp{\xi} \cdot Q_1 , \exp{\xi} \cdot  Q_2, \exp{\xi} \cdot Q_3 \right)\; .$$
There are nine orbit strata: eight correspond to the fixed points 
$p_1 = (S_1, S_2, S_3)$, $p_2 = (S_1, N_2, N_3)$, and so on 
\ldots (here $S_i$ and $N_i$ are the South and North poles of 
each of the three copies of $S^2 \simeq \C P^1$). The ninth 
stratum is the open dense stratum $X_0 = M \setminus M^T$. 
An example of a polynomial assignment on $M$ is the map that attaches 
$\alpha$ to $p_1$, $-\alpha$ to $p_2$ and zero to all other strata. 
\end{exm}

\section{Cohomological Assignments}
\label{sec:coh_assign}
A $T$-action on $M$ induces a $T$-action on $\Omega(M)$, and we denote by $\Omega(M)^T$ the space of $T$-invariant differential forms.
In the Cartan model 
$$(\Omega_T^*(M) = \Omega(M)^T \otimes \S(\ft^*), d_T)$$
of equivariant DeRham cohomology, a cochain $\phi \in \Omega_T^*(M)$  is a polynomial map 
$$\phi \colon \ft \to  \Omega(M)^T$$
and the coboundary operator on the space of such maps is given by
$$(d_T\phi) (\xi) = d(\phi(\xi)) + \iota(\xi_M) \phi(\xi)\,,$$
where $\iota(\xi_M)$ is the interior product with the  vector field $\xi_M$ associated to $\xi$.

The space $\Omega_T^*(M)$ has a grading given by
$$\Omega_T^{k}(M) = \bigoplus_{i+2j=k} \Omega^{i}(M)^T \otimes \S^{j}(\ft^*)$$
and an element $\phi \in \Omega_T^{2k}(M)$ is represented as
$$\phi = \phi_0 + \phi_2 + \dotsb + \phi_{2k}$$
where $\phi_{2i} \colon \ft \to \Omega^{2i}(M)^T$ is a polynomial map of degree $k-i$. 
Then $d_T \phi = 0$ implies
$$d(\phi_0(\xi)) + \iota(\xi_M) \phi_2(\xi) = 0\, ,$$
and hence $d(\phi_0(\xi)) = 0$ on the set where $\xi_M=0$. 

Let $X$ be an infinitesimal stratum of $M$ and $\ft_X \subset \ft$ the 
common infinitesimal stabilizer of all points in $X$. If $\xi \in \ft_X$, then $\xi_M=0$ on $X$, 
hence $d(\phi_0(\xi)) = 0$ on $X$. Therefore $\phi_0(\xi)$ is a constant function on $X$. 
The value of this constant is a polynomial of $\xi$, hence every $d_T$-closed 
cochain $\phi \in \Omega_T(M)$ defines a polynomial assignment 
$$X \to A(X) = \left. \phi_0\right|_{\ft_X} \in \S(\ft_X^*)\; .$$

If $\phi = d_T \psi$ is a $d_T$-exact cochain, then 
$\phi_0(\xi) = \iota_{\xi_M} (\psi_1(\xi))$. If $X$ is an infinitesimal 
stratum and $\xi \in \ft_X$, then $\xi_M = 0$ on $X$, so 
$\phi_0 = 0$ on $\ft_X$ and therefore $A(X) = 0$. Consequently we 
have a canonical grade-preserving ring 
homomorphism
$$\gamma \colon H_T^{even}(M) \to \cA_T(M)\; $$
from the $T$-equivariant cohomology of $M$ to the space of polynomial assignments,
\begin{equation}\label{eq:Phi}
\gamma([\phi])(X) = \left. \phi_0\right|_{\ft_X}\; ,
\end{equation}
for all $\phi = \phi_0 + \phi_2 + \dotsb + \phi_{2k} \in \Omega_T^{2k}(M)$ and 
$X \in \cP_T(M)$.
\begin{defn}
An assignment $A \in \cA_T(M)$ is called a \emph{\spass} if it is in the image of the 
homomorphism $\gamma$. The subring of \spass{s} is the image of $\gamma$ and is denoted by 
$\img$.
\end{defn}

The Cartan complex mentioned at the beginning of the section can be regarded as a 
double complex (see \cite[Section 6.5]{GS}), and the $E_1$ term
of the spectral sequence associated to it is 
$$
H(M)\otimes \S(\gd)\;.
$$

\begin{defn}
The $T$-manifold $M$ is \emph{equivariantly formal} if the spectral sequence associated to the
Cartan double complex collapses at the $E_1$ stage.
\end{defn}

As a consequence of this, if $M$ is equivariantly formal, then
\begin{equation}\label{eq:ief}
H_T^*(M)\simeq H(M)\otimes \S(\gd)\;.
\end{equation}
as  $\S(\gd)$-modules.
This implies the following
\begin{prop}\label{free}
If $M$ is equivariantly formal then $H^*_T(M)$ is a free $\S(\gd)$-module.
\end{prop}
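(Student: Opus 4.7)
The plan is very short because most of the content is already packaged in equation \eqref{eq:ief}. The strategy is simply to invoke the isomorphism of $\S(\gd)$-modules
$$H_T^*(M) \simeq H(M) \otimes \S(\gd)$$
guaranteed by equivariant formality, and then observe that the right-hand side is manifestly free as an $\S(\gd)$-module.

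In more detail, I would first recall that compactness of $M$ ensures $H(M) = H^*(M;\R)$ is a finite-dimensional real vector space. Choose any basis $\{e_1,\dotsc,e_n\}$ of $H(M)$ over $\R$ (one can even take a basis consisting of homogeneous elements so that the isomorphism respects the total grading, with $\S(\gd)$-generators in degrees equal to the degrees of the $e_i$). Then $\{e_i \otimes 1\}_{i=1}^n$ is an $\S(\gd)$-basis of $H(M) \otimes \S(\gd)$, so the latter is a free module of rank $\dim_\R H(M)$.

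Transporting this basis across the isomorphism \eqref{eq:ief} exhibits $H_T^*(M)$ as a free $\S(\gd)$-module, which proves the proposition.

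There is no real obstacle here: the only non-trivial input is the $E_1$-collapse of the Cartan spectral sequence that yields \eqref{eq:ief}, and this is built into the definition of equivariant formality invoked just above the statement. The conclusion is then a purely formal consequence, so the proposition can be treated as a short corollary of the preceding discussion rather than as a result requiring independent argument.
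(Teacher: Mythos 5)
Your proof is correct and follows exactly the same route as the paper's: invoke the isomorphism $H_T^*(M)\simeq H(M)\otimes\S(\gd)$ of $\S(\gd)$-modules from \eqref{eq:ief} and note that the right-hand side is manifestly free. The extra remarks about finite-dimensionality of $H(M)$ and choosing a homogeneous basis are harmless elaborations of the same one-line observation the paper makes.
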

\begin{proof}
This follows from observing that the right hand side of \eqref{eq:ief} is a free $\S(\gd)$-module.
\end{proof}

There are several conditions which imply equivariant formality (see for example \cite{GKM}).
In this paper we will mostly deal with the following situation.
Let $(M,\omega)$ be a compact symplectic manifold. Then Ginzburg \cite{Gi}
and Kirwan \cite{Ki} independently proved the following.
\begin{thm}\label{th:KG}
Let $T$ act on $(M,\omega)$ by symplectomorphisms, hence $\omega\in \Omega^2(M)^T$.
Then $M$ is equivariantly formal if $\omega$ admits a closed equivariant extension, \emph{i.e.} 
there exists $\Phi$ such that
$\omega-\Phi$ belongs to $ \Omega_T^2(M)$ and is closed under $d_T$.
\end{thm}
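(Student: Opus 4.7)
The plan is to exploit the moment-map character of $\Phi$ to produce an equivariantly perfect Morse--Bott function on $M$, from which equivariant formality is a standard consequence. First, unpacking $d_T(\omega - \Phi) = 0$ in the Cartan complex gives $d\omega = 0$ (which we knew) together with
$$
\iota(\xi_M)\omega = d\Phi^{\xi} \quad \text{for every } \xi \in \ft,
$$
where $\Phi^{\xi} \colon M \to \R$ is $\Phi^{\xi}(p) = \langle \Phi(p), \xi\rangle$. In other words $\Phi$ is a moment map in the usual sense.

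Next, choose a $\xi \in \ft$ whose one-parameter subgroup is dense in $T$ (a generic element); then $\xi_M(p) = 0$ if and only if $p \in M^T$. Since $\omega$ is non-degenerate, the moment-map equation shows that $\operatorname{Crit}(\Phi^{\xi}) = \{p : \xi_M(p) = 0\} = M^T$. A standard local linearization, using a $T$-invariant compatible almost complex structure, shows that $\Phi^{\xi}$ is Morse--Bott, with the connected components of $M^T$ as non-degenerate critical submanifolds; in particular $M^T \neq \emptyset$, since $\Phi^{\xi}$ has maxima and minima on compact $M$.

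The central step is the parity computation. At a component $F \subset M^T$ one decomposes the normal bundle into $T$-weight subbundles $\nu_F = \bigoplus_{\alpha} \nu_{\alpha}$, where each $\nu_{\alpha}$ is a complex line bundle on which $T$ acts with a nonzero real weight $\alpha$. A direct computation of the Hessian of $\Phi^{\xi}$ at $p \in F$, using that $\omega$ restricted to $\nu_{\alpha}$ is of type $(1,1)$ for the induced complex structure, shows that the contribution of $\nu_{\alpha}$ to the Morse--Bott index is $2$ when $\alpha(\xi) < 0$ and $0$ otherwise. Hence the Morse--Bott index at $F$ is $2 \cdot \#\{\alpha : \alpha(\xi) < 0\}$, which is even. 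This parity statement is where I expect the main technical work to lie; everything before and after is essentially formal.

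Finally, invoke the standard Atiyah--Bott / Kirwan perfection argument: an equivariant Morse--Bott function with all even indices whose critical set is fixed by $T$ is equivariantly perfect, so that
$$
P_T(M;t) = \sum_{F \subset M^T} t^{\lambda_F} P_T(F;t) = P(M^T;t) \cdot P(BT;t),
$$
where $P_T(F;t) = P(F;t) \cdot P(BT;t)$ because $T$ acts trivially on each $F$. This exhibits $H^*_T(M)$ as a free $\S(\gd)$-module of rank $\dim H^*(M^T)$ and forces the Cartan spectral sequence to collapse at $E_1$, which is precisely equivariant formality.
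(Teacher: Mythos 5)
The paper does not prove this theorem: it cites Ginzburg and Kirwan and gives no argument. So you are supplying a proof from scratch, and your outline is the standard Kirwan one (moment map component is Morse--Bott, critical set $= M^T$, equivariant perfection, conclude collapse). Two points, however, need fixing.

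The displayed identity
\begin{equation*}
P_T(M;t) \;=\; \sum_{F\subset M^T} t^{\lambda_F}\, P_T(F;t) \;=\; P(M^T;t)\cdot P(BT;t)
\end{equation*}
is wrong at the second equality. Since $P_T(F;t)=P(F;t)\cdot P(BT;t)$, the middle term equals $\bigl(\sum_F t^{\lambda_F}P(F;t)\bigr)\cdot P(BT;t)$, and this equals $P(M^T;t)\cdot P(BT;t)$ only when every $\lambda_F=0$. Already for $\C P^1$ with a rotation action one has $\sum_F t^{\lambda_F}P(F;t)=1+t^2$, whereas $P(M^T;t)=2$. The quantity you actually need is $\sum_F t^{\lambda_F}P(F;t)=P(M;t)$, i.e.\ ordinary perfection of $\varphi$, and that is not automatic. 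In Kirwan's argument it falls out of a squeeze: the Morse--Bott inequalities give $P(M;t)\leqslant\sum_F t^{\lambda_F}P(F;t)$ coefficientwise; the Cartan spectral sequence gives $P_T(M;t)\leqslant P(M;t)\cdot P(BT;t)$ coefficientwise; and equivariant perfection gives $P_T(M;t)=\bigl(\sum_F t^{\lambda_F}P(F;t)\bigr)P(BT;t)$. Chaining these forces equality throughout, and the equality $P_T(M;t)=P(M;t)\cdot P(BT;t)$ is exactly collapse at $E_1$. Your final sentence asserts the collapse without the squeeze, and with the wrong intermediate series; as written the conclusion does not follow.

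Second, you attribute equivariant perfection to ``all even indices,'' but that is not the mechanism. Perfection in Atiyah--Bott/Kirwan comes from the equivariant Euler class $e_T(\nu_F^-)\in H_T^*(F)\cong H^*(F)\otimes\S(\gd)$ of the negative normal bundle being a non-zero-divisor, which holds because $T$ fixes $F$ pointwise and acts on $\nu_F^-$ with no trivial subrepresentation; that is what splits the Thom--Gysin sequences. Your Hessian computation showing the index is even is correct and a genuine feature of the symplectic setting, but it is a by-product of the weight decomposition rather than the engine of the perfection argument, so calling it the main technical step misplaces the emphasis.
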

We will see in Section~\ref{sec:assign_hamiltonian} that $\Phi$ is 
called a \emph{moment map}, so this condition is
equivalent to requiring the action to be \emph{Hamiltonian}.

Equivariantly formal spaces are particularly important because
the restriction map
\begin{equation}\label{eq:i*}
i^*\colon H_T^*(M) \to H_T^*(M^T)
\end{equation}
has a very beautiful property.
We first recall the \emph{Abstract Localization Theorem} (see \cite{AB} and \cite{GS}).
\begin{thm}\label{th:absloc}
Let $M$ be a compact $T$-manifold. Then the kernel of the mapping \eqref{eq:i*} is the module of torsion
elements in $H^*_T(M)$. 
\end{thm}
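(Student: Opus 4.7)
The plan is to prove the two containments separately. The easy inclusion --- that torsion lies in $\ker(i^*)$ --- follows because $T$ acts trivially on $M^T$, so $H_T^*(M^T)\cong H^*(M^T)\otimes\S(\ft^*)$ is a free, hence torsion-free, $\S(\ft^*)$-module; any $\S(\ft^*)$-torsion element of $H_T^*(M)$ must therefore vanish in $H_T^*(M^T)$.

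For the converse, the main ingredient is the following lemma: if a $T$-manifold $N$ is fixed-point-free and has compact orbit space, then $H_T^*(N)$ is a torsion $\S(\ft^*)$-module. Granting the lemma, choose a $T$-equivariant tubular neighborhood $U$ of $M^T$ that retracts equivariantly onto $M^T$, together with a $T$-invariant open set $V$ such that $U\cup V=M$, $\overline V\cap M^T=\emptyset$, and $\overline V$ is compact. Both $V$ and $U\cap V$ are fixed-point-free with compact orbit space, so the lemma gives that $H_T^*(V)$ and $H_T^*(U\cap V)$ are torsion. Given $\alpha\in\ker(i^*)$, its image in $H_T^*(U)\cong H_T^*(M^T)$ is zero, and the Mayer--Vietoris sequence
$$
\cdots\to H_T^{k-1}(U\cap V)\to H_T^k(M)\to H_T^k(U)\oplus H_T^k(V)\to\cdots
$$
sends $\alpha$ to $(0,\alpha|_V)$. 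Choose $f\in\S(\ft^*)\setminus\{0\}$ annihilating $\alpha|_V$; then $f\alpha$ maps to $(0,0)$ and so lifts to some $\beta\in H_T^{k-1}(U\cap V)$. Choose $g\in\S(\ft^*)\setminus\{0\}$ annihilating $\beta$; then $gf\alpha=0$ in $H_T^*(M)$, and $\alpha$ is torsion.

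It remains to sketch the lemma, which is the main obstacle. Using the slice theorem and the compactness of $N/T$, cover $N$ by finitely many $T$-invariant tubes $W_1,\dots,W_m$, each of which equivariantly deformation retracts onto an orbit $T/T_{p_i}$ with $T_{p_i}\subsetneq T$. For each $i$, $H_T^*(W_i)\cong H_T^*(T/T_{p_i})\cong H^*(BT_{p_i})$, and the $\S(\ft^*)$-action on this factors through the pullback along $BT_{p_i}\to BT$. Since $\ft_{p_i}$ is a proper subspace of $\ft$, any nonzero $\xi_i\in\ft^*$ that vanishes on $\ft_{p_i}$ annihilates $H_T^*(W_i)$. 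A Mayer--Vietoris induction on $m$ then exhibits the nonzero element $\xi_1\cdots\xi_m\in\S(\ft^*)$ as an annihilator of $H_T^*(N)$, so $H_T^*(N)$ is torsion. The trickiest point here is lining up the slice-theorem cover with the requirement that each tube's equivariant cohomology be killed by a single linear form; once this is in place, the two Mayer--Vietoris arguments assemble into the theorem.
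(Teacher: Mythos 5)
The paper does not actually prove Theorem~\ref{th:absloc}: it is stated as a recollection with pointers to \cite{AB} and \cite{GS}, so there is no internal proof to compare against. Your argument is a correct and standard route to the Abstract Localization Theorem (it is essentially the tubular-neighborhood-plus-Mayer--Vietoris proof one finds in Quillen, tom Dieck, or Guillemin--Sternberg): the easy inclusion via freeness of $H_T^*(M^T)\cong H^*(M^T)\otimes\S(\ft^*)$, the reduction of the hard inclusion to a torsion lemma for fixed-point-free pieces, and the slice-theorem/M--V induction for that lemma are all sound. Two small points deserve tightening. First, the hypothesis of your lemma, that the fixed-point-free piece has compact orbit space, is not literally satisfied by $V$ or $U\cap V$, since these are open; what you actually use is that $\overline V$ is compact and disjoint from $M^T$, so you may cover $\overline V$ by finitely many $T$-tubes $W_i$ avoiding $M^T$ and then run the induction on the cover of $V$ (or $U\cap V$) by the sets $W_i\cap V$; the annihilator $\xi_i$ for $W_i$ still kills $H_T^*(W_i\cap V)$ because the $\S(\ft^*)$-module structure factors through $H_T^*(W_i)$. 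Second, the M--V induction does not quite produce $\xi_1\cdots\xi_m$ as the annihilator: because the connecting map contributes an extra factor at each step, what you get is a product with repetitions, something like $(\xi_1\cdots\xi_{m-1})^2\xi_m$ at the last stage. This is immaterial -- all you need is some nonzero element of $\S(\ft^*)$ killing $H_T^*(N)$ -- but the statement as written overclaims the exact form of the annihilator. With those two adjustments the proof is complete.
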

As we observed before, if $M$ is equivariantly formal then $H_T^*(M)$ is a free $\S(\gd)$-module.
So Theorem \ref{th:absloc} has the following important consequence.
\begin{thm}\label{th:inj}
Let $M$ be a compact equivariantly formal $T$ space. Then
the restriction map \eqref{eq:i*} is injective.
\end{thm}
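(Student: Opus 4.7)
The plan is to deduce Theorem \ref{th:inj} immediately by combining the two preceding results, namely Proposition \ref{free} and the Abstract Localization Theorem (Theorem \ref{th:absloc}). The bridge between them is the elementary observation that, since $\S(\gd)$ is a polynomial ring over $\R$ and hence an integral domain, every free $\S(\gd)$-module is torsion-free.

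More precisely, I would proceed as follows. First, assuming $M$ is equivariantly formal, invoke Proposition \ref{free} to conclude that $H_T^*(M)$ is a free $\S(\gd)$-module. Second, note that the torsion submodule of a free module over the integral domain $\S(\gd)$ is zero: if $f\cdot m=0$ for some nonzero $f\in\S(\gd)$ and $m$ in a free module with basis $\{e_\lambda\}$, expanding $m=\sum c_\lambda e_\lambda$ and comparing coefficients forces every $c_\lambda=0$. Third, by Theorem \ref{th:absloc}, the kernel of the restriction map $i^*\colon H_T^*(M)\to H_T^*(M^T)$ is exactly this torsion submodule, which we have just shown is zero. Hence $i^*$ is injective.

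There is essentially no obstacle here; the theorem is a two-line corollary of results already proved in the paper, and the only auxiliary fact needed, namely that free modules over an integral domain are torsion-free, is standard commutative algebra. The substantive content has been loaded into the earlier statements (Ginzburg--Kirwan's Theorem \ref{th:KG} and the Abstract Localization Theorem \ref{th:absloc}), so the proof here is purely formal.
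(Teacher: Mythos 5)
Your proof is correct and is exactly the argument the paper intends: the theorem is stated as an immediate consequence of Proposition \ref{free} and the Abstract Localization Theorem, with the torsion-free property of free modules over the integral domain $\S(\gd)$ as the implicit bridge. Your spelled-out version matches the paper's reasoning.
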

In particular we have that $M^T$ is non-empty.
By combining Theorem \ref{th:KG} with Theorem \ref{th:inj} we have the following result.
\begin{thm}\label{th:KIT}
If $(M,\omega)$ is a compact symplectic manifold with a Hamiltonian
$T$-action the map \eqref{eq:i*} is injective.
\end{thm}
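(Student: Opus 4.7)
The plan is to chain together the two previous theorems stated immediately above, since Theorem \ref{th:KIT} is essentially their composition. First I would recall that a Hamiltonian $T$-action on $(M,\omega)$ is, by definition, one that admits a moment map $\Phi$; as indicated in the paragraph following Theorem \ref{th:KG}, this is precisely the condition that $\omega$ admits a closed equivariant extension $\omega - \Phi \in \Omega_T^2(M)$ with $d_T(\omega - \Phi) = 0$.

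Once the Hamiltonian hypothesis has been translated into the existence of such a closed equivariant extension of $\omega$, I would invoke Theorem \ref{th:KG} of Ginzburg--Kirwan to conclude that $M$ is equivariantly formal. At this point the symplectic and Hamiltonian structures have done their work and can be forgotten: all that remains is a compact equivariantly formal $T$-manifold.

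Next I would apply Theorem \ref{th:inj}, which asserts that for any compact equivariantly formal $T$-space the restriction map $i^*\colon H_T^*(M)\to H_T^*(M^T)$ is injective. Composing these two implications yields the claim. As a bonus, since injectivity forces the codomain to be nonzero whenever $H^0_T(M)\neq 0$, we recover the well-known fact that $M^T$ is nonempty under these hypotheses.

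There is no real obstacle here — the statement is packaged as a corollary of the two named theorems, and the only substantive observation is the identification of ``Hamiltonian'' with ``$\omega$ has a $d_T$-closed equivariant extension,'' which is foreshadowed by the sentence preceding the theorem and will be justified in Section~\ref{sec:assign_hamiltonian}. Consequently the proof is a one-line citation of Theorem \ref{th:KG} followed by Theorem \ref{th:inj}.
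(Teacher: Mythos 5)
Your proposal matches the paper exactly: the authors state the theorem as an immediate consequence of combining Theorem~\ref{th:KG} (Hamiltonian implies equivariantly formal) with Theorem~\ref{th:inj} (equivariant formality implies injectivity of $i^*$), which is precisely your chain of citations. Nothing further is needed.
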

This result is known as the Kirwan Injectivity Theorem and was proved by
Kirwan in \cite{Ki} using Morse-theoretic techniques.

\begin{prop}\label{even injective}
 If $M$ is equivariantly formal and $M^T$ is zero-dimensional, 
then the canonical map $\gamma \colon H_T^{even}(M) \to \cA_T(M)$ is injective.
\end{prop}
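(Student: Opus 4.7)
The plan is to factor the Kirwan-type injectivity map $i^\ast$ through $\gamma$ and then invoke Theorem \ref{th:inj}.

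First I would unpack what $\gamma([\phi])$ does on the minimal (zero-degree) strata. Since $M^T$ is zero-dimensional and $M$ is compact, $M^T$ is finite, and each point $p \in M^T$ is its own stratum $\{p\}$ with $\ft_{\{p\}} = \ft$. For a $d_T$-closed cochain $\phi = \phi_0 + \phi_2 + \dotsb + \phi_{2k} \in \Omega_T^{2k}(M)$, the component $\phi_{2i}$ with $i \geqslant 1$ is a positive-degree differential form and therefore pulls back to $0$ under $\iota_p \colon \{p\} \hookrightarrow M$. Consequently, the restriction map to fixed points in Cartan's model is
\begin{equation*}
i^\ast([\phi]) = \bigoplus_{p \in M^T} \iota_p^\ast \phi_0 \in \bigoplus_{p \in M^T} \S(\ft^\ast) = H_T^\ast(M^T).
\end{equation*}
On the other hand, definition \eqref{eq:Phi} gives $\gamma([\phi])(\{p\}) = \phi_0|_{\ft_{\{p\}}} = \phi_0(\cdot)(p) = \iota_p^\ast \phi_0$, so the two evaluations agree.

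Next I would assemble this into a commutative diagram. Let $\rho \colon \cA_T(M) \to \Arktm{0}{}{T}{M} \cong \bigoplus_{p \in M^T} \S(\ft^\ast)$ be the restriction of an assignment to the zero-degree strata; by the identification above, $\rho \circ \gamma = i^\ast$ on $H_T^{\mathrm{even}}(M)$, where we use the natural identification $H_T^\ast(M^T) = \bigoplus_p \S(\ft^\ast)$, which is concentrated in even degrees.

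Finally I would apply Theorem \ref{th:inj}: since $M$ is compact and equivariantly formal, $i^\ast \colon H_T^\ast(M) \to H_T^\ast(M^T)$ is injective. A fortiori its restriction to $H_T^{\mathrm{even}}(M)$ is injective. Since $\rho \circ \gamma = i^\ast$ on this subspace, $\gamma$ itself must be injective on $H_T^{\mathrm{even}}(M)$, completing the proof.

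There is no real obstacle here; the only point to be careful about is the identification of $\gamma$ with $i^\ast$ on fixed-point strata, which rests on the simple observation that positive-degree forms vanish when restricted to a point. Proposition \ref{free} is not needed directly, but it is morally what makes Theorem \ref{th:inj} available.
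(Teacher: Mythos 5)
Your proof is correct and follows essentially the same route as the paper: factor the fixed-point restriction $i_{M^T}^*$ as $r_T\circ\gamma$ (the paper's $r_T$ is your $\rho$) and invoke injectivity of $i_{M^T}^*$ for compact equivariantly formal $T$-spaces. You cite Theorem~\ref{th:inj} directly where the paper invokes the Kirwan Injectivity Theorem by name, but these amount to the same fact here, and your explicit verification that $\gamma([\phi])(\{p\})=\iota_p^*\phi_0$ is just a slightly more detailed spelling-out of the paper's identity $(i_{M^T}^*\phi)(p)=\phi_0(p)=\gamma(\phi)(\{p\})$.
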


\begin{proof} Let $i_{M^T} \colon M^T \to M$ be the inclusion of the fixed point set $M^T$ into $M$ and
$$i_{M^T}^* \colon H_T^*(M) \to H_T^*(M^T) = \maps(M^T, \S(\ft^*))$$
the induced pull-back map. The connected components of $M^{\ft}=M^T$ are the 
fixed points, hence we have a natural map $r_T \colon \cA_T(M) \to \maps(M^T, \S(\ft^*))$, restricting a polynomial assignment 
to the fixed points.

If $\phi \in H_T^*(M)$, then, 
$$(i_{M^T}^*(\phi))(p)= \phi_0(p) = \gamma(\phi)(\{p\}) \; ,$$
showing $i_{M^T}^* = r_T \circ \gamma$,  hence that the following diagram commutes
$$
\begin{CD}
H_T^{even}(M) @>\gamma >> \cA_T(M) \\
@Vi_{M^T}^*VV @VV r_T V \\
H_T^*(M^T) @= \maps(M^T, \S(\ft^*))
\end{CD} \; .
$$
Since $M$ is equivariantly formal, the Kirwan Injectivity Theorem implies that $i_{M^T}^*$ is injective, and therefore the canonical map $\gamma$ is injective as well.
\end{proof}

Note that, under the above assumptions, the odd cohomology vanishes.

\begin{prop}\label{odd cohom}
If $M$ is equivariantly formal and $M^T$ is zero-dimensional, 
then $H_T^{odd}(M)=0$.
\end{prop}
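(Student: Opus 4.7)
The plan is to deduce the vanishing of odd-degree equivariant cohomology directly from the Kirwan Injectivity Theorem (Theorem~\ref{th:inj}) together with the observation that, for a zero-dimensional fixed point set, $H_T^*(M^T)$ is concentrated entirely in even degrees.

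First I would unpack $H_T^*(M^T)$. Since $M$ is compact and $M^T$ is zero-dimensional, $M^T$ is a finite collection of isolated fixed points, and the $T$-action on each of them is trivial. Therefore
$$H_T^*(M^T) = \maps(M^T, \S(\ft^*)) = \bigoplus_{p \in M^T} \S(\ft^*).$$
Under the grading convention of the paper (elements of $\ft^*$ have degree two), the symmetric algebra $\S(\ft^*)$ lives only in even degrees, so $H_T^{odd}(M^T) = 0$.

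Next I would invoke Theorem~\ref{th:inj}: because $M$ is compact and equivariantly formal, the restriction map
$$i^* \colon H_T^*(M) \to H_T^*(M^T)$$
is injective. This map is grade-preserving, so its restriction to the odd part gives an injection $H_T^{odd}(M) \hookrightarrow H_T^{odd}(M^T) = 0$, which forces $H_T^{odd}(M) = 0$.

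The argument is essentially immediate once the two ingredients (Kirwan injectivity and the even-degree concentration of $\S(\ft^*)$) are in place, so there is no substantive obstacle; the only point that deserves care is making sure we are entitled to Theorem~\ref{th:inj}, which requires equivariant formality but not the stronger Hamiltonian hypothesis. An alternative route, which I would mention but not follow, is to use the $\S(\ft^*)$-module isomorphism \eqref{eq:ief} to write $H_T^{odd}(M) \simeq H^{odd}(M) \otimes \S(\ft^*)$ and then argue that $H^{odd}(M) = 0$ by comparing Betti numbers with $|M^T|$; however, this uses more machinery than necessary, so the short proof via $i^*$ is preferable.
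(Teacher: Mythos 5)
Your proof is correct and follows exactly the paper's argument: observe that $H_T^{odd}(M^T)=0$ because $M^T$ is finite and $\S(\ft^*)$ is concentrated in even degrees, then conclude from the injectivity of the grade-preserving map $i_{M^T}^*$ (Theorem~\ref{th:inj}). You simply spell out the intermediate steps in slightly more detail than the paper does.
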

\begin{proof}
Since $M^T$ is zero dimensional, $H_T^{odd}(M^T)=0$. The result follows from the fact that
the grade preserving map
$i_{M^T}^*$ is injective.
\end{proof}
By combining Propositions \ref{even injective} and \ref{odd cohom} we obtain the following result.
\begin{thm}\label{isom ef}
If $M$ is equivariantly formal and $M^T$ is zero-dimensional, then the equivariant cohomology ring $H_T^*(M)$ is isomorphic to the
ring of cohomological assignments $\mathcal{H}_T(M)$.
\end{thm}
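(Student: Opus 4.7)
The plan is to observe that the theorem is essentially a direct consequence of the two propositions immediately preceding it, together with the definition of $\mathcal{H}_T(M)$ as the image of $\gamma$, so there is no substantive new work to do beyond assembling these pieces.

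First I would note that $\gamma\colon H_T^{even}(M)\to \cA_T(M)$ is a graded ring homomorphism whose image is, by definition, $\img$; hence $\gamma$ corestricts to a surjective graded ring homomorphism $H_T^{even}(M)\to \img$. Under the hypotheses of the theorem (equivariant formality and $M^T$ zero-dimensional), Proposition \ref{even injective} states that $\gamma$ is injective on $H_T^{even}(M)$. Combining these, $\gamma$ furnishes an isomorphism of graded rings between $H_T^{even}(M)$ and $\img$.

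To upgrade this to an isomorphism involving the full equivariant cohomology ring $H_T^*(M)$, I would invoke Proposition \ref{odd cohom}, which under the same hypotheses gives $H_T^{odd}(M)=0$. Therefore $H_T^*(M)=H_T^{even}(M)$, and the map $\gamma$ itself (rather than its restriction to the even part) is a graded ring isomorphism onto $\img$.

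The only step that could be called an obstacle is a bookkeeping one: checking that the identification $H_T^*(M)=H_T^{even}(M)$ respects the ring structure, but this is immediate since $H_T^{odd}(M)$ being zero means the decomposition $H_T^*(M)=H_T^{even}(M)\oplus H_T^{odd}(M)$ reduces to a single summand. With that remark, the theorem follows by concatenating the isomorphisms $H_T^*(M)=H_T^{even}(M)\xrightarrow{\gamma}\img$.
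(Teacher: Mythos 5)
Your proposal is correct and is essentially identical to the paper's own argument: the paper explicitly says the theorem follows ``by combining Propositions \ref{even injective} and \ref{odd cohom},'' which is exactly the assembly you carry out. No further comment is needed.
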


\begin{defn}
The \emph{\quotmod{} module} is the quotient module $\cA_T(M)/\img$.
\end{defn}

In general the \quotmod{} module is not trivial, as shown by the next example. (See also the examples in \cite[Section 4]{GH}.) 

\begin{exm}\label{ex:not-cohomological}
The assignment $f$ in Example~\ref{ex:cp1sq} is not a \spass: if $f = \gamma(\phi)$, 
then by the Atiyah-Bott Berline-Vergne localization formula,
$$\int_M \phi = \frac{f_1}{\alpha^2} - \frac{f_2}{\alpha^2} - \frac{f_3}{\alpha^2} + \frac{f_4}{\alpha^2} = \frac{2}{\alpha}\; .$$
But the integral must be a polynomial in $\S(\ft^*)$, and $2/\alpha$ obviously isn't. This shows that the polynomial assignment $f$ is not in the image of $\gamma$, hence $\gamma$ is not surjective.
\end{exm}

As we have already observed, equivariant formality implies that $M^T\neq \emptyset$. Moreover we have the following.
\begin{lem}\label{minimal strata}
Let $M$ be an equivariantly formal $T$-space. Then the minimal strata, i.e.\ the strata which are
minimal under the $\preccurlyeq$ ordering, coincide with the fixed point set $M^T$.
\end{lem}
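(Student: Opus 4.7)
The plan is to show both inclusions. Every $T$-fixed point $\{p\}$ is minimal under $\preccurlyeq$: if $Y\preccurlyeq \{p\}$, then $Y\subseteq \overline{\{p\}}=\{p\}$, forcing $Y=\{p\}$; this direction does not use equivariant formality. For the converse, I would argue by contradiction: suppose $X$ is a minimal stratum with $\ft_X\subsetneq \ft$, and let $Z$ be the connected component of $M^{T_X}$ containing $X$. Then $Z$ is a compact $T$-invariant submanifold of $M$ on which $T_X$ acts trivially, $X$ is open in $Z$, and since the points of $Z$ with infinitesimal stabilizer strictly larger than $\ft_X$ form a closed subset of positive codimension, $X$ is also dense in $Z$.

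The first key step is to use the minimality of $X$ to force $Z=X$. Given any $p\in Z\setminus X$, let $Y$ be the stratum through $p$; then $\ft_Y=\ft_p\supsetneq \ft_X$, hence $T_Y\supseteq T_X$ and $Y\subseteq M^{T_Y}\subseteq M^{T_X}$. Because $Y$ is connected and meets the connected component $Z$ of $M^{T_X}$, the whole stratum $Y$ lies in $Z=\overline{X}$. By the definition of $\preccurlyeq$ this gives $Y\preccurlyeq X$ with $Y\neq X$, contradicting the minimality of $X$. Consequently $Z=X$: the stratum $X$ is itself compact, every point of $X$ has infinitesimal stabilizer exactly $\ft_X$, and the $T$-action on $X$ therefore factors through the nontrivial quotient torus $T/T_X$ and admits no $T$-fixed points.

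To close the argument I would appeal to the standard propagation of equivariant formality to isotropy submanifolds: if $M$ is equivariantly formal for $T$, then each connected component of $M^K$ is equivariantly formal for the induced $T/K$-action, for every subtorus $K\subseteq T$. Applied to $Z=X$ with $K=T_X$, this makes $X$ an equivariantly formal $(T/T_X)$-space, so Theorem~\ref{th:inj} (Kirwan injectivity for the torus $T/T_X$) yields $X^{T/T_X}=X\cap M^T\neq\emptyset$, contradicting the previous paragraph. Hence $\ft_X=\ft$, i.e.\ $X$ is a $T$-fixed point.

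The main obstacle is the propagation statement invoked at the end, i.e.\ the fact that freeness of $H_T^*(M)$ over $\S(\ft^*)$ is inherited by $H_T^*(Z)$ for each connected component $Z$ of $M^{T_X}$. This can be deduced from the Chang--Skjelbred description combined with the collapse of the Cartan spectral sequence, but a cleaner route that already suffices for the Hamiltonian $T$-spaces which motivate the paper is to invoke Theorem~\ref{th:KG}: the connected components of $M^{T_X}$ are then symplectic submanifolds carrying Hamiltonian $T/T_X$-actions, hence are equivariantly formal.
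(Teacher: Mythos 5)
Your proof is correct and ultimately rests on exactly the fact the paper uses: equivariant formality (via freeness of $H_T^*(M)$, Proposition~\ref{free}) forces every connected component of $M^{H}$, $H=\exp(\ft_X)$, to contain a $T$-fixed point. The paper cites this directly to \cite[Theorem 11.6.1]{GS}; what you flag as ``the main obstacle'' (propagation of formality/freeness to components of $M^{T_X}$, which then yields nonempty fixed sets via Theorem~\ref{th:inj}) is precisely the content of that reference, so your diagnosis of where the real work lies is accurate. The difference is that your argument takes a detour: you use minimality to force $Z=X$ and then derive a contradiction from $Z$ having no fixed points. This is valid but not needed --- once the cited fact produces a fixed point $p\in Z=\overline{X}$, the fixed-point stratum through $p$ lies in $\overline{X}$, is distinct from $X$, and is $\preccurlyeq X$, so any stratum $X$ with $\ft_X\subsetneq\ft$ is immediately non-minimal without proving $Z=X$. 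One small point of precision: in the easy direction you write ``every $T$-fixed point $\{p\}$,'' implicitly assuming isolated fixed points, whereas the lemma allows $M^T$ to have positive-dimensional components; the same one-line argument works with $\{p\}$ replaced by a connected component of $M^T$, which is closed and hence equals its own closure.
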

\begin{proof}
It is sufficient to prove that each stratum $X$ contains a fixed point. Let $\mathfrak{t}_X$ be the common infinitesimal stabilizer
of the points in $X$ and $H=\mathrm{exp}(\mathfrak{t}_X)\subset T$. 
By Proposition \ref{free} $H_T(M)$ is a free $\mathbb{S}(\mathfrak{t}^*)$-module, and
the conclusion follows from \cite[Theorem 11.6.1]{GS}.
\end{proof}

In the rest of this paper we study the image of the canonical map $\gamma$: 
we give conditions that an assignment must satisfy in order to be a cohomological 
assignment, and cases when all assignments are cohomological. We show that, 
under suitable conditions, the \quotmod{} module is a torsion module over $\S(\ft^*)$ 
(see Corollary \ref{defect torsion}).

\begin{exm}[{\em Delta classes}]
\label{delta}
Let $M$ be an equivariantly formal $T$-space with isolated fixed points and $p\in M^T$.
Since $p$ is the only fixed point of the isotropy action of $T$ on $T_pM$, this vector
space can be endowed with a complex structure with respect to which
the weights of the complex representation are given by $\alpha_1,\ldots,\alpha_n\in \mathfrak{t}^*$.
Note that this complex structure is not unique, hence the signs of the weights are not well-defined.
However the content of this example will not be affected by this ambiguity.
Let $\{\beta_1,\ldots,\beta_k\}\subset \mathfrak{t}^*$ be a choice of pair-wise independent vectors such that 
for every $i=1,\ldots,n$, there exist $j\in \{1,\ldots,k\}$ and $\lambda_i\in \R\setminus \{0\}$ with $\alpha_i=\lambda_i\beta_j$.
Define the \emph{delta class at $p$} to be the map $\delta_p\colon M^T\to \mathbb{S}(\mathfrak{t}^*)$, where
$$
\delta_p(q)=
\begin{cases}
\displaystyle\prod_{j=1}^k\beta_j& \mbox{if}\;\;p=q\\
& \\
0 & \mbox{otherwise}
\end{cases}\;.
$$
\begin{prop}\label{delta class}
For each $p\in M^T$ the delta class $\delta_p$ defines a polynomial assignment. 
\end{prop}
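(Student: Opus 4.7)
The plan is to invoke the general principle recorded just before Example~\ref{ex:cp1}: a compatible family of values on the minimal strata extends uniquely to a polynomial assignment on all of $\cP_T(M)$. Equivariant formality of $M$, combined with Lemma~\ref{minimal strata}, implies that the minimal strata are exactly the fixed points in $M^T$, so $\delta_p$ already lives on the right domain. What must be checked is the single compatibility identity
$$\pi_Z^{q_1}(\delta_p(q_1)) = \pi_Z^{q_2}(\delta_p(q_2))$$
for every stratum $Z$ and every pair of fixed points $q_1, q_2 \preccurlyeq Z$. Since $\delta_p(q) = 0$ for $q \neq p$, this is automatic unless exactly one of $q_1, q_2$ equals $p$, so, taking $q_1 = p$ and $q_2 \neq p$, the problem reduces to proving that $\prod_{j=1}^{k} \beta_j$ restricts to zero on $\ft_Z$.

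To produce this vanishing I would argue geometrically. First I would use the description recalled in Section~\ref{sec:assignments}---that $Z$ is open and dense in a connected component $N$ of the fixed submanifold $M^{T_Z}$---to identify $\overline{Z}$ with $N$. Since $N$ is then a connected smooth submanifold of $M$ containing both $p$ and $q_2$, it has positive dimension, and the tangent space $T_p N$ is a nonzero subspace of $(T_p M)^{T_Z}$. Decomposing $T_p M$ into weight spaces of the isotropy representation with weights $\alpha_1, \dots, \alpha_n \in \ft^*$, the $T_Z$-fixed part is precisely the sum of those weight spaces whose weight restricts trivially to $\ft_Z$. Consequently at least one weight $\alpha_i$ satisfies $\alpha_i|_{\ft_Z} = 0$.

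With that in hand the conclusion is immediate: by the defining property of the $\beta_j$'s, this $\alpha_i$ equals $\lambda_i \beta_{j(i)}$ with $\lambda_i \neq 0$, so $\beta_{j(i)}|_{\ft_Z} = 0$ and hence the full product $\prod_{j=1}^{k} \beta_j$ restricts to zero on $\ft_Z$, as required. The step that deserves a moment of care---and is really the only place where topology enters---is the identification $T_p N = (T_p M)^{T_Z}$; I would justify this by the standard linearization of a compact torus action at a fixed point. Once this is granted, the rest is a short algebraic combination of Lemma~\ref{minimal strata} and the assumption on the $\beta_j$'s, and I do not foresee any further obstacle.
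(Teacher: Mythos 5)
Your proposal is correct and follows essentially the same strategy as the paper's proof: reduce to the compatibility condition between pairs of fixed points in the closure of a stratum, observe that the only nontrivial case is when one of them is $p$, and then use the linearization of the $T$-action at $p$ (via the equivariant exponential map) to find a weight $\alpha_i$ that vanishes on $\ft_Z$, which forces the corresponding $\beta_{j(i)}$ and hence the product $\prod\beta_j$ to vanish there. The only cosmetic difference is that the paper restricts its check to degree-one strata and asserts $\ft_X = \ker(\beta_j)$ exactly, whereas you verify the weaker containment $\ft_Z \subseteq \ker(\beta_{j(i)})$ directly for an arbitrary stratum, which sidesteps the implicit reduction to degree one; both yield the same conclusion.
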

\begin{proof}
Observe that, by Lemma \ref{minimal strata}, $M^T$ coincides with the minimal strata. In order to see that
$\delta_p$ defines a polynomial assignment, it is sufficient to prove that for each stratum $X\in\mathcal{P}_T^1(M)$
and each $q_1,q_2\in \overline{X}\cap M^T$ we have $\delta_p(q_1)|_{\mathfrak{t}_X}=\delta_p(q_2)|_{\mathfrak{t}_X}=0$.
If $p\notin \overline{X}\cap M^T$ or if $p\neq q_i$ for $i=1,2$,  this follows from the definition of $\delta_p$.
So suppose that $p\in \overline{X}\cap M^T$ and $p=q_1$. 
Since the exponential map $\mathrm{exp}\colon T_pM\to M$ intertwines the $T$-action on $T_pM$ and $M$,
it's easy to see that there exists $j\in\{1,\ldots,k\}$
such that $\mathfrak{t}_X=\mathrm{ker}(\beta_j)$, hence $(\prod_{j=1}^k\beta_j)|_{\mathfrak{t}_X}=0$
and the conclusion follows. 
 
\end{proof}
\end{exm}

\section{Assignments and GKM Spaces}
\label{sec:GKM_assign}

When $M$ is an equivariantly formal compact manifold, the minimal strata are the connected components of $M^T$. Therefore an assignment is determined by fixed point data, subject to compatibility conditions, hence the restriction
$$r_T \colon \cA_T(M) \to \maps(M^T, \S(\ft^*))$$
is injective. If the fixed points are isolated, then $\gamma$ is an isomorphism if and only if $i_{M^T}^*$ and $r_T$ have the same image in $\maps(M^T, \S(\ft^*))$. Since $\im(i_{M^T}^*) \subseteq \im(r_T)$, that means that the canonical map $\gamma$ is an isomorphism if and only if $\im(r_T) \subseteq \im(i_{M^T}^*)$, or, in other words, if and only if the compatibility conditions imposed on fixed point data are sufficient to guarantee that the polynomial assignment comes from an equivariant class.

Let $M$ be
a compact equivariantly formal $T$-manifold, 
$K_1,\ldots, K_N$ the family of codimension one subtori of $T$ which occur as
isotropy groups of $M$, and $M^{K_j}$ the subset of points of
$M$ fixed by $K_j$, for every $j=1,\ldots,N$. Since $M, M^T$ and the $M^{K_j}$s are $T$-invariant spaces, 
the commutative diagram given by $T$-invariant inclusions 
$$\xymatrix{M^T \ar[rr]^{i_{M^T}} \ar[dr]_{i_{K_j}} & & M \\
& M^{K_j} \ar[ur] & }$$
induces a commutative diagram in equivariant cohomology
$$\xymatrix{H_T^*(M) \ar[rr]^{i_{M^T}^*} \ar[dr] & & H_T^*(M^T) \\
& H_T^*(M^{K_j}) \ar[ur]_{i_{K_j}^*} & }.$$
Hence
 \begin{equation*}
i_{M^T}^*(H_T^*(M))\subseteq \bigcap_{j=1}^N i^*_{K_j}(H_T^*(M^{K_j}))\;.
\end{equation*}
The opposite inclusion is a well-known theorem by Chang and Skjelbred (cf.
\cite[Lemma 2.3]{CS}). 
\begin{thm}[Chang-Skjelbred]\label{th:CS_theorem}
Let $M$ be a compact and equivariantly formal $T$-manifold. 
Let $K_1\ldots,K_N$ be the subtori of $T$ of codimension one which occur as isotropy groups
of points of $M$. Then
\begin{equation}\label{eq:CS=}
i_{M^T}^*(H_T^*(M))= \bigcap_{j=1}^N i^*_{K_j}(H_T^*(M^{K_j}))\;.
\end{equation}
\end{thm}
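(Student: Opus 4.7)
The inclusion $\subseteq$ has already been obtained from the functorial commutative diagram above, so I need only prove $\supseteq$. My plan is to factor the argument through the one-skeleton $M_{(1)}=\bigcup_{j=1}^N M^{K_j}$: first I will show by a Mayer--Vietoris argument that
$$
\mathrm{Image}\bigl(H_T^*(M_{(1)})\to H_T^*(M^T)\bigr)=\bigcap_{j=1}^N i_{K_j}^*(H_T^*(M^{K_j})),
$$
and then I will use equivariant formality to show that this image already coincides with the image of $H_T^*(M)\to H_T^*(M^T)$.

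The first step rests on the geometric observation that the Lie algebras of two distinct codimension-one subtori $K_j\neq K_k$ together span $\ft$, so the subtorus they generate is all of $T$ and hence $M^{K_j}\cap M^{K_k}=M^T$. After replacing each $M^{K_j}$ with a $T$-invariant tubular neighborhood $U_j$ equivariantly retracting onto it, the pairwise intersections $U_j\cap U_k$ retract onto $M^T$, and the equivariant Mayer--Vietoris sequence for the cover $\{U_j\}$ gives exactness of
$$
H_T^*(M_{(1)})\longrightarrow \bigoplus_j H_T^*(M^{K_j})\xrightarrow{\;\partial\;}\bigoplus_{j<k}H_T^*(M^T),
$$
where $\partial$ sends $(y_j)$ to $(y_j|_{M^T}-y_k|_{M^T})_{j<k}$. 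Any element $x$ of the right-hand intersection is represented by a family $(y_j)$ in the kernel of $\partial$, which therefore lifts to a class on $M_{(1)}$ with restriction $x$ to $M^T$.

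For the second step I consider the long exact sequence of the pair $(M,M_{(1)})$. The relative cohomology $H_T^*(M,M_{(1)})$ is filtered by $M_{(1)}\subset M_{(2)}\subset\cdots\subset M_{(r)}=M$, and each successive quotient is described, via the equivariant Thom isomorphism, as the equivariant cohomology of a union of strata of degree $i\geqslant 2$, multiplied by the Euler class of the corresponding normal bundle, which is a nonzero product of weights spanning the $(\geqslant 2)$-dimensional space $\ft_X^\perp\subset\ft^*$. This forces $H_T^*(M,M_{(1)})$ to be a torsion $\mathbb{S}(\ft^*)$-module, so by Proposition~\ref{free} the map $H_T^*(M,M_{(1)})\to H_T^*(M)$ vanishes (torsion maps to zero in the torsion-free target), and $H_T^*(M)\hookrightarrow H_T^*(M_{(1)})$ is injective with torsion cokernel. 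To promote this into equality of images in $H_T^*(M^T)$, I will compare the connecting maps $\delta_M\colon H_T^k(M^T)\to H_T^{k+1}(M,M^T)$ and $\delta_{M_{(1)}}\colon H_T^k(M^T)\to H_T^{k+1}(M_{(1)},M^T)$; naturality gives $\ker\delta_M\subseteq\ker\delta_{M_{(1)}}$, and from the long exact sequence of the triple $(M,M_{(1)},M^T)$ the reverse inclusion reduces to showing the restriction $H_T^*(M,M^T)\to H_T^*(M_{(1)},M^T)$ is injective on $\mathrm{Im}(\delta_M)$, or equivalently that $H_T^*(M,M_{(1)})\to H_T^*(M,M^T)$ vanishes. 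This final vanishing --- a finer form of the same Thom/torsion analysis, and essentially the first-term exactness of the Atiyah--Bredon sequence --- is the step I expect to be the main obstacle.
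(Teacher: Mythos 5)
The paper does not prove this theorem: it establishes only the easy inclusion $\subseteq$ via the commutative diagram and then cites \cite[Lemma 2.3]{CS} for the converse. So there is no ``paper proof'' to match; I can only assess whether your argument closes the gap.

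Your Step 1 is sound (and can be made cleaner by excision: since $M^{K_j}\cap M^{K_k}\subseteq M^T$ for $j\neq k$, the set $M_{(1)}\setminus M^T$ is the disjoint union of the $M^{K_j}\setminus M^T$, whence $H_T^*(M_{(1)},M^T)\cong\bigoplus_j H_T^*(M^{K_j},M^T)$; one then reads off $\ker\delta_{M_{(1)}}=\bigcap_j\ker\delta_{M^{K_j}}=\bigcap_j i_{K_j}^*H_T^*(M^{K_j})$, avoiding any worries about Mayer--Vietoris for closed covers). The genuine gap is in Step~2, and you flag it yourself. The trouble is that the ``reduction'' you end with makes no progress: because $H_T^*(M)\to H_T^*(M^T)$ is injective (Theorem~\ref{th:inj}), the connecting map $\delta_M\colon H_T^*(M^T)\to H_T^{*+1}(M,M^T)$ is \emph{surjective}, so ``$H_T^*(M,M^T)\to H_T^*(M_{(1)},M^T)$ is injective on $\mathrm{Im}(\delta_M)$'' is literally the same statement as ``$H_T^*(M,M_{(1)})\to H_T^*(M,M^T)$ vanishes'', and both are, via the long exact sequence of the triple, precisely the equality $\mathrm{Im}(H_T^*(M)\to H_T^*(M^T))=\mathrm{Im}(H_T^*(M_{(1)})\to H_T^*(M^T))$ you are trying to prove. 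So the last paragraph restates the goal rather than reducing it.

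The torsion observation you make is also not enough on its own: $H_T^*(M,M^T)$ is itself a torsion module, so a torsion source mapping to a torsion target can certainly have nonzero image, and ``supported in codimension $\geq 2$'' must be combined with a genuine depth argument. The missing ingredient is roughly this: since both $H_T^*(M)$ and $H_T^*(M^T)$ are free $\S(\ft^*)$-modules and $i_{M^T}^*$ is injective, $H_T^*(M,M^T)\cong H_T^*(M^T)/i_{M^T}^*H_T^*(M)$ has projective dimension $\leq 1$, hence (Auslander--Buchsbaum) depth $\geq r-1$, hence every associated prime of $H_T^*(M,M^T)$ has height $\leq 1$. A nonzero submodule would inherit an associated prime of height $\leq 1$; but the image of $H_T^*(M,M_{(1)})$ is supported in codimension $\geq 2$ and so can only have associated primes of height $\geq 2$. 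This contradiction forces the image to vanish. Equivalently, one can run the original Chang--Skjelbred localization argument: $A=i_{M^T}^*H_T^*(M)$ is free, hence reflexive, hence $A=\bigcap_{\mathrm{ht}\,\mathfrak p=1}(A_{\mathfrak p}\cap H_T^*(M^T))$, and the Atiyah--Bott localization theorem identifies $A_{(\alpha_{K_j})}$ with $(i_{K_j}^*H_T^*(M^{K_j}))_{(\alpha_{K_j})}$ while showing $A_{\mathfrak q}=H_T^*(M^T)_{\mathfrak q}$ for every other height-one prime $\mathfrak q$. Either way, some input from commutative algebra beyond ``torsion'' is indispensable, and that input is exactly what your write-up leaves out.
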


\begin{remark}
By Proposition~\ref{free}, equivariant formality implies that $H_T^*(M)$ is a free $\S(\ft^*)$-module. As pointed out in \cite{AFP}, the Chang-Skjelbred Theorem is valid when the equivariant formality condition is replaced by the milder condition that $H_T^*(M)$ be a reflexive $\S(\ft^*)$-module.
\end{remark}

The spaces $M^{K}$ are acted effectively on only by the quotient 
circle $S^1\simeq T/K$; thus Theorem \ref{th:CS_theorem} asserts that in 
order to study $\im(i_{M^T}^*)$ it is necessary and sufficient to understand 
the $S^1$-equivariant cohomology of $M^{K}$. Indeed, if $\mathfrak{k}$ denotes 
the Lie algebra of $K$, we have
\begin{equation}\label{eq:equiv_cohom}
H_T^*(M^K)=H^*_{T/K}(M^K)\otimes \mathbb{S}(\mathfrak{k}^*)\;.
\end{equation}

This is particularly useful when the submanifolds $M^{K}$ are simple: 
that is the case when $M$ is a GKM space. Recall that a GKM space is a compact, equivariantly formal (oriented) $T$-manifold $M$, with isolated fixed points and such that the orbit type strata with stabilizer of codimension one are two-dimensional. If $M$ is such a space and $K$ is a codimension one subtorus of $T$, then the connected components of $M^K$ are either isolated fixed points or two-spheres that contain exactly two fixed points in $M^T$. 
It's easy to see that this condition is equivalent to requiring 
the weights of the isotropy action of $T$ on $T_pM$
be pair-wise linearly independent for each $p\in M^T$.
\begin{thm} \label{characterization}
Let $M$ be a $T$-equivariantly formal space with isolated fixed points $M^T$. Then
$$
H_T^*(M)\simeq \mathcal{A}_T(M)\;\mbox{ if and only if } M\mbox{ is GKM.}
$$
\end{thm}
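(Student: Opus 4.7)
Under the theorem's hypotheses, Lemma~\ref{minimal strata} identifies $M^T$ with the minimal strata, so the restriction $r_T\colon\cA_T(M)\to \maps(M^T,\S(\ft^*))$ is injective, while Theorem~\ref{isom ef} gives $H_T^*(M)\simeq \mathcal{H}_T(M)=\im(\gamma)$. The commutative square in the proof of Proposition~\ref{even injective} shows $i_{M^T}^*=r_T\circ\gamma$, so proving the theorem reduces to deciding when the two subrings $\im(r_T)$ and $\im(i_{M^T}^*)$ of $\maps(M^T,\S(\ft^*))$ coincide; the inclusion $\im(i_{M^T}^*)\subseteq\im(r_T)$ is automatic, so I only need to examine the reverse.

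For the direction GKM $\Rightarrow$ isomorphism, I would apply the Chang--Skjelbred theorem (Theorem~\ref{th:CS_theorem}) to write $\im(i_{M^T}^*)=\bigcap_K \im(i_K^*)$ over codimension-one isotropy subtori $K$. Under the GKM hypothesis each connected component of $M^K$ is either an isolated fixed point or a 2-sphere containing exactly two fixed points; combining the decomposition~\eqref{eq:equiv_cohom} with the $\C P^1$ calculation of Example~\ref{ex:cp1}, a 2-sphere component with fixed points $p,q$ and weight $\alpha$ contributes to $\im(i_K^*)$ precisely the condition $f_p-f_q\in\alpha\,\S(\ft^*)$, so $\im(i_{M^T}^*)$ equals the classical GKM ring. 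Now in the GKM case the degree-one strata of $M$ are exactly the open dense subsets of these same 2-spheres, and the assignment-compatibility condition for each degree-one stratum reads $f_p|_{\ker\alpha}=f_q|_{\ker\alpha}$, i.e.\ $f_p-f_q\in\alpha\,\S(\ft^*)$. Any tuple in $\im(r_T)$ satisfies in particular this degree-one compatibility, so $\im(r_T)$ is contained in the GKM ring $=\im(i_{M^T}^*)$, giving the required equality.

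For the converse, I would use the delta class of Example~\ref{delta} to exhibit an explicit non-cohomological assignment. If $M$ is not GKM, there exists $p\in M^T$ whose isotropy weights $\alpha_1,\dots,\alpha_n$ on $T_pM$ include a parallel pair, so the number $k$ of distinct directions $\beta_j$ satisfies $k<n$. Proposition~\ref{delta class} supplies $\delta_p\in\cA_T(M)$. If $\delta_p$ were cohomological, ABBV localization on the compact oriented manifold $M$ would yield
$$\int_M\gamma^{-1}(\delta_p)=\sum_{q\in M^T}\frac{\delta_p(q)}{\prod_i\alpha_{i,q}}=\frac{\prod_{j=1}^k\beta_j}{\prod_{i=1}^n\alpha_i}\in\S(\ft^*).$$
Writing $\prod_i\alpha_i$ as a nonzero scalar multiple of $\prod_j\beta_j^{m_j}$, the existence of some multiplicity $m_{j_0}\geq 2$ forces $\beta_{j_0}^{m_{j_0}-1}$ into the denominator of the right-hand side, so it is not a polynomial---contradiction. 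Hence $\delta_p\in\cA_T(M)\setminus\mathcal{H}_T(M)$ and $\gamma$ is not surjective.

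The GKM direction is essentially bookkeeping once Chang--Skjelbred and Example~\ref{ex:cp1} are in hand; the main potential nuisance is extending the circle-quotient computation of $\im(i_K^*)$ to the full torus via~\eqref{eq:equiv_cohom}, but this is a standard tensor-product identification. The converse is clean because the delta class was designed to automatically satisfy every codimension-one compatibility condition (Proposition~\ref{delta class}), so its failure to be cohomological in the non-GKM case is a pure ABBV obstruction visible exactly when some fixed point has non-pairwise-independent weights.
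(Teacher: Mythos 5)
Your proposal is correct and follows essentially the same route as the paper: the forward direction uses Chang--Skjelbred together with the $\C P^1$ computation of Example~\ref{ex:cp1} applied to the two-sphere components of each $M^K$, and the converse exhibits the delta class $\delta_p$ at a fixed point with parallel isotropy weights as a non-cohomological assignment via the ABBV obstruction. The only cosmetic difference is that you phrase the non-GKM condition directly in terms of a parallel pair of weights at $p$, whereas the paper first locates a component $N\subseteq M^K$ with $\dim N>2$ and then deduces the parallel weights; these are equivalent.
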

\begin{proof} {\em(If)}
We first prove that if $M$ is GKM then $H_T^*(M)\simeq \mathcal{A}_T(M)$.
By Theorem \ref{isom ef} we know that $H_T^*(M)\simeq \mathcal{H}_T(M)=\gamma(H_T^{even}(M))$.
Therefore we need to prove that the canonical map $\gamma\colon H_T^{even}(M)\to \mathcal{A}_T(M)$
is an isomorphism, and it suffices to show that
$\im(r_T) \subseteq \im(i_{M^T}^*)$. 

Let $f=r_T(\phi)\colon M^T \to \S(\ft^*)$; it suffices to show that $f \in i_{M^T}^*$, or, equivalently, that $f$ is the restriction to $M^T$ of an equivariant cohomology class on $M$. 

By the Chang-Skjelbred Theorem,
$$\im(i_{M^T}^*) = \bigcap_{K} i_K^* H_T^*(M^K)\; ,$$
where $i_K \colon M^K \to M^T$ is the inclusion map and the intersection is over  codimension one subtori $K \subset T$. Since $M$ is a GKM space, 
the connected components of such an $M^K$ are either fixed points or two spheres 
on which $T$ acts as in Section~\ref{sec:assignments}. 

Let $K$ be a codimension one subtorus of $T$, and let $\fk$ be the Lie algebra 
of $K$. Let $X$ be an infinitesimal orbit type stratus such that $\ft_X = \fk$. 
Then the closure of $X$ is a two-sphere on which $T$ acts by rotations. By the 
result in Section \ref{sec:assignments}
$$\cA_T(\bar{X}) = H_T^*(\bar{X})\; ,$$
and therefore 
$$\left. f\right|_{\bar{X}^T} \in i_K^* H_T^*(\bar{X}) \; .$$
Therefore
$$f \in  \bigcap_{K} i_K^* H_T^*(M^K) = \im(i_{M^T}^*) \; ,$$
hence $\im(r_T) \subseteq \im(i_{M^T}^*)$ and $\gamma$ is an isomorphism.\\

{\em (Only if)} Now we prove the converse, i.e.\ we prove that if $M$ is not GKM then $H_T^*(M)$ is not isomorphic to $\mathcal{A}_T(M)$.
Hence suppose that there exists a connected component $N$ of $M^K$ of dimension strictly greater than $2$,
where $K\subset T$ is a codimension one subtorus. Then, by \cite[Theorem 11.6.1]{GS},
$N$ contains a fixed point $p$. Let $\{\alpha_1,\ldots,\alpha_n\}$ be the weights of the isotropy representation
of $T$ on $T_pM$ and $\{\alpha_1,\ldots,\alpha_l\}$ the ones on $T_pN$.  
Since $\mathrm{exp}\colon T_pM\to M$ intertwines the $T$ actions, we have
$\alpha_i|_{\mathfrak{t}_N}=0$ for $i=1,\ldots,l$, and
since $\mathfrak{t}_N=Lie(K)$ is of codimension one in $\mathfrak{t}$ we have
$\alpha_i$ is proportional to $\alpha_j$ for every $i,j\in \{1,\ldots,l\}$.
Let $\delta_p$ be the delta class introduced in section \ref{delta}.
By the previous argument, the number of pair-wise independent vectors $\beta_i$s
defined in section \ref{delta} is strictly less than $n$, the total number of $\alpha_i$s. Hence 
\begin{equation}\label{delta not c}
\sum_{q\in M^T}\frac{\delta_p(q)}{e_M(q)}=\frac{\prod_{i=1}^k\beta_i}{\prod_{i=1}^n\alpha_i}\notin\mathbb{S}(\mathfrak{t}^*).
\end{equation}
So $\delta_p$ cannot be a cohomological assignment, since if it were, by the Atiyah-Bott-Berline-Vergne localization formula the
sum in \eqref{delta not c} would be in $\mathbb{S}(\mathfrak{t}^*)$.
\end{proof}

Therefore, we have that for
$T$-equivariantly formal spaces with isolated fixed points, the \quotmod{} module is trivial
if and only if $M$ is GKM.
By an argument similar to the proof of the {\em (Only if)} in Theorem \ref{characterization} we also have that
\begin{prop}
The delta classes $\delta_p$ defined in section \ref{delta} are cohomological if and only if $M$ is GKM.
\end{prop}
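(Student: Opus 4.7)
The plan is to exploit the fact that this proposition essentially amounts to repackaging two pieces of work already done. One direction is a direct corollary of Theorem \ref{characterization}, while the other direction reuses the computation that appeared in the \emph{(Only if)} part of that theorem's proof.

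For the \emph{(If)} direction, suppose $M$ is GKM. By Proposition \ref{delta class}, each $\delta_p$ already defines a polynomial assignment in $\cA_T(M)$. But Theorem \ref{characterization} tells us that when $M$ is GKM the canonical map $\gamma\colon H_T^{even}(M)\to \cA_T(M)$ is an isomorphism, so every polynomial assignment, and in particular each $\delta_p$, lies in $\img$, i.e.\ is cohomological. This direction requires essentially no new work.

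For the \emph{(Only if)} direction I would argue by contrapositive: assume $M$ is not GKM and produce a point $p\in M^T$ for which $\delta_p\notin \img$. Exactly as in the proof of Theorem \ref{characterization}, non-GKM means there is a codimension-one subtorus $K\subset T$ and a connected component $N$ of $M^K$ with $\dim N>2$. By \cite[Theorem 11.6.1]{GS}, $N$ contains a fixed point $p$. Let $\alpha_1,\ldots,\alpha_n$ be the isotropy weights of $T$ on $T_pM$ and $\alpha_1,\ldots,\alpha_l$ those on $T_pN$. Since $\ft_N=\mathrm{Lie}(K)$ has codimension one in $\ft$ and $\alpha_i|_{\ft_N}=0$ for $i\le l$, these first $l$ weights are mutually proportional, and the number $k$ of pair-wise independent directions $\beta_j$ from the definition of $\delta_p$ satisfies $k<n$.

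Now I would apply the Atiyah-Bott-Berline-Vergne localization formula. Since $\delta_p(q)=0$ for $q\ne p$, only the term at $p$ contributes, giving
$$
\int_M \delta_p \;=\; \frac{\prod_{j=1}^{k}\beta_j}{\prod_{i=1}^{n}\alpha_i}\,,
$$
and this is not a polynomial in $\S(\ft^*)$: after cancelling each $\beta_j$ against one proportional $\alpha_i$, strictly more than $k$ linear factors remain in the denominator (because $k<n$ and the $\alpha_i$'s are counted with multiplicity), so the expression has a genuine pole. Since any cohomological assignment has polynomial integral by ABBV, $\delta_p$ cannot lie in $\img$. This completes the contrapositive. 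The main subtlety, though routine, is verifying the denominator really does not cancel completely; this is guaranteed by $k<n$ together with the pair-wise independence of the $\beta_j$, and is exactly the point isolated in equation \eqref{delta not c} in the preceding theorem.
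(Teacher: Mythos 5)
Your proof is correct and follows essentially the same route the paper intends: the \emph{(If)} direction is a direct consequence of Theorem~\ref{characterization} together with Proposition~\ref{delta class}, and the \emph{(Only if)} direction reruns the computation in equation~\eqref{delta not c} from the \emph{(Only if)} half of Theorem~\ref{characterization}'s proof (the paper itself merely says ``by an argument similar to\ldots'' without writing it out). One small slip in your write-up of the cancellation: after pairing each $\beta_j$ with one proportional $\alpha_i$, what remains in the denominator is $n-k>0$ linear factors (not ``strictly more than $k$'' — that inequality would need $n>2k$, which is not guaranteed); the relevant point is simply that $n-k>0$ and the numerator is exhausted, so the ratio has a genuine pole and cannot be an element of $\S(\ft^*)$, which is exactly what \eqref{delta not c} records.
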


\section{Assignments and Hamiltonian Spaces}
\label{sec:assign_hamiltonian}

A Hamiltonian $T$-space $M$ is a symplectic manifold $(M, \omega)$ with a $T$-action that leaves the symplectic form $\omega$ 
invariant, \emph{i.e.} $\omega\in \Omega^2(M)^T$, and for which there exists 
a moment map:
a $T$-equivariant polynomial map of degree one $\Phi \colon \ft \to \Omega^0(M)$, satisfying the condition
$$d \Phi(\xi)= \iota(\xi_M)\omega$$
for all $\xi\in \ft$, where $\xi_M$ is the vector field on $M$ associated 
to $\xi\in \mathfrak{t}$. The existence of a moment map $\Phi$ is equivalent to 
the existence of a $d_T$ closed equivariant extension $\omega-\Phi\in \Omega_T^2(M)$ 
of $\omega$: this is often referred to as the \emph{equivariant symplectic form}.

Assume that the fixed point set $M^T$ is discrete. Fix a generic $\xi$ in $\mathfrak{t}$, \emph{i.e.}
$\langle\alpha, \xi \rangle\neq 0$ for each weight $\alpha\in \mathfrak{t}^*$ in the isotropy representations of $T$ on the tangent
space $T_pM$, for each fixed point $p\in M^T$.
It is well known that $\varphi=\Phi(\xi)\colon M\to \R$ is a perfect $T$-invariant Morse function, whose critical set
coincides with $M^T$. Moreover the Morse index at a fixed point $p$ is $2\lambda_p$, where $\lambda_p$
denotes the number of weights $\alpha$ in the isotropy representation on $T_pM$ such that $\langle \alpha , \xi \rangle <0$.
Let $\Lambda_p^-$ be the product of such weights, and $\Lambda_p$ the product of all the weights of the isotropy representation 
on $T_pM$, i.e. $\Lambda_p$ is the Euler class of $M$ at $p$, which we also denote by $e_M(p)$. 

In \cite{Ki}, Kirwan used the Morse stratification associated to $\varphi$ to prove that $i^*\colon H_T(M)\to H_T(M^T)$ is
\emph{injective} (cf. Theorem \ref{th:KIT}) and the restriction map to ordinary cohomology
$$r\colon H_T(M)\to H(M)$$ 
is \emph{surjective}. Moreover she gave a natural basis for $H^*_T(M)$.

\begin{lem}[Kirwan]\label{lem:Kc}

For every $p\in M^T$, there exists $\gamma_p\in H_T^{2\lambda_p}(M)$ such that
\begin{itemize}
\item[(i)] $\gamma_p(p)=\Lambda_p^-$;
\item[(ii)] $\gamma_p(q)=0$ for all $q\in M^T\setminus \{p\}$ such that $\varphi(q)\leqslant \varphi(p)$.
\end{itemize}
(Here $\gamma_p(q)$ denotes the restriction of $\gamma_p$ to the fixed 
point $q\in M^T$.) Moreover the classes $\{\gamma_p\}_{p\in M^T}$ form a 
basis of $H_T^*(M)$ as an $\S(\gd)$-module, and
$\{r(\gamma_p)\}_{p\in M^T}$ form a basis of $H^*(M)$ as an $\R$-module.
\end{lem}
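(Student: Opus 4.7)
My approach is to apply equivariant Morse theory to $\varphi=\Phi(\xi)$. Since $\xi$ is generic, $\varphi$ is a $T$-invariant Morse function with critical set $M^T$; at each $p\in M^T$ the Hessian of $\varphi$ decomposes $T_pM$ into weight eigenspaces, and the negative eigenspace $\nu_p^-$ has real dimension $2\lambda_p$ and equivariant Euler class $\Lambda_p^-$. After replacing $\xi$ with a nearby generic vector if necessary, I may assume the critical values of $\varphi$ are pairwise distinct; order the fixed points $p_1,\ldots,p_N$ so that $\varphi(p_1)<\cdots<\varphi(p_N)$, and set $M_i^\pm=\varphi^{-1}(-\infty,\varphi(p_i)\pm\epsilon)$ for small $\epsilon>0$.

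I will define $\gamma_{p_i}\in H_T^{2\lambda_{p_i}}(M)$ as a lift of the equivariant Thom class of $\nu_{p_i}^-$. An equivariant tubular neighborhood argument together with excision gives
\[
H_T^*(M_i^+,M_i^-)\cong \tilde H_T^*(\mathrm{Thom}(\nu_{p_i}^-))\cong \S(\ft^*)[-2\lambda_{p_i}],
\]
generated in minimal degree by the equivariant Thom class $\tau_{p_i}$. The connecting map in the long exact sequence of the pair $(M_i^+,M_i^-)$ factors through multiplication by the equivariant Euler class $\Lambda_{p_i}^-$; since each weight at $p_i$ is a nonzero element of $\ft^*$, $\Lambda_{p_i}^-$ is a non-zero-divisor in $\S(\ft^*)$, forcing the long exact sequence to collapse to the short exact sequence
\[
0\to H_T^*(M_i^+,M_i^-)\to H_T^*(M_i^+)\to H_T^*(M_i^-)\to 0.
\]
Induction on $i$ then lifts $\tau_{p_i}$ successively to the desired class $\gamma_{p_i}\in H_T^{2\lambda_{p_i}}(M)$.

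Property (i) follows from the standard identity that restricting a Thom class to the zero section yields the Euler class of the bundle, so $\gamma_{p_i}(p_i)=\Lambda_{p_i}^-$. Property (ii) follows because any $q\in M^T\setminus\{p_i\}$ with $\varphi(q)\leq\varphi(p_i)$ lies in $M_i^-$, on which the image of any class in $H_T^*(M_i^+,M_i^-)$ vanishes. For the basis statement, the split short exact sequences above imply by induction on $i$ that $\{\gamma_{p_j}\}_{j\leq i}$ restricts to a free $\S(\ft^*)$-basis of $H_T^*(M_i^+)$; taking $i=N$ yields the $\S(\ft^*)$-basis $\{\gamma_p\}$ of $H_T^*(M)$. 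Reducing modulo the augmentation ideal of $\S(\ft^*)$ (equivalently, applying $r$) and using equivariant formality $H_T^*(M)\cong H^*(M)\otimes\S(\ft^*)$ then gives that $\{r(\gamma_p)\}$ is an $\R$-basis of $H^*(M)$. The main obstacle is the identification of the connecting homomorphism with multiplication by $\Lambda_{p_i}^-$, which rests on an equivariant Thom-Gysin sequence at each critical level; once that step is secured, the equivariant perfectness of $\varphi$ is immediate and the remainder of the lemma is formal bookkeeping along the Morse filtration.
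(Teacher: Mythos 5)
The paper itself supplies no proof here: Lemma~\ref{lem:Kc} is attributed to Kirwan and the reference \cite{Ki} is cited, so you are reconstructing an argument the authors treat as known. Your reconstruction is essentially the standard Atiyah--Bott/Kirwan equivariant Morse-theoretic proof, and it is correct in outline: Thom isomorphism for the negative normal bundle, non-zero-divisor Euler class to split the long exact sequence, inductive lifting along the Morse filtration, and reduction modulo the augmentation ideal for the ordinary cohomology basis. One phrasing should be tightened. The connecting homomorphism $H_T^{k-1}(M_i^-)\to H_T^k(M_i^+,M_i^-)$ does not itself ``factor through multiplication by $\Lambda_{p_i}^-$''; rather, the composite
\[
H_T^*(M_i^+,M_i^-)\;\xrightarrow{\;j^*\;}\;H_T^*(M_i^+)\;\xrightarrow{\;i_{p_i}^*\;}\;H_T^*(\{p_i\})
\]
is, under the Thom isomorphism $H_T^*(M_i^+,M_i^-)\cong\S(\ft^*)[-2\lambda_{p_i}]$, multiplication by $\Lambda_{p_i}^-$. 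Since $\xi$ is generic, every weight of the isotropy representation on $\nu_{p_i}^-$ is nonzero, so $\Lambda_{p_i}^-$ is a non-zero-divisor; hence $j^*$ is injective and the connecting homomorphism vanishes. That is what collapses the long exact sequence into split short exact sequences of free $\S(\ft^*)$-modules. Your remaining steps are then as you describe. Two minor points worth flagging: the assumption of distinct critical values is harmless (one may perturb $\xi$ within the set of generic vectors, or handle multiple critical points at a common level by excision into disjoint tubular neighborhoods without changing the argument), and the choice of lift from $H_T^*(M_j^+)$ to $H_T^*(M_{j+1}^+)$ is not unique, but the ambiguity is supported on the fixed points above level $\varphi(p_j)$ and therefore does not affect the restrictions appearing in (i) and (ii).
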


We refer to these cohomology classes as the \emph{Kirwan classes}.

\begin{remark}
Given the triple $(M,\omega,\varphi)$, the set of classes $\{\gamma_p\}_{p\in M^T}$ 
is not uniquely characterized by (i) and (ii). 
In Section~\ref{sec:cas} 
we  introduce a different set of classes which always exist whenever $T=S^1$ 
and are canonically associated to $(M,\omega,\varphi)$.
\end{remark}

As an immediate consequence of Lemma \ref{lem:Kc}, for every $k$ we have 
\begin{equation}\label{eq:dimensions}
\dim_{\R}(H^{2k}(M))=\#\{s\in M^T\mid \lambda_s=k\}\quad\mbox{and}\quad H^{2k+1}(M)=0
\end{equation}

\begin{lem}\label{lem:bigdim}
Let $S^1$ be a circle group and $(M,\omega)$ a compact Hamiltonian $S^1$-space of dimension $2d$, 
with discrete fixed point set $M^{S^1}$. If $k \geqslant d$, then the restriction
\begin{equation}\label{eq:mapbij}
i^* \colon H_{S^1}^{2k}(M)\to H_{S^1}^{2k}(M^{S^1})
\end{equation}
is bijective.
\end{lem}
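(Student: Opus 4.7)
The plan is to reduce the statement to a dimension count, using the Kirwan Injectivity Theorem to get injectivity for free and then matching dimensions on both sides.

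First, by Theorem \ref{th:KIT} the restriction $i^* \colon H_{S^1}^{*}(M) \to H_{S^1}^{*}(M^{S^1})$ is injective in every degree. So it remains to show that the source and the target have the same (finite) real dimension in degree $2k$ when $k \geqslant d$.

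For the target, since $M^{S^1}$ is a finite set and $\ft^* \cong \R$, we have
$$
H_{S^1}^{2k}(M^{S^1}) \cong \maps(M^{S^1}, \S^{k}(\ft^*))\;,
$$
which has real dimension $\#M^{S^1}$, independent of $k$.

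For the source, I would use equivariant formality (Theorem \ref{th:KG}) to write $H^*_{S^1}(M) \cong H^*(M) \otimes \S(\ft^*)$ as graded $\S(\ft^*)$-modules, which gives
$$
\dim_{\R} H_{S^1}^{2k}(M) = \sum_{i+j=k} \dim_{\R} H^{2i}(M)\;.
$$
By Lemma \ref{lem:Kc} and the consequence \eqref{eq:dimensions}, $\dim_{\R} H^{2i}(M) = \#\{p \in M^{S^1} : \lambda_p = i\}$, so
$$
\dim_{\R} H_{S^1}^{2k}(M) = \#\{p \in M^{S^1} : \lambda_p \leqslant k\}\;.
$$
The key observation is that $\lambda_p$, the number of negative weights of the $S^1$-representation on $T_pM$, satisfies $0 \leqslant \lambda_p \leqslant d$ for every $p \in M^{S^1}$, since $\dim_{\R} M = 2d$. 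Hence for $k \geqslant d$ every fixed point contributes, giving $\dim_{\R} H_{S^1}^{2k}(M) = \#M^{S^1}$, matching the dimension of the target.

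Combining these, an injective $\R$-linear map between finite-dimensional vector spaces of equal dimension is bijective, which proves the lemma. There is no real obstacle here; the only thing to be careful about is invoking the correct grading convention (elements of $\ft^*$ sit in degree $2$) when computing $\dim_{\R} H_{S^1}^{2k}(M)$ via the Künneth-type decomposition coming from equivariant formality.
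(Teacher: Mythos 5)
Your proof is correct and is essentially the same argument as the paper's: Kirwan injectivity gives injectivity, and equivariant formality reduces the claim to the dimension count $\sum_{i=0}^{d}\dim_\R H^{2i}(M)=\#M^{S^1}$, which forces bijectivity. The only cosmetic difference is that you route the equality through \eqref{eq:dimensions} and the bound $\lambda_p\leqslant d$, whereas the paper invokes $\sum_{i=0}^{d}\dim_\R H^{2i}(M)=\#M^{S^1}$ directly; these are the same fact.
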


\begin{proof}
By equivariant formality 
$$
H_{S^1}^{2k}(M)=\bigoplus_{i=0}^k\left(H^{2i}(M)\otimes \mathbb{S}^{k-i}(\R)\right)\,,
$$
hence if $k \geqslant d$, 
\begin{align*}\dim_{\R} H_{S^1}^{2k}(M) = & \sum_{i=0}^d \dim_{\R} H^{2i}(M) \cdot \dim_{\R} \S^{k-i}(\R) = 
\sum_{i=0}^d \dim_{\R} H^{2i}(M) =\\
= & \#M^{S^1} =\# M^{S^1} \cdot \dim_{\R} \S^k(\R) =\dim_{\R} H_{S^1}^{2k}(M^{S^1})  \; .
\end{align*}

Since $i^*$ is injective (by Kirwan), the equality of dimensions implies that $i^*$ is bijective.
\end{proof}

\section{Examples}
\label{sec:examples}

In this section we will describe from the assignment perspective some well-known examples of equivariant cohomology classes. For simplicity we will assume, as in the previous section, that the $T$-action is Hamiltonian and that $M^T$ is finite.

\begin{exm}[\emph{Equivariant Chern classes}] For every $p \in M^T$ let $\alpha_{i,p}$, $i=1, \ldots, n$ be the weights of the isotropy representation of $T$ on $T_pM$. We will denote by $\sigma_m(\alpha_{1,p}, \ldots, \alpha_{n,p})$ the $m$th elementary symmetric polynomial in the $\alpha_{i,p}'$s, \emph{i.e.}
\begin{equation}
\prod_{i=1}^n (1+\alpha_{i,p}x) = \sum_{m=0}^n \sigma_{m}(\alpha_{1,p}, \ldots, \alpha_{n,p}) x^m
\end{equation}
and by $c_m \colon M^T\to \S(\ft^*)$ the map
\begin{equation}\label{eq:vg2}
M^T \ni p \to \sigma_m(\alpha_{1,p}, \ldots, \alpha_{n,p})\; .
\end{equation}

We claim that this map defines an assignment. To see this, let $Z^\circ$ be an orbitype stratum, $K$ its stabilizer group, and $Z$ its closure. For $p \in Z^T$ let $\alpha_{1,p}, \ldots, \alpha_{d,p}$ be the weights of the isotropy representation of $T$ on $T_pZ$ and $\alpha_{d+1, p}, \ldots, \alpha_{n,p}$ the remaining weights. Since the isotropy representation of $K$ on $T_pZ$ is trivial, $\alpha_{i,p}|_\fk = 0$ for $i=1, \ldots, d$ and since $Z$ is connected the isotropy representation of $K$ on $T_pM$ is the same for all $p \in Z$ and hence the same for all $p \in Z^T$. Thus for the remaining weights the restrictions 
$$\alpha_{d+1,p}|_\fk, \ldots, \alpha_{n,p}|_\fk$$
are the same for all $p$, up to permutations. Hence the restriction 
$$\sigma_m(\alpha_{1,p}, \ldots, \alpha_{n,p}) |_\fk$$
is the same for all $p \in Z^T$, and hence the map  of $\mathcal{P}_T^r(M)$ into $\S(\ft^*)$ sending $Z$ to 
\begin{equation}
c_m(Z) = \sigma_m(\alpha_{1,p}, \ldots, \alpha_{n,p})|_{\fk}\quad, \quad p \in Z^T
\end{equation}
defines an assignment. One can show (see \cite[p.212]{GGK}) that these assignments are cohomological  assignments associated with the images in $\cA_{T}(M)$ of the equivariant Chern classes of $M$.
\end{exm}

\begin{exm}[\emph{Equivariant Thom classes}]\label{ex:vg1} Let $X$ be a compact 
$T$-invariant symplectic submanifold of $M$ and for each $p \in X^T$ let 
$\alpha_{i,p}^N$, $i=1, \ldots, s$ be the weights of the representation of $T$ on 
the normal space to $X$ at $p$. We will show that
\begin{equation}\label{eq:vg4}
A_X(p) =  
\begin{cases}
0, & \text{ if } p \not\in X^T \\
\alpha_{1,p}^N \alpha_{2,p}^N \cdots \alpha_{s,p}^N, & \text{ if } p \in X^T
\end{cases}
\end{equation}
defines an assignment on $M$. To see this let $Z^{o}$ be an orbitype stratum, $Z$ its closure and $\fk = \fk_Z$ the infinitesimal stabilizer of $Z^{o}$. Then if $p \in Z^T$ is not in $X$,
$$A_X(p) |_{\fk} = 0$$
and if $p$ is in $X$, one of the weights $\alpha_{i,p}^N$ has to be a weight of the representation of $T$ on $T_pZ$ and hence $\alpha_{i,p}^N |_{\fk} = 0$. Hence $A_X(p) |_{\fk} = 0$. If $Z^T$ and $X$ are disjoint or if $T_pZ \cap N_pX$ is non-zero for all $p \in Z^T \cap X$, then $A_X(p)|_{\fk} = 0$ for all $p \in Z^T$.

Suppose now that for some $p \in Z^T \cap X$, $T_pZ$ is contained in $T_pX$. To handle this case we'll prove the following result.

\begin{thm}
If $T_pZ \subseteq T_pX$ for some $p \in X \cap Z$, then $Z \subseteq X$.
\end{thm}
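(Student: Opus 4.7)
Plan: Let $T_Z = \exp(\mathfrak{k}) \subseteq T$ denote the subtorus with Lie algebra $\mathfrak{k} = \mathfrak{k}_Z$. Then, as noted earlier in the paper, $Z$ is the connected component of $M^{T_Z}$ containing $p$. The strategy is to compare $Z$ with the connected component $W$ of $X^{T_Z}$ containing $p$, show that the hypothesis $T_pZ \subseteq T_pX$ forces $\dim W = \dim Z$, and then use a connectedness argument to conclude $W = Z \subseteq X$.

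First, since $X$ is $T$-invariant (hence $T_Z$-invariant), the fixed-point set $X^{T_Z}$ is a smooth submanifold of $X$; let $W$ be its connected component through $p$. Standard linearization for compact group actions gives $T_pW = (T_pX)^{T_Z}$, and similarly $T_pZ = (T_pM)^{T_Z}$. Because $W$ is connected, contains $p$, and is contained in $M^{T_Z}$, we have $W \subseteq Z$, and in particular $T_pW \subseteq T_pZ$. On the other hand, the hypothesis $T_pZ \subseteq T_pX$, together with the fact that $T_Z$ acts trivially on $T_pZ$ (since $Z$ is pointwise fixed by $T_Z$), gives
$$T_pZ \subseteq (T_pX)^{T_Z} = T_pW.$$
Combining the two inclusions yields $T_pW = T_pZ$, so $\dim W = \dim Z$.

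Finally I globalize. Since $W$ is a submanifold of $Z$ of the same dimension as $Z$, it is open in $Z$. Moreover, $W$ is a connected component of $X^{T_Z}$, which is closed in $X$; since $X$ is compact and hence closed in $M$, $W$ is closed in $M$, and in particular closed in $Z$. The closure $Z = \overline{Z^\circ}$ is connected (being the closure of the connected stratum $Z^\circ$), so any nonempty clopen subset equals $Z$; therefore $W = Z$, and $Z \subseteq X$ as required. I do not anticipate any real obstacle beyond the tangent-space identification $T_pW = T_pZ$, which is the one place the hypothesis is used and which makes the whole dimension count go through.
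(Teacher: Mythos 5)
Your proof is correct, and it takes a genuinely different route from the paper's. The paper introduces a $T$-invariant exponential map $\exp_p\colon T_pX\to X$ and argues locally: near each $q$ in the connected component $Y$ of $X\cap Z$ through $p$, one has $V\cap Z=\exp_q(\mathcal{U}\cap T_qZ)$, from which both $\{q\in Y : T_qZ\subseteq T_qX\}$ and its complement are open in $Y$; connectedness of $Y$ then forces the inclusion to hold everywhere, and from there $Y=Z$. You instead compare $Z$ directly with the component $W$ of $X^{T_Z}$ through $p$, using the standard linearization fact $T_pW=(T_pX)^{T_Z}$ together with the hypothesis to conclude $T_pW=T_pZ$, and then finish with a clopen argument on the connected set $Z$. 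The two approaches in fact track the same subset (one can check $Y=W$), but yours dispenses with the exponential map and Riemannian input, makes transparent exactly where the hypothesis enters (to place $T_pZ$ inside $(T_pX)^{T_Z}$), and spells out the final open-and-closed step that the paper's one-line ``by connectivity'' leaves implicit.
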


\begin{proof}
Let $\exp_p \colon T_pX \to X$ be a $T$-invariant exponential map. Then $\exp_p$ maps an open neighbourhood $\mathcal{U}$ of 0 in $T_pX$ diffeomorphically onto a neighborhood $V$ of $p$ in $X$ and maps $\mathcal{U} \cap T_pZ$ diffeomorphically onto $V \cap Z$. Thus if $Y$ is a connected component of $X \cap Z$, the set of $p'$s in $Y$ for which $T_pZ$ is contained in $T_pX$ and the set for which it's not are both open. By connectivity it follows that if $T_pZ$ is contained in $T_pX$ for some $p \in Y$, then $Y =Z$.
\end{proof}

As a corollary of this it follows that if $T_pZ$ is contained in $T_pX$ for some $p \in Z^T$ (and hence for all $p \in Z^T$), then the normal weights of the representation of $T$ on $N_pZ$ contain the normal weights, $\alpha_{i,p}^N$, of the representation of $T$ on $N_pX$ and hence as in the previous example, the weights $\alpha_{i,p}^N |_{\fk}$ are, up to permutations, the same for all $p \in Z^T$. In other words $A_X(p)|_{\fk}$ is the same for all $p \in Z^T$. This proves that the map $A_X$ extends to an assignment. (As in the previous example this is a cohomological assignment: the image under the map $\gamma$ of the Thom class of $X$. For details, see \cite[Ch. 10]{GS}).
\end{exm}

We'll conclude this section by describing an integral formula for assignments in which the Chern assignments that we defined above play a basic role. Let $[\eta]$ be an element of $H_T^{2m}(M)$, $\gamma([\eta])$ its associated assignment and $f \colon M^T \to \S(\ft^*)$ its restriction to $M^T$. Then if $Z$ is an infinitesimal orbit type stratum of dimension $2k$ and $Ch_n$ the top dimensional Chern form of $M$, one gets from \eqref{eq:abbv}
\begin{equation*}
\int_{\bar{Z}} Ch_n \eta = \sum_{p \in (\bar{Z})^T} \frac{Ch_n(p) f(p)}{\prod_{j=1}^k \alpha_{j,p}}  = \sum \prod_{j=k+1}^n \alpha_{j,p} f(p)\; , 
\end{equation*}
the expression on the left being an element of $\S^{m+\ell}(\ft^*)$, with $\ell= n-k$. If one restricts this polynomial to $\ft_Z$, each of the summands is equal to $c_\ell \gamma([\eta])(Z)$ and since the number of $p$'s is just the Euler characteristic $\chi_Z$ of $\bar{Z}$, we obtain
\begin{equation}
\left. \left( \int_{\bar{Z}} c_n \eta \right) \right|_{\ft_Z} = \chi_Z (c_{\ell}\gamma([\eta]))(Z) \; .
\end{equation}
Hence from the integral on the left one can recover the value that the assignment $\gamma([\eta])$ assigns to each orbit-type stratum.

\section{A Characterization of Cohomological Assignments}
\label{sec:cas}

Example~\ref{ex:not-cohomological} shows that cohomological assignments must satisfy
natural conditions imposed by the localization formula. 

\begin{prop}\label{prop:nec}
A necessary condition for an assignment $f$ to be a \spass{} is that  for every 
closure $Y = \overline{X}$ of an infinitesimal stratum $X$, and for every 
equivariant class $\eta \in H_T^*(Y)$, 
\begin{equation}\label{eq:polypair}
\sum_{p \in Y^T} \frac{f(p)\eta(p)}{e_Y(p)} \, \in \, \S(\ft^*)\; ,
\end{equation}
where $e_Y(p)$ is the equivariant Euler class of $Y$ at $p$. 
\end{prop}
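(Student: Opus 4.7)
The plan is to invoke the Atiyah--Bott--Berline--Vergne localization theorem applied to the submanifold $Y$. First I would verify that $Y$ is a smooth compact oriented $T$-invariant submanifold of $M$ to which ABBV applies: since $X$ is an infinitesimal stratum with stabilizer Lie algebra $\ft_X$, the closure $Y = \overline{X}$ is the connected component of $M^{T_X}$ containing $X$, which is a closed $T$-invariant submanifold. Orientability follows from the fact that the normal bundle to $M^{T_X}$ in $M$ carries a canonical complex structure coming from the $T_X$-action, and $M$ itself is oriented.

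Next, suppose $f = \gamma([\phi])$ for some class $[\phi] \in H_T^{\mathrm{even}}(M)$, represented in the Cartan model by $\phi = \phi_0 + \phi_2 + \cdots + \phi_{2k}$. By the definition \eqref{eq:Phi} of $\gamma$, for any fixed point $p \in M^T$ the value $f(p) = \phi_0(p)$ coincides with the ordinary pull-back $\iota_p^*[\phi] \in \S(\ft^*)$. Restricting $\phi$ to $Y$ produces an equivariant class $[\phi]|_Y \in H_T^*(Y)$, and the cup product with $\eta$ yields $[\phi]|_Y \cdot \eta \in H_T^*(Y)$. The $T$-equivariant integral of this class over $Y$ lies in $\S(\ft^*)$ by general principles (the push-forward to a point in equivariant de Rham theory lands in $\S(\ft^*)$).

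Now I would apply the ABBV localization formula \eqref{eq:abbv} on $Y$: since $Y^T = \overline{X}^T$ is finite (being a subset of $M^T$ under the assumption that $M^T$ is finite, which is the setting of Section~\ref{sec:cas}), the integral evaluates to
\begin{equation*}
\int_Y [\phi]|_Y \cdot \eta \;=\; \sum_{p \in Y^T} \frac{\iota_p^*([\phi]|_Y) \cdot \iota_p^*\eta}{e_Y(p)} \;=\; \sum_{p \in Y^T} \frac{f(p)\,\eta(p)}{e_Y(p)},
\end{equation*}
where the first equality is ABBV and the second uses $\iota_p^*([\phi]|_Y) = \iota_p^*[\phi] = f(p)$ together with the notational shorthand $\eta(p) = \iota_p^*\eta$. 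Since the left-hand side lies in $\S(\ft^*)$, so does the right-hand side, which is precisely \eqref{eq:polypair}.

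The only potential obstacle is the smoothness/orientability of $Y$ noted above, and the mild point that when $M^T$ is not assumed finite in full generality one must interpret the ABBV sum as an integral over the components of $Y^T$; under the standing finite-fixed-point hypothesis used throughout Section~\ref{sec:cas} this reduces to the displayed finite sum, and no further subtlety arises.
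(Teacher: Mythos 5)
Your proof is correct and follows exactly the reasoning the paper has in mind (the paper states Proposition~\ref{prop:nec} without an explicit proof, treating it as a direct consequence of the ABBV localization theorem, as foreshadowed in the discussion around \eqref{eq:abbv} and in Example~\ref{ex:not-cohomological}). You correctly identify $Y=\overline{X}$ as a closed component of $M^{T_X}$, note the orientability, observe that $\iota_p^*[\phi]=\phi_0(p)=f(p)$, and apply ABBV to $[\phi]|_Y\cdot\eta$ to conclude the sum is the equivariant integral $\int_Y [\phi]|_Y\cdot\eta\in\S(\ft^*)$.
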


When $\overline{X}$ is of dimension at most four for all strata $X$ of degree one, 
by a result of Goldin and Holm (\cite[Theorem 2]{GH})
it suffices to check \eqref{eq:polypair} only for $\eta=1$.

\begin{thm}[Goldin-Holm]\label{thm:GH}
Let $M$ be a compact, connected, Hamiltonian $T$-space, 
such that the $T$-action is effective and has isolated fixed points.
Moreover, suppose that for every infinitesimal stratum $X$ of degree one, the closure 
$Y=\overline{X}$ has dimension at most four. Then an assignment $f$ is cohomological 
if and only if 
\begin{equation}\label{eq:integral}
\sum_{p \in Y^T} \frac{f(p)}{e_Y(p)} \, \in \, \S(\ft^*)\; ,
\end{equation}
for every infinitesimal stratum $X$ of degree one, 
where $e_Y(p)$ is the equivariant Euler class of $Y=\overline{X}$ at $p$.
\end{thm}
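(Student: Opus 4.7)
The necessity of \eqref{eq:integral} is immediate from Proposition~\ref{prop:nec} with $\eta=1$. For sufficiency, my approach combines the Chang-Skjelbred theorem with an $\S(\fk^*)$-linear decomposition on each codimension-one stratum closure, followed by a case analysis on the $\fs^*$-degree in which Lemma~\ref{lem:bigdim} does the bulk of the work. By Theorem~\ref{isom ef} the restriction $r_T\colon \cA_T(M)\to \maps(M^T,\S(\ft^*))$ is injective, so $f$ is cohomological if and only if $r_T(f)\in \im(i_{M^T}^*)$; by the Chang-Skjelbred Theorem~\ref{th:CS_theorem} this reduces to showing $f|_{Y^T}\in\im(H_T^*(Y)\to H_T^*(Y^T))$ for every connected component $Y$ of $M^K$, where $K\subset T$ is a codimension-one isotropy subtorus. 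Components of $M^K$ that are isolated fixed points contribute nothing, and the remaining components are precisely the closures $Y=\overline{X}$ of degree-one strata with $\ft_X=\fk=\mathrm{Lie}(K)$, all of dimension at most four by hypothesis.

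Fix such a $Y$, pick a splitting $\ft^*=\fs^*\oplus\fk^*$ with $\fs^*$ the one-dimensional annihilator of $\fk$, and use \eqref{eq:equiv_cohom} to write $H_T^*(Y)=H_{S^1}^*(Y)\otimes \S(\fk^*)$. Expanding a pure degree-$2k$ assignment as $f(p)=\sum_{i+j=k}f^{(i,j)}(p)$ with $f^{(i,j)}(p)\in \S^i(\fs^*)\otimes \S^j(\fk^*)$, the problem reduces, for each pair $(i,j)$, to showing that the corresponding $\S^i(\fs^*)$-valued function on $Y^{S^1}$ lies in the image of $H_{S^1}^{2i}(Y)\to H_{S^1}^{2i}(Y^{S^1})$. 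Setting $d=\dim Y/2\leqslant 2$, the Euler class $e_Y(p)$ is a monomial in any generator $\theta$ of $\fs^*$ of degree $2d$, and the integrality condition splits accordingly into one condition per $(i,j)$: vanishing of $\sum_p f^{(i,j)}(p)/e_Y(p)$ when $i<d$, and an automatic polynomial identity when $i\geqslant d$.

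The argument then splits into three subcases indexed by $i$. For $i\geqslant d$, Lemma~\ref{lem:bigdim} applied to the Hamiltonian $S^1$-space $Y$ gives bijectivity of $H_{S^1}^{2i}(Y)\to H_{S^1}^{2i}(Y^{S^1})$, so $f^{(i,j)}$ is automatically in the image. For $i=0$, the target image is the space of $\S^j(\fk^*)$-valued functions that are constant on $Y^T$, and the assignment compatibility $f(p)|_{\ft_X}=f(q)|_{\ft_X}$, which kills the $\fs^*$-part and retains only $\S^0(\fs^*)\otimes\S^j(\fk^*)$, forces $f^{(0,j)}$ to be constant across $Y^T$. The delicate remaining case is $d=2$, $i=1$ (that is, $\dim Y=4$), which I expect to be the main technical obstacle: I must show that the single relation $\sum_p f^{(1,j)}(p)/e_Y(p)=0$ cuts out precisely the image of $H_{S^1}^{2}(Y)\to H_{S^1}^{2}(Y^{S^1})$. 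My plan is to verify this by explicit linear algebra after writing $f^{(1,j)}(p)=h_p\theta$ and $e_Y(p)=c_p\theta^2$, so that the integrality identity becomes the scalar relation $\sum_p h_p/c_p=0$; the Poincar\'e polynomial $1+(n-2)t^2+t^4$ of a four-dimensional Hamiltonian $S^1$-space with $n$ isolated fixed points forces $\im(H_{S^1}^{2}(Y)\to H_{S^1}^{2}(Y^{S^1}))$ to have codimension exactly one, and the necessity direction embeds the image into the kernel of the scalar relation, so a dimension count identifies the two. Assembling the three subcases via the Chang-Skjelbred intersection and the isomorphism $\gamma$ of Theorem~\ref{isom ef} then yields $f\in\img$.
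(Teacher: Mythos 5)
The paper does not prove this result; it is stated with a citation to Goldin and Holm \cite[Theorem 2]{GH}, so there is no in-paper argument to compare yours against. What you have written is a correct, self-contained proof assembled from the paper's own toolkit (Chang-Skjelbred reduction, the tensor decomposition \eqref{eq:equiv_cohom}, Kirwan injectivity, Lemma~\ref{lem:bigdim}, and Poincar\'e duality), and it is worth contrasting with the way the paper handles related characterizations. All the main steps check out. Necessity is Proposition~\ref{prop:nec} with $\eta=1$. For sufficiency, the Chang-Skjelbred reduction correctly replaces cohomologicality by the family of conditions $f|_{Y^T}\in\im\bigl(H_T^*(Y)\to H_T^*(Y^T)\bigr)$ over positive-dimensional components $Y$ of $M^K$; these are exactly the closures of degree-one strata, since for a component $Y$ with $\dim Y>0$ the set of points with $\ft_p=\fk$ is the complement of the (finite) $T$-fixed set, hence connected, open and dense. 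The grading by $\fs^*$-degree $i$ splits cleanly because every isotropy weight on $T_pY$ annihilates $\fk$, so $e_Y(p)=c_p\theta^d$ with $c_p\in\R\setminus\{0\}$, and the $\S(\fk^*)$-module structure is carried along by $(\ \cdot\ )\otimes\S^j(\fk^*)$; in particular $V_1\otimes\S^{k-1}(\fk^*)=\ker(\phi\otimes\mathrm{id})$ for $\phi(h)=\sum_p h_p/c_p$, which is exactly what you use. The dimension count in the $d=2$, $i=1$ case is right: $\dim H_{S^1}^2(Y)=\dim H^0(Y)+\dim H^2(Y)=1+(n-2)=n-1$ by equivariant formality and \eqref{eq:dimensions}, Kirwan injectivity gives $\dim V_1=n-1$, the residue functional is nonzero and its kernel is a hyperplane of $\maps(Y^{S^1},\S^1(\fs^*))\cong\R^n$, and ABBV applied to any class in $H_{S^1}^2(Y)$ (degree $<\dim Y$) places $V_1$ inside that hyperplane, forcing equality. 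The $i\geqslant d$ case is Lemma~\ref{lem:bigdim}, and the $i=0$ case is the assignment compatibility across $Y^T$, whose target $V_0$ is $H^0(Y)=\R\cdot\mathbf{1}$ since $Y$ is connected.

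Where your route genuinely differs from the paper's machinery: the paper's later characterizations (Theorem~\ref{milena}, Theorem~\ref{thm:characterization}, Corollary~\ref{cor:min}, and ultimately Theorem~\ref{th:mainsection}) all run through canonical classes $\tau_p$ and the dual basis trick, which requires knowing the $\tau_p$ explicitly; in the $\dim Y\leqslant 4$ regime you bypass canonical classes entirely and use only the one trivial class $\tau_{q_0}=1$ together with raw linear algebra. Applied to each $Y$, Theorem~\ref{thm:characterization} would produce the same single nontrivial condition in $\fs^*$-degree $1$ (the $\lambda_q=0$ constraint, with $\tau_{q_0}=1$) plus a batch of $\lambda_q\in\{0,1\}$ constraints in degree $0$ that your argument shows are absorbed by the assignment compatibility. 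So your proof is the more elementary one, and it makes transparent why $\eta=1$ suffices in low dimension: in a $4$-manifold the only $\fs^*$-degree for which the restriction fails to be onto and the assignment axiom gives nothing is $i=1$, and there the image has codimension exactly one. This is consistent with, and illuminates, the role Theorem~\ref{thm:GH} plays as the case (i)/(i)' in the paper's Theorem~\ref{th:mainsection}.

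Two small points worth making explicit if you write this up: (a) $Y$ is a compact connected symplectic $S^1$-invariant submanifold of $M$ with discrete fixed set, so it is itself a compact Hamiltonian $S^1$-space to which Kirwan injectivity, equivariant formality, and Lemma~\ref{lem:bigdim} apply; and (b) a homogeneous degree-$2k$ assignment splits under the chosen splitting $\ft^*=\fs^*\oplus\fk^*$ as $f(p)=\sum_{i+j=k}f^{(i,j)}(p)$, and both the image $\im(\iota_Y^*)$ and the integrality condition are graded compatibly, so checking componentwise is legitimate; the splitting can be chosen per $K$ and need not be coherent across different subtori.
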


If some of the closures have dimension higher than four, 
that is no longer the case. The following is a counter-example.

\begin{exm}
The assignment $f$ in Example~\ref{ex:cp1qb} satisfies \eqref{eq:integral}, as
$$\sum_{p \in M^T} \frac{f(p)}{e_M(p)} = \frac{\alpha}{\alpha^3} + \frac{-\alpha}{\alpha^3} = 0\; .$$
Nevertheless, $f$ is not a \spass. If it were, then so would be $f^2$; but
$$\sum_{p \in M^T} \frac{f^2(p)}{e_M(p)} = \frac{\alpha^2}{\alpha^3} + \frac{(-\alpha)^2}{\alpha^3} = \frac{2}{\alpha}\; ,$$
which shows that  $f^2$ is not in the image of $\gamma$, hence neither is $f$.
\end{exm}

The next result generalizes the example above.

\begin{prop}
Let $(M,\omega)$ be a compact, connected, Hamiltonian $T$-space of dimension $2n\geqslant 6$,  with isolated fixed points 
such that for every codimension one subtorus $K \subset T$, $M^K = M^T$ or $M^K = M$. 
Then there exist assignments that satisfy \eqref{eq:integral} but are not cohomological.
\end{prop}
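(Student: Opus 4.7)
My plan is to unpack the hypothesis and then exhibit a non-cohomological assignment built from the delta classes of Example~\ref{delta} whose ABBV defect is invisible when testing with $\eta=1$ but visible when testing with the equivariant symplectic class.

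First I would interpret the hypothesis. At any $p\in M^T$, each isotropy weight $\alpha_i$ on $T_p M$ determines a codimension one subtorus $K_i=\exp(\ker\alpha_i)$, and the connected component of $M^{K_i}$ through $p$ contains the full $\alpha_i$-weight direction, hence strictly contains $\{p\}$; so $M^{K_i}\neq M^T$. The hypothesis then forces $M^{K_i}=M$, i.e.\ $K_i$ acts trivially on $M$. Applying this to every weight at every fixed point shows that the kernel of the $T$-action is a codimension one subtorus $K\subset T$, all weights are nonzero multiples of a single $\alpha\in\ft^*$ generating $(\ft/\fk)^*$, the action factors as $T\twoheadrightarrow T/K\simeq S^1$, and the Euler class takes the form $e_M(p)=N_p\alpha^n$ with $N_p\in\R^\times$. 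Consequently, by Proposition~\ref{delta class}, each delta class reduces to $\delta_p(p)=\alpha$ and $\delta_p(r)=0$ for $r\neq p$.

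Next I would build the candidate. Writing $\Phi=\phi\,\alpha$ for the $S^1$-component of the moment map, $\phi\colon M\to\R$ is a nonconstant perfect Morse function with critical set $M^T$, so its minimum and maximum supply fixed points $p,q\in M^T$ with $\phi(p)\neq\phi(q)$. Set
\[
A:=\delta_p-\frac{N_q}{N_p}\,\delta_q,
\]
a nonzero polynomial assignment by linearity. Since every codimension one $K'\neq K$ satisfies $M^{K'}=M^T$, the only closure of a degree one stratum is $Y=M$, and \eqref{eq:integral} reduces to the single condition
\[
\sum_{r\in M^T}\frac{A(r)}{e_M(r)}=\frac{\alpha}{N_p\alpha^n}-\frac{(N_q/N_p)\,\alpha}{N_q\alpha^n}=0,
\]
which is automatic.

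The crux is to show $A$ is not cohomological. Applying Proposition~\ref{prop:nec} to $Y=M$ with the equivariant symplectic class $\eta=[\omega-\Phi]\in H_T^2(M)$, whose restriction to any fixed point $r$ is $\eta(r)=-\Phi(r)=-\phi(r)\,\alpha$, a direct computation gives
\[
\sum_{r\in M^T}\frac{A(r)\,\eta(r)}{e_M(r)}=\frac{\phi(q)-\phi(p)}{N_p\,\alpha^{n-2}},
\]
which fails to lie in $\S(\ft^*)$ since $\phi(q)\neq\phi(p)$ and $n-2\geq 1$ (from $\dim M=2n\geq 6$). Hence $A$ is not cohomological. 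The main obstacle is precisely this last step: \eqref{eq:integral} uses only $\eta=1$, for which the two terms of $A$ cancel and the defect is invisible; one must enrich the test class to $\eta=[\omega-\Phi]$, and the hypothesis $n\geq 3$ is exactly what makes the resulting pole in $\alpha$ survive.
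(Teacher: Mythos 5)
Your proof is correct. You interpret the hypothesis in the same way as the paper (all isotropy weights are collinear, the action factors through a circle, $e_M(p)=N_p\alpha^n$), and you build essentially the same two-point assignment (the paper takes $f(p_1)=\lambda_1\alpha$, $f(p_2)=-\lambda_2\alpha$, $f=0$ elsewhere, which is $N_p\cdot A$ in your notation once $p_1=p$, $p_2=q$, $\lambda_i=N_{p_i}$). Where you genuinely diverge is in how the non\mbox{-}cohomologicality is detected. The paper invokes the ring structure: it chooses $p_1,p_2$ so that $\lambda_1+\lambda_2\neq 0$ (such a pair exists since the $\lambda$'s are all nonzero) and then computes $\int_M f^2 = (\lambda_1+\lambda_2)/\alpha^{n-2}\notin\S(\ft^*)$, exactly as in the counterexample preceding the proposition. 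You instead pair $A$ against the equivariant symplectic class $\eta=[\omega-\Phi]$ via Proposition~\ref{prop:nec}, choosing $p,q$ to be the moment-map minimum and maximum so that $\phi(p)\neq\phi(q)$, and obtain the pole $(\phi(q)-\phi(p))/(N_p\alpha^{n-2})$. Your route trades the small combinatorial lemma ``some two $\lambda$'s have nonzero sum'' for the geometric fact that a nontrivial Hamiltonian action has a nonconstant moment map; it also illustrates more directly that \eqref{eq:integral} (the case $\eta=1$) is strictly weaker than the full necessary condition of Proposition~\ref{prop:nec}. One small loose end: the moment map need not take values in $\R\alpha$ \emph{a priori} --- it is constant on the kernel directions --- so you should either normalize $\Phi$ by a constant or note that the constant part drops out of the sum because $\sum_r A(r)/e_M(r)=0$; with that remark the computation is exactly as you wrote it.
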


\begin{proof}
The conditions of the hypothesis imply that all the isotropy weights are collinear; let $\alpha$ be their direction. If $K = \exp{\ker \alpha}$, then $M^K = M$, otherwise $M^K = M^T$. Let 
$$M^T = \{ p_1, \ldots, p_N\}$$
be the set of fixed points. Since $[\omega]^k\neq 0$ for all $0\leqslant k\leqslant \dim(M)/2$, from \eqref{eq:dimensions} it follows that $N = \dim_{\R} H^*(M) \geqslant
n+1\geqslant 4$ and
$$H_T^*(M) \simeq H^*(M) \otimes \S(\ft^*)$$
is a free $\S(\ft^*)$-module of rank $N$.
For each fixed point $p_k$ we have 
$e_M(p_k) = \lambda_k \alpha^n$, for a non-zero real  $\lambda_k$. 
There exist two values $\lambda_i$ and $\lambda_j$ that do not add up to zero, otherwise, 
all the $\lambda$'s would be zero. Without loss of generality, we can assume that 
$\lambda_1+\lambda_2 \neq 0$.
Let $f \colon \cP_T(M) \to \S(\ft^*)$ be the assignment determined by 
$f(p_1) = \lambda_1 \alpha$, $f(p_2) = -\lambda_2 \alpha$, and $f(p_k) = 0$ for $k \geqslant 3$.

Then 
$$\sum_{k=1}^N \frac{f(p_k)}{\lambda_k \alpha^n} = \frac{\lambda_1 \alpha}{\lambda_1 \alpha^n} + \frac{-\lambda_2\alpha}{\lambda_2\alpha^n}=  0\; ,$$
but $f$ is not cohomological: if it were, then so it would be $f^2$, and that is impossible, because
$$\sum_{k=1}^N \frac{f^2(p_k)}{\lambda_k \alpha^n}  = \frac{\lambda_1^2 \alpha^2}{\lambda_1 \alpha^n} + \frac{\lambda_2^2 \alpha^2}{\lambda_2 \alpha^n} = \frac{\lambda_1 + \lambda_2}{\alpha^{n-2}}\; ,$$
is not a polynomial in $\S(\ft^*)$, since by assumption $n\geqslant 3$. 
\end{proof}

In the following we discuss necessary and sufficient conditions for an assignment to be 
in the image of the canonical map $\gamma$, hence to be cohomological.

Let $M$ be a compact Hamiltonian $T$-manifold, 
with moment map $\Phi\colon \ft\to \Omega^0(M)$, 
and suppose that the fixed point set $M^T$ is discrete. Fix a generic $\xi$ in $\mathfrak{t}$, 
and let $\varphi=\Phi(\xi)\colon M\to \R$ (see section 5).

\begin{defn}
An equivariant cohomology class $\tau_p$ is called a \emph{canonical class} at $p\in M^T$ if $\tau_p\in H_T^{2\lambda_p}(M)$ and
\begin{itemize}
\item[(i)'] $\tau_p(p)=\Lambda_p^-$ ;
\item[(ii)'] $\tau_p(q)=0$ for all $q\in M^T\setminus \{p\}$ such that $\lambda_q\leqslant \lambda_p$.
\end{itemize}
(Here $\tau_p(q)$ denotes the restriction of $\tau_p$ to the fixed point $q\in M^T$.)
\end{defn}

Canonical classes were introduced by Goldin and Tolman \cite{GT}, 
inspired by previous work of Guillemin and Zara \cite{GZ} (see also \cite{ST}). 
They do not always exist, however when they exist they are uniquely characterized by 
$(M,\omega,\varphi)$ and satisfy property (ii) of Lemma \ref{lem:Kc}. 
So canonical classes are a particular choice of Kirwan classes. Hence,
if there exists a canonical class $\tau_p$ for every $p\in M^T$, 
then $\{\tau_p\}_{p\in M^T}$ is a basis
for $H_T^*(M)$ as an $\mathbb{S}(\mathfrak{t}^*)$-module 
(see also Proposition 2.3, Lemma 2.7 and Lemma 2.8 in \cite{GT}).
 
Let $T=S^1$ and $Lie(S^1)=\fs$. 
Then canonical classes always exist (see \cite[Lemma 1.13]{MT}). 
Moreover, in this case they have a few special properties:
\begin{enumerate}
\item The restrictions of $\{ \tau_p\}_{p\in M^{S^1}}$ to $M^{S^1}$ form a basis of $\text{Maps}(M^{S^1}, Q(\fs^*))$ over $Q(\fs^*)$, the field of fractions of $\S(\fs^*)$.
\item There exists a similar basis $\{ \tau_p^{*}\}_{p\in M^{S^1}}$ (obtained by reversing the orientation of $S^1$), dual to $\{ \tau_p\}_{p\in M^{S^1}}$ in the sense that
$$\sum_{r \in M^{S^1}} \frac{\tau_p(r) \tau_q^{*}(r)}{e_M(r)} = \delta_{p,q}\; .$$
\end{enumerate}

A consequence of these properties is that for such spaces, the condition given in Proposition~\ref{prop:nec} is not only necessary, but also sufficient.

\begin{thm}\label{milena}
Let $M$ be a compact Hamiltonian $S^1$-manifold with discrete 
fixed point set $M^{S^1}$, and $\{\tau_p\}_{p\in M^{S^1}}$ the canonical classes for a fixed 
orientation of $\fs^*$. A polynomial assignment $f \in \Aktm{}{S^1}{M}$ is a \spass{} if 
and only if for all $q \in M^{S^1}$,
\begin{equation}\label{mc}
\sum_{p \in M^{S^1}} \frac{f(p)\tau_q(p)}{e_M(p)} \in \S(\fs^*)\; .
\end{equation}
\end{thm}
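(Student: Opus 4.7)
The necessity direction is immediate: Proposition~\ref{prop:nec} applied to $Y = M$ (the closure of the open dense stratum) and to the equivariant class $\tau_q \in H_{S^1}^{*}(M)$ yields exactly condition \eqref{mc}.

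For the converse, fix $f \in \Aktm{}{S^1}{M}$ satisfying \eqref{mc}. For each $q \in M^{S^1}$ set
\[
c_q \;:=\; \sum_{p \in M^{S^1}} \frac{f(p)\tau_q(p)}{e_M(p)} \;\in\; \S(\fs^*),
\]
which is a polynomial by hypothesis. Using the dual basis $\{\tau_q^{*}\}_{q \in M^{S^1}}$ provided by property~(2), define the candidate equivariant class
\[
\eta \;:=\; \sum_{q \in M^{S^1}} c_q\, \tau_q^{*} \;\in\; H_{S^1}^{*}(M).
\]
The plan is to verify that $\eta$ restricts to $f$ on $M^{S^1}$ and then to conclude that $\gamma([\eta]) = f$ as polynomial assignments.

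To check the restriction, invoke property~(1): $\{\tau_q^{*}|_{M^{S^1}}\}_{q \in M^{S^1}}$ is a basis of $\maps(M^{S^1}, Q(\fs^*))$ over the fraction field $Q(\fs^*)$, so there exist unique coefficients $d_q \in Q(\fs^*)$ with $f = \sum_{q} d_q \tau_q^{*}|_{M^{S^1}}$. Pairing both sides against $\tau_r$ via the ABBV bilinear form and using the orthogonality relation from property~(2) gives
\[
c_r \;=\; \sum_{p} \frac{f(p)\tau_r(p)}{e_M(p)} \;=\; \sum_{q} d_q \sum_{p} \frac{\tau_r(p)\tau_q^{*}(p)}{e_M(p)} \;=\; d_r.
\]
Thus $d_r = c_r \in \S(\fs^*)$ for every $r$, so $\eta$ is well-defined and $i_{M^{S^1}}^{*}(\eta) = f|_{M^{S^1}}$. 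Finally, by Lemma~\ref{minimal strata} the minimal strata of $M$ coincide with $M^{S^1}$, so any polynomial assignment is determined by its restriction to $M^{S^1}$. Since $\gamma([\eta])$ and $f$ are both polynomial assignments that agree on $M^{S^1}$, they agree everywhere, and hence $f \in \img$.

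The only real subtlety is the ``upgrade'' from $Q(\fs^*)$-coefficients to $\S(\fs^*)$-coefficients in the expansion of $f$ against the dual basis: the identity $d_r = c_r$ shows that hypothesis \eqref{mc} is precisely what forces integrality of these coefficients, after which the existence of the desired lift $\eta$ is automatic. The whole argument hinges on the availability of the canonical classes and their dual basis, which is special to the circle case and does not readily generalize to higher-dimensional tori.
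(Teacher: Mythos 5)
Your proof is correct and follows essentially the same route as the paper's: expand $f$ in the dual basis $\{\tau_q^*\}$ over the fraction field, pair against $\tau_r$ and use orthogonality to identify the coefficients with the sums in \eqref{mc}, and conclude they lie in $\S(\fs^*)$. The one small service your write-up performs beyond the paper's is to say explicitly, via Lemma~\ref{minimal strata}, why agreement on $M^{S^1}$ forces $\gamma([\eta])=f$ as full assignments; the paper leaves that step implicit.
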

\begin{proof}
If $f$ is a cohomological assignment then \eqref{mc} is a consequence of the Atiyah-Bott-Berline-Vergne Localization
theorem. We need to prove the converse.
By (1) above, there exists $\{\alpha_p\}_{p\in M^{S^1}}\subset Q(\fs^*)$ such that $f=\sum_{p\in M^{S^1}}\alpha_p\tau_p^*$. 
Hence it is sufficient to prove that  $\{\alpha_p\}_{p\in M^{S^1}}\subset \S(\fs^*)$.
Observe that for every $q\in M^{S^1}$
\begin{equation*}
\begin{aligned}
\sum_{r\in M^{S^1}} 
\frac{f(r)\tau_q(r)}{e_M(r)}= & \sum_{r\in M^{S^1}}\frac{(\sum_{p\in M^{S^1}}\alpha_p\tau_p^*(r))\tau_q(r)}{e_M(r)}= \\  = &  \sum_{p\in M^{S^1}}
\alpha_p\frac{\sum_{r\in M^{S^1}}\tau_p^*(r)\tau_q(r)}{e_M(r)}  = \sum_{p\in M^{S^1}}\alpha_p\delta_{p,q}=\alpha_q \;,\\
\end{aligned}
\end{equation*}
where we use property (2) above for the second-last equation. The conclusion follows from \eqref{mc}.
\end{proof}

The result in Theorem \ref{milena} agrees with \cite[Theorem 1.6]{P}; however the proof is different.
The following Corollary is an easy consequence of Theorem \ref{milena}.
\begin{corollary}\label{defect torsion}
Let $M$ be a compact Hamiltonian $S^1$-manifold with discrete fixed point set $M^{S^1}$. Then the
\quotmod{} module is a torsion module over $\mathbb{S}(\fs^*)=\R[x]$.
\end{corollary}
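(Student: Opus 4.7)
The plan is to derive the corollary directly from Theorem \ref{milena} by interpreting the image of $\gamma$ as a kernel condition in the field of fractions $Q(\fs^*) = \R(x)$.

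First, I would unpack what it means for the defect module $\cA_{S^1}(M)/\img$ to be torsion over $\S(\fs^*) = \R[x]$: given any $f \in \cA_{S^1}(M)$, I must exhibit a nonzero polynomial $g \in \R[x]$ such that $gf$ lies in $\img$, i.e. is a cohomological assignment. Here I use that $\cA_{S^1}(M)$ is an $\S(\fs^*)$-module in the obvious way (scalar multiplication preserves restriction compatibility between strata), so $gf$ is again a polynomial assignment.

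Next, for each $q \in M^{S^1}$, I would consider the quantity
\begin{equation*}
S_q(f) \;=\; \sum_{p \in M^{S^1}} \frac{f(p)\,\tau_q(p)}{e_M(p)} \;\in\; Q(\fs^*)\,,
\end{equation*}
which is a finite sum (since $M$ is compact and $M^{S^1}$ is discrete, hence finite) of elements of $Q(\fs^*)$, hence a rational function in one variable. Write each $S_q(f) = a_q/b_q$ with $a_q, b_q \in \R[x]$ and $b_q \neq 0$, and set $g = \prod_{q \in M^{S^1}} b_q \in \R[x]\setminus\{0\}$. Then for every $q$,
\begin{equation*}
\sum_{p \in M^{S^1}} \frac{(gf)(p)\,\tau_q(p)}{e_M(p)} \;=\; g \cdot S_q(f) \;\in\; \R[x] = \S(\fs^*)\,.
\end{equation*}

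Finally, Theorem~\ref{milena} applied to $gf$ with this criterion verified for every $q \in M^{S^1}$ yields that $gf$ is a cohomological assignment. Thus $[f]$ is annihilated by $g$ in $\cA_{S^1}(M)/\img$, proving that the defect module is torsion over $\R[x]$. There is no real obstacle here: the only substantive input is Theorem~\ref{milena}, together with the elementary fact that $\R[x]$ has a field of fractions in which a finite set of elements admits a common denominator; the one-dimensionality of $\fs$ is what makes this common-denominator step completely painless.
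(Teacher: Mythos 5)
Your proof is correct and takes essentially the same approach as the paper: invoke Theorem~\ref{milena}, observe that the finitely many expressions $S_q(f)\in Q(\fs^*)$ can be cleared to polynomials, and conclude. The only cosmetic difference is that you take a generic common denominator $g=\prod_q b_q$, while the paper multiplies by $x^N$ for $N$ large — which works because each $e_M(p)$ is a nonzero scalar multiple of $x^{n}$ (the isotropy weights all lie on the line $\R x$), so every $b_q$ is in fact a power of $x$ and the two annihilators agree up to scalar.
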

\begin{proof}
We need to prove that for each $f\in \mathcal{A}_{S^1}(M)$, there exists $N\in \mathbb{Z}_{\geqslant 0}$ such that $x^Nf\in \mathcal{H}_{S^1}(M)$.
Let $\{\tau_p\}_{p\in M^{S^1}}$ be the set of canonical classes associated to a fixed orientation of $\fs^*$.
For each $f\in \mathcal{A}_{S^1}(M)$ the sum in \eqref{mc} is an element of $Q(\fs^*)$. However it is clear that, by multiplying $f$ with $x^N$
for $N$ sufficiently large, $$\sum_{p\in M^{S^1}}\frac{x^Nf(p)\tau_q(p)}{e_M(p)}\in \mathbb{S}(\fs^*)\quad\mbox{for every}\;\;q\in M^{S^1}\;,$$
and the conclusion follows from Theorem \ref{milena}.
\end{proof}

\begin{remark}\label{alternative}
It is straightforward to see that Theorem \ref{milena} implies Lemma \ref{lem:bigdim}, thus giving an alternative proof of it.
\end{remark}
The
next theorem is a refinement of Theorem \ref{milena}.
\begin{thm}\label{thm:characterization}
Let $M^{2n}$ be a compact Hamiltonian $S^1$-manifold of dimension $2n$ with discrete 
fixed point set $M^{S^1}$, and $\{\tau_p\}_{p\in M^{S^1}}$ the canonical classes for a fixed 
orientation of $\fs^*$. An assignment $f \in \cA^{2k}_{S^1}(M)$ is cohomological
if and only if 
\begin{equation}\label{eq:condition}
\sum_{p\in M^{S^1}}\frac{f(p)\tau_q(p)}{e_M(p)}=0
\end{equation}
for all $q\in M^{S^1}$\ such that $\lambda_q<n-k$.
\end{thm}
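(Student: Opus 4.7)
My plan is to derive Theorem~\ref{thm:characterization} as a direct consequence of Theorem~\ref{milena}, exploiting the fact that $\fs$ is one-dimensional, so every homogeneous element of $\mathbb{S}(\fs^*)\simeq \R[x]$ is a scalar multiple of a single power of $x$ and $Q(\fs^*)\simeq \R(x)$ has the analogous property for negative exponents.

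The first step is to invoke Theorem~\ref{milena}: the assignment $f \in \cA^{2k}_{S^1}(M)$ is cohomological if and only if, for every $q \in M^{S^1}$, the rational function
$$
\alpha_q \;:=\; \sum_{p \in M^{S^1}} \frac{f(p)\,\tau_q(p)}{e_M(p)} \;\in\; Q(\fs^*)
$$
lies in $\mathbb{S}(\fs^*)$. So the task reduces to reinterpreting the membership condition $\alpha_q \in \mathbb{S}(\fs^*)$ according to the value of $\lambda_q$.

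Next I would record the degrees of the three factors: $f(p) \in \mathbb{S}^k(\fs^*)$ by hypothesis, $\tau_q(p) \in \mathbb{S}^{\lambda_q}(\fs^*)$ since $\tau_q \in H_{S^1}^{2\lambda_q}(M)$, and $e_M(p) \in \mathbb{S}^n(\fs^*)$ since $\dim M = 2n$. Under the identification $\mathbb{S}(\fs^*)\simeq\R[x]$, each of these is a scalar multiple of $x^k$, $x^{\lambda_q}$, and $x^n$ respectively, hence every summand in $\alpha_q$ is a scalar multiple of $x^{k+\lambda_q-n}$, and therefore so is $\alpha_q$ itself. A scalar multiple $c\,x^d$ of a power of $x$ belongs to $\R[x]$ if and only if either $c=0$ or $d\geqslant 0$. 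Consequently the requirement $\alpha_q \in \mathbb{S}(\fs^*)$ is automatic when $k+\lambda_q-n \geqslant 0$, i.e.\ $\lambda_q \geqslant n-k$, and is equivalent to the vanishing $\alpha_q = 0$ when $\lambda_q < n-k$. Feeding this back into Theorem~\ref{milena} yields exactly the statement of Theorem~\ref{thm:characterization}.

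There is no real obstacle in this plan: the whole refinement amounts to the observation that in the one-dimensional setting the polynomial/rational distinction collapses to pure degree bookkeeping, which makes the ``large $\lambda_q$'' half of the conditions in Theorem~\ref{milena} automatic and leaves only the ``small $\lambda_q$'' vanishing conditions to verify. (As a sanity check, when $k\geqslant n$ the condition $\lambda_q < n-k$ is vacuous, and we recover the conclusion of Lemma~\ref{lem:bigdim}.)
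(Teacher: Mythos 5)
Your proof is correct, and it takes a genuinely different route from the paper. You observe that Theorem~\ref{thm:characterization} is in fact \emph{equivalent} to Theorem~\ref{milena}, because $\fs^*$ is one-dimensional: each $\alpha_q:=\sum_p f(p)\tau_q(p)/e_M(p)$ is homogeneous of degree $k+\lambda_q-n$ in $Q(\fs^*)\simeq\R(x)$, hence a scalar multiple of a single power of $x$, so the polynomiality condition of Theorem~\ref{milena} is vacuous when $\lambda_q\geqslant n-k$ and collapses to $\alpha_q=0$ when $\lambda_q<n-k$. The paper does not invoke Theorem~\ref{milena} at all; instead it argues by a dimension count. Using the upper-triangularity property (ii)' of the canonical classes, it shows that a fixed point of Morse index $n-k-1$ has its value $f(q)$ determined by the values of $f$ at points with $\lambda_p\geqslant n-k$, and then inductively that the space of assignments satisfying \eqref{eq:condition} has dimension at most $\#\{s:\lambda_s\geqslant n-k\}=\dim_\R H^{2k}_{S^1}(M)$; combined with the necessity of \eqref{eq:condition} (from ABBV), the dimension count forces equality. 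Your approach is shorter and makes transparent that the ``refinement'' is really a reformulation; the paper's approach is self-contained given ABBV and makes visible the upper-triangular structure of canonical classes, which is what underlies Remark~\ref{alternative}. Both are valid.
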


\begin{proof} The proof is based on a dimension count.

If $f$ is a \spass{}, 
then \eqref{eq:condition} is a consequence of the Atiyah-Bott-Berline-Vergne Localization formula.

Suppose now that $f$ is an assignment of degree $2k$ satisfying \eqref{eq:condition}.
Let $q\in M^{S^1}$ be such that $\lambda_q=n\!-\!k\!-\!1$. From the properties of canonical 
classes it follows that
$$
0 = \sum_{p\in M^{S^1}}\frac{f(p)\tau_q(p)}{e_M(p)} = \frac{f(q)\Lambda_q^-}{e_M(q)}+\sum_{\lambda_p\geqslant n-k}\frac{f(p)\tau_q(p)}{e_M(p)}\;.
$$
Hence for all $q\in M^{S^1}$ such that $\lambda_q=n\!-\!k\!-\!1$, $f(q)$ is determined by the values of $f$ at $p$, where $\lambda_p \geqslant n\!-\!k$. 
Inductively, an assignment of degree $2k$ is determined by its values at the fixed points of Morse index greater or equal to $2(n\!-\!k)$, implying that
the space of assignments of degree $2k$ satisfying \eqref{eq:condition} has dimension 
at most
$$
\#\{s\in M^{S^1}\mid \lambda_s\geqslant n-k\} = \#\{s\in M^{S^1}\mid \lambda_s\leqslant k\} = \dim_{\R} H_{S^1}^{2k}(M)
$$
by Poincar\'e duality and \eqref{eq:dimensions}. Since all cohomological assignments 
of degree $2k$ satisfy \eqref{eq:condition}, a dimension count implies that all 
assignments  satisfying \eqref{eq:condition} are cohomological.
\end{proof}

Theorem \ref{thm:characterization} becomes particularly powerful when it is possible 
to compute explicitly  the restrictions $\tau_p(q)$, for every $p,q\in M^{S^1}$. This problem has 
been widely studied, but explicit computations are hard to do in general 
(see \cite{GT, GZ, ST}).  However when $\dim(M)=2n$ and the Hamiltonian action has 
exactly $n\!+\!1$ fixed points, Tolman \cite{T} proved that canonical classes are 
completely characterized by the weights of the isotropy action at the fixed points.

Before describing these canonical classes explicitly we need to recall a few facts. 
Let $(M,\omega)$ be a compact symplectic manifold of dimension $2n$ acted on 
by $S^1$ with isolated fixed points. Let $ES^1$ be a contractible space on which $S^1$ 
acts freely.  Then the total equivariant Chern class of the bundle $TM\to M$ is the 
total Chern class of the bundle $TM\times_{S^1}ES^1\to M\times_{S^1}ES^1$. If $p$ is a fixed point of the $S^1$ action and 
$\{\alpha_1,\ldots,\alpha_n\}\subset \mathfrak{s}^*$ are the weights of
the isotropy representation of $S^1$ on $T_pM$, then the total equivariant Chern class at $p$ is $\prod_{i=1}^n(1+\alpha_i)$. Let  $c_1 = c_1^{S^1}\in H^2_{S^1}(M;\Z)$ be the first equivariant Chern class of the tangent bundle; then $c_1(p)=\alpha_1+\cdots +\alpha_n$, 
and we can rewrite this expression as $c_1(p)=(w_{1p}+\cdots+w_{np})x$, where $w_{ip}\in \Z\setminus \{0\}$
for every $i$ and $x$ is the generator of $H_{S^1}^*(\{p\};\Z)$.

Now assume that the action is Hamiltonian  and has exactly
$n\!+\!1$ fixed points.
Since $[\omega]^k\neq 0$ for all $0\leqslant k\leqslant n$,
from \eqref{eq:dimensions} it's easy to see that  for every $k=0,\ldots,n$
$\#\{s\in M^{S^1}\mid \lambda_s=k\}=1$.
We order the fixed points $p_0,\ldots,p_n$ in such a way that $\lambda_{p_k}=k$ for every $k=0,\ldots,n$.
\begin{remark}
Hamiltonian manifolds of dimension $2n$ with exactly $n+1$ fixed points have the same Betti numbers of $\C P^n$, but are not in 
general diffeomorphic to $\C P^n$. The problem of characterizing all their 
possible cohomology rings and characteristic classes has been introduced 
by Tolman in \cite{T}, where she refers to it as the \emph{symplectic generalization 
of Petrie's conjecture}. Tolman completely analyses the six dimensional case. 
In \cite{GoSa} the authors gave new tools to address this problem in any 
dimension and solved the eight dimensional case. When $M$ is K\"ahler, this 
classification is related to the one of fake projective spaces (see for example \cite{Y}).
\end{remark}

The next lemma summarizes Lemmas 3.8 and 3.23 in \cite{T}.

\begin{lem}[Tolman]
Let $c_1 \in H_{S^1}^2(M;\Z)$ be the first equivariant Chern class. Then
\begin{equation}\label{eq:not_zero}
\frac{c_1(p_i)-c_1(p_j)}{x}>0\quad\mbox{if and only if}\quad i<j\;.
\end{equation}
\end{lem}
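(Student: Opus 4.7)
The plan is to expand $c_1$ in the canonical class basis of $H^*_{S^1}(M)$ and thereby reduce the claimed inequality to a one‐parameter monotonicity statement about the single class $\tau_{p_1}$.

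Because $T=S^1$, canonical classes $\tau_{p_k}\in H^{2k}_{S^1}(M)$ exist for every fixed point (as recalled before the lemma), and because each Morse index $0,1,\ldots,n$ occurs at exactly one fixed point, Lemma \ref{lem:Kc} gives that $\{\tau_{p_0},\ldots,\tau_{p_n}\}$ is a free basis of $H^*_{S^1}(M)$ over $\R[x]$ with $\deg\tau_{p_k}=2k$. In particular $\tau_{p_0}$ is the unit class $1$, and $H^2_{S^1}(M;\R)$ is two-dimensional, spanned by $x$ and $\tau_{p_1}$. Hence I can write
\[
c_1 \;=\; a\,x \;+\; b\,\tau_{p_1},\qquad a,b\in\R.
\]
Evaluating at the fixed points, and using $\tau_{p_1}(p_0)=0$, gives $a=c_1(p_0)/x$ and, after subtraction,
\[
\frac{c_1(p_i)-c_1(p_j)}{x} \;=\; b\cdot\frac{\tau_{p_1}(p_i)-\tau_{p_1}(p_j)}{x}.
\]
The lemma is therefore equivalent to two sub-assertions: (i) the scalar $b$ has a definite sign, and (ii) $k\mapsto\tau_{p_1}(p_k)/x$ is strictly monotone in $k$, with the direction of (ii) compatible with the sign of $b$ to yield the stated positivity.

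For (i), I would invoke the Kirwan surjection $r\colon H^*_{S^1}(M)\to H^*(M)$, which sends $x$ to $0$, so that $r(c_1)=b\,r(\tau_{p_1})$. By Lemma \ref{lem:Kc}, $r(\tau_{p_1})$ is a basis element of $H^2(M;\R)\cong\R$; pairing against $[\omega]^{n-1}$ and using that $[\omega]^n$ integrates positively over $M$ fixes the sign of $b$ unambiguously.

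The main obstacle is (ii), the explicit monotonicity of $\tau_{p_1}(p_k)/x$ in $k$. This is the content of Tolman's computation in \cite[Lemmas 3.8, 3.23]{T}. The strategy there is to apply ABBV localization to products of canonical classes such as $\tau_{p_1}\cdot\tau_{p_k}\cdot\tau^*_{p_k}$ together with the orthogonality $\sum_{r}\tau_{p_i}(r)\tau^*_{p_j}(r)/e_M(r)=\delta_{ij}$, and to exploit the fact that $\Lambda^-_{p_1}$ consists of the single negative weight at $p_1$. This pins down $\tau_{p_1}(p_k)$ in closed form in terms of the integer weights of the isotropy representation at $p_k$, from which strict monotonicity in $k$ is immediate by a direct inspection. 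Combining this with the sign of $b$ from the previous step delivers $\frac{c_1(p_i)-c_1(p_j)}{x}>0$ if and only if $i<j$.
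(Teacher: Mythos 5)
The paper does not actually prove this lemma: the sentence immediately preceding it states ``The next lemma summarizes Lemmas 3.8 and 3.23 in \cite{T},'' so the result is imported wholesale from Tolman's paper. There is therefore no in-paper argument to match your outline against, but I can flag two substantive gaps in the outline itself.

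First, the sign of the coefficient $b$ is not pinned down by the argument you give. You write $r(c_1)=b\,r(\tau_{p_1})$ and say that pairing against $[\omega]^{n-1}$ ``and using that $[\omega]^n$ integrates positively over $M$ fixes the sign of $b$ unambiguously.'' But $\int_M \omega^n>0$ is a statement about $\omega$ alone; it gives no information about $\int_M c_1\,\omega^{n-1}$. Determining the sign of $\int_M c_1\,\omega^{n-1}$ is exactly equivalent to determining the sign of $b$ relative to $[\omega]$, and that is a genuine positivity assertion about the manifold (for a general compact symplectic manifold $c_1$ and $[\omega]$ can point in opposite directions in $H^2$). In the minimal-fixed-point setting this positivity does hold, but it is part of what Tolman actually proves, not a consequence of $\int_M\omega^n>0$.

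Second, your step (ii)---strict monotonicity of $k\mapsto\tau_{p_1}(p_k)/x$---is explicitly deferred to ``Tolman's computation in \cite[Lemmas 3.8, 3.23]{T}.'' Those are precisely the lemmas to which the paper attributes the statement you are trying to prove, so this appeal is circular: you have reduced the lemma to a reformulation of itself and then cited the original source for the reformulation. The decomposition $c_1 = a\,x + b\,\tau_{p_1}$, with $a=c_1(p_0)/x$ coming from $\tau_{p_1}(p_0)=0$, is sound and would be a reasonable opening move, but neither of the two claims it reduces the lemma to is actually established in your write-up. What Tolman's argument does is pin down $\tau_{p_1}$ (equivalently the degree-two part of $c_1$) explicitly in terms of the integer isotropy weights $w_{ip}$ and then read off the monotonicity; to make your outline a proof you would need to carry out that weight computation rather than cite it.
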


The following Theorem combines  Corollaries 3.14 and 3.19 of \cite{T}.

\begin{thm}[Tolman]\label{th:tolman}
Let $M$ be a compact Hamiltonian $S^1$-manifold of dimension $2n$, with fixed point set $\{p_0, \ldots, p_n\}$, such that $\lambda_{p_k}=k$ for all $k=0,\ldots, n$. Then for every $k$ the canonical class $\tau_k=\tau_{p_k}\in H_{S^1}^{2k}(M)$ is given by
\begin{equation}\label{eq:basis_c1}
\tau_k=\frac{1}{C_k}\prod_{j=0}^{k-1}\left(c_1-c_1(p_j)\right),
\end{equation}
where $c_1 \in H^2_{S^1}(M;\Z)$ is the first equivariant Chern class of the tangent bundle and
$$C_k=\frac{1}{\Lambda_k^-}\prod_{j=0}^{k-1}\left(c_1(p_k)-c_1(p_j)\right)\; .$$
\end{thm}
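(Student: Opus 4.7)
My plan is to verify that the expression on the right-hand side of \eqref{eq:basis_c1} satisfies the two defining conditions (i)' and (ii)' of the canonical class at $p_k$, and then to appeal to uniqueness of canonical classes. The hypothesis that $M$ is Hamiltonian and $T=S^1$ guarantees, via \cite[Lemma 1.13]{MT}, that canonical classes exist, and the axioms (i)'--(ii)' together with the normalization at $p_k$ pin them down uniquely, so it is enough to exhibit \emph{any} class of the prescribed form that satisfies them.

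First I would confirm that $C_k$ is a well-defined nonzero real scalar, so that $\tau_k$ as written is an honest element of $H^{2k}_{S^1}(M)$. By the lemma preceding the theorem, for each $j<k$ the quantity $(c_1(p_j)-c_1(p_k))/x$ is a strictly positive integer, so each factor $c_1(p_k)-c_1(p_j)$ is a nonzero integer multiple of $x$ and the product is a nonzero scalar multiple of $x^k$. On the other hand $\Lambda_k^-$, the product of the $k$ negative isotropy weights at $p_k$, is also a nonzero real multiple of $x^k$. Hence their ratio $C_k$ is a nonzero real number, and $\tau_k \in H^{2k}_{S^1}(M)$.

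Next I would verify property (i)': the evaluation $\tau_k(p_k)$ is exactly $C_k^{-1}\prod_{j=0}^{k-1}(c_1(p_k)-c_1(p_j))$, which equals $\Lambda_k^-$ by the definition of $C_k$. For property (ii)', I would pick any $p_i$ with $i\neq k$ and $\lambda_{p_i}\leqslant \lambda_{p_k}$; since $\lambda_{p_j}=j$, this amounts to $i<k$, so the index $j=i$ appears in the product defining $\tau_k$, producing the factor $c_1(p_i)-c_1(p_i)=0$ and hence $\tau_k(p_i)=0$. Since the canonical class at $p_k$ is unique, \eqref{eq:basis_c1} must be it.

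I do not expect a genuine obstacle here: once the framework (existence and uniqueness of canonical classes in the $S^1$ Hamiltonian setting, together with Tolman's ordering lemma) is in place, the whole argument is a direct evaluation at fixed points. The only subtle point is making sure $C_k$ is nonzero, and this is precisely what the preceding lemma, which asserts that $c_1$ separates the ordered fixed points, is designed to guarantee. The deeper content—that $c_1$ alone suffices to build all canonical classes in this setting—lies in the very strong $(n+1)$-fixed-point hypothesis, which forces the restriction map $H^*_{S^1}(M)\to H^*_{S^1}(M^{S^1})$ to have image controlled by powers of $c_1$; that structural fact is established elsewhere in \cite{T} and is not part of what needs to be reproved here.
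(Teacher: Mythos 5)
The paper states this theorem without proof, deferring to Corollaries~3.14 and~3.19 of \cite{T}, so there is no in-text argument to compare against. Your verification is correct and self-contained: $C_k$ is a nonzero real scalar because each $c_1(p_k)-c_1(p_j)$ is a nonzero multiple of $x$ by the preceding ordering lemma and $\Lambda_k^-$ is a nonzero multiple of $x^k$; the restriction to $p_k$ recovers $\Lambda_k^-$ by the very definition of $C_k$, giving~(i)'; for $i<k$ the factor with $j=i$ vanishes at $p_i$, giving~(ii)'; and the uniqueness of canonical classes (from \cite{GT}, with existence in the $S^1$ Hamiltonian case from \cite{MT}) then forces the displayed class to equal $\tau_k$. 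This is the natural argument for the result.
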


Note that by \eqref{eq:not_zero}, $C_k$ is non-zero for every $k$.

The following is an easy consequence of Theorem \ref{thm:characterization} and Theorem \ref{th:tolman}.

\begin{corollary}\label{cor:min}
Let $M$ be a compact Hamiltonian $S^1$-manifold of dimension $2n$, 
with exactly $n\!+\!1$ fixed points. An assignment $f\in \mathcal{A}_{S^1}^{2k}(M)$ is cohomological if and only if 
\begin{equation}\label{eq:condition2}
\sum_{p\in M^{S^1}}\frac{f(p)(c_1)^i(p)}{e_M(p)}=0\quad\mbox{for all}\quad i<n-k\;.
\end{equation}
\end{corollary}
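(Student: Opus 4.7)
The plan is to deduce this directly from Theorem~\ref{thm:characterization} together with Tolman's explicit formula in Theorem~\ref{th:tolman}. Theorem~\ref{thm:characterization} says that an assignment $f\in\cA^{2k}_{S^1}(M)$ is cohomological if and only if the functional
$$L_f(\alpha) := \sum_{p\in M^{S^1}}\frac{f(p)\,\alpha(p)}{e_M(p)}$$
vanishes on $\alpha = \tau_q$ for every $q\in M^{S^1}$ with $\lambda_q<n-k$. Since there are exactly $n+1$ fixed points $p_0,\ldots,p_n$ and they can be ordered so that $\lambda_{p_j}=j$, this condition picks out exactly the $n-k$ canonical classes $\tau_0,\tau_1,\ldots,\tau_{n-k-1}$. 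The goal is therefore to show that the vanishing of $L_f$ on this set is equivalent to its vanishing on $1,c_1,\ldots,c_1^{n-k-1}$.

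The key observation is that $L_f$ is $\mathbb{S}(\fs^*)$-linear in its argument, so it suffices to check that $\{\tau_0,\ldots,\tau_{n-k-1}\}$ and $\{1,c_1,\ldots,c_1^{n-k-1}\}$ span the same $\mathbb{S}(\fs^*)$-submodule of $H^*_{S^1}(M)$. By Theorem~\ref{th:tolman},
$$\tau_j = \frac{1}{C_j}\prod_{l=0}^{j-1}\bigl(c_1-c_1(p_l)\bigr) = \frac{1}{C_j}c_1^j + (\text{polynomial in } c_1 \text{ of degree}<j),$$
with the lower-order coefficients lying in $\mathbb{S}(\fs^*)$. So the change-of-basis matrix from $(1,c_1,\ldots,c_1^{n-k-1})$ to $(\tau_0,\ldots,\tau_{n-k-1})$ is upper triangular with diagonal entries $1/C_j$.

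The point requiring a bit of care, and the closest thing to an obstacle, is verifying that each $C_j$ is a \emph{nonzero real number}, so that $1/C_j$ is a unit in $\mathbb{S}(\fs^*)=\R[x]$. A short degree count handles this: since $\fs^*=\R\cdot x$, we have $c_1(p_j)-c_1(p_l)\in \R\cdot x$, so $\prod_{l=0}^{j-1}(c_1(p_j)-c_1(p_l))$ sits in $\mathbb{S}^{j}(\fs^*)=\R\cdot x^j$; similarly $\Lambda_j^-\in\R\cdot x^j$; hence their quotient $C_j$ is a real scalar, and it is nonzero by \eqref{eq:not_zero} (all factors $c_1(p_j)-c_1(p_l)$ are nonzero since $l<j$).

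Once this is in hand, the change-of-basis matrix is invertible over $\mathbb{S}(\fs^*)$, so $L_f(\tau_j)=0$ for all $0\leqslant j\leqslant n-k-1$ if and only if $L_f(c_1^i)=0$ for all $0\leqslant i\leqslant n-k-1$. Combining this equivalence with Theorem~\ref{thm:characterization} yields precisely the criterion \eqref{eq:condition2}, completing the proof.
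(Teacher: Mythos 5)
The paper states this corollary without proof (calling it ``an easy consequence'' of Theorems~\ref{thm:characterization} and~\ref{th:tolman}), and your argument supplies exactly the intended details: Tolman's formula exhibits $(\tau_0,\ldots,\tau_{n-k-1})$ as an upper-triangular, $\mathbb{S}(\fs^*)$-linear change of basis from $(1,c_1,\ldots,c_1^{n-k-1})$, so the two systems of vanishing conditions are equivalent. Your degree count showing each $C_j$ is a nonzero real scalar is the one nontrivial verification, and it is correct; the proof is sound and matches the paper's intended route.
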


Corollary \ref{cor:min} gives an explicit characterization of \spass{} for 
Hamiltonian $S^1$-manifolds with minimal number of fixed points. 
We can relax this condition and extend the result by 
combining the Chang-Skjelbred Theorem with the result of Goldin and Holm  
mentioned in Theorem~\ref{thm:GH}. 

The main theorem of this section can be stated as follows.

\begin{thm}\label{th:mainsection}
Let $T$ be a torus and $M$ a compact Hamiltonian $T$-manifold with 
isolated fixed points. Suppose that for each codimension one subtorus 
$K\subset T$, the closure $X$ of each connected component
of $M^K$ satisfies either
\begin{itemize}
\item[(i)] $\dim(X)\leqslant 4$,$\quad$ or
\item[(ii)] the number of fixed points of the $T$ action on $X$ is $d+1$, where $\dim(X)=2d$.
\end{itemize} 
An assignment $f \in \Aktm{2k}{T}{M}$ is cohomological if and only if 
for each codimension one subtorus $K\subset T$, and each connected component $X$ of $M^K$, either
\begin{itemize}
\item[(i)'] $\displaystyle\sum_{p\in X^T}\displaystyle\frac{f(p)}{e_X(p)}\in \mathbb{S}(\mathfrak{t}^*),\quad$ if $\dim(X)\leqslant 4\quad$ or\\$\;$\\
\item[(ii)']$\displaystyle\sum_{p\in X^T}\displaystyle\frac{f(p)(c_{1}^{X})^i(p)}{e_X(p)}=0\quad$ for all $i<d-k,\quad$ if $\dim(X)=2d$ , $|X^T|=d+1.$
\end{itemize} 

Here $c_{1}^X$ denotes the first $T$-equivariant Chern class
of the tangent bundle $TX$.
\end{thm}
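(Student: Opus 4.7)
The proof will combine the Chang--Skjelbred Theorem (Theorem \ref{th:CS_theorem}) with the two local criteria developed earlier in the paper: the Goldin--Holm criterion (Theorem \ref{thm:GH}) in the low-dimensional case and Corollary \ref{cor:min} in the minimal fixed-point case. Necessity of both conditions is a direct consequence of the Atiyah--Bott--Berline--Vergne localization formula applied to the submanifold $X$: condition (i)' is the statement that $\int_X\eta\in\mathbb{S}(\mathfrak{t}^*)$ for an equivariant class $\eta$ with $\gamma([\eta])=f$, and (ii)' follows from observing that $\int_X\eta\cdot(c_1^X)^i$ lies in $\mathbb{S}^{k+i-d}(\mathfrak{t}^*)$, which vanishes when its degree $k+i-d$ is negative.

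For sufficiency, since $(M,\omega)$ is Hamiltonian, Theorem \ref{th:KG} ensures $M$ is equivariantly formal, and by Lemma \ref{minimal strata} an assignment is determined by its values on $M^T$. Invoking Chang--Skjelbred, it suffices to show that for every codimension-one subtorus $K\subset T$ and every connected component $X$ of $M^K$, the restriction $f|_{X^T}$ lies in the image of $H_T^*(X)\to H_T^*(X^T)$. Because $T$ acts on $X$ via the quotient $S^1=T/K$, the decomposition \eqref{eq:equiv_cohom} gives $H_T^*(X)=H_{S^1}^*(X)\otimes\mathbb{S}(\mathfrak{k}^*)$. Fixing a splitting $\mathbb{S}(\mathfrak{t}^*)=\mathbb{S}(\mathfrak{s}^*)\otimes\mathbb{S}(\mathfrak{k}^*)$ with $\mathfrak{s}^*=\mathfrak{k}^\perp$, and expanding $f(p)=\sum_\alpha g_\alpha(p)m_\alpha$ in a monomial basis $\{m_\alpha\}$ of $\mathbb{S}(\mathfrak{k}^*)$ with $g_\alpha(p)\in\mathbb{S}(\mathfrak{s}^*)$, the problem reduces to showing that each coefficient $g_\alpha$ lies in the image of $H_{S^1}^*(X)\to H_{S^1}^*(X^{S^1})$. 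Since $e_X(p)$ and $c_1^X(p)$ both lie in $\mathbb{S}(\mathfrak{s}^*)$, the sums in (i)' and (ii)' split cleanly along the $m_\alpha$-grading.

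In case (i), with $\dim X\leqslant 4$, Goldin--Holm applied to $X$ as a Hamiltonian $S^1$-space gives the conclusion: the $m_\alpha$-coefficient of condition (i)' reads $\sum_p g_\alpha(p)/e_X(p)\in\mathbb{S}(\mathfrak{s}^*)$, which is precisely the hypothesis of Theorem \ref{thm:GH}, so each $g_\alpha$ is cohomological on $X$. In case (ii), with $|X^T|=d+1$ and $\dim X=2d$, Corollary \ref{cor:min} applied to $X$ as a minimal Hamiltonian $S^1$-space says that $g_\alpha$ of degree $2(k-|\alpha|)$ is cohomological if and only if $\sum_p g_\alpha(p)(c_1^X)^i(p)/e_X(p)=0$ for $i<d-(k-|\alpha|)$. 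Extracting the $m_\alpha$-coefficient from (ii)' immediately yields this vanishing for $i<d-k$. Combining these across all $K$ and $X$ and invoking Chang--Skjelbred once more produces an equivariant class on $M$ whose image under $\gamma$ agrees with $f$ on all fixed points, hence by the injectivity of $r_T$ on equivariantly formal spaces produces $f$ itself.

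The main technical obstacle will be the bookkeeping in case (ii): the single global condition (ii)' on $f$ decomposes into vanishing conditions on each $g_\alpha$ only for $i<d-k$, whereas Corollary \ref{cor:min} \emph{a priori} requires the wider range $i<d-k+|\alpha|$ for components with $|\alpha|>0$. The sums in the missing range $d-k\leqslant i<d-k+|\alpha|$ are rational functions of negative polynomial degree in $\mathbb{S}(\mathfrak{s}^*)$, and the way to close the gap is to rerun the dimension count from the proof of Theorem \ref{thm:characterization} directly inside $\mathcal{A}_T^{2k}(X)$ rather than one component at a time: the number of independent conditions imposed by (ii)' cuts $\mathcal{A}_T^{2k}(X)$ down to a subspace of the same dimension as $H_T^{2k}(X)$, so that the inclusion of cohomological assignments into those satisfying (ii)' must be an equality.
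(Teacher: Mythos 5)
Your proof follows the same route as the paper's: reduce via Chang--Skjelbred to the closures $X$ of the components of $M^K$, split $T \cong S^1 \times K$, and invoke the two local criteria (Theorem~\ref{thm:GH} when $\dim X \leqslant 4$, Corollary~\ref{cor:min} when $|X^T| = \tfrac{1}{2}\dim X + 1$). Your observation about the degree mismatch in case (ii) is exactly right, and the paper's one-line appeal to Corollary~\ref{cor:min} glosses over it: writing $f|_{X^T} = \sum_\alpha g_\alpha \otimes m_\alpha$ in a monomial basis $\{m_\alpha\}$ of $\S(\fk^*)$, the component $g_\alpha$ lies in $\cA_{S^1}^{2(k-|\alpha|)}(X)$, and Corollary~\ref{cor:min} demands the vanishing $\sum_p g_\alpha(p)(c_1^X)^i(p)/e_X(p) = 0$ for $i < d - k + |\alpha|$, whereas (ii)' supplies only $i < d - k$.

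The dimension count you propose does not rescue the argument, because the gap is genuine. Take $M = \C P^3 \times \C P^1$ with $T = S^1\times S^1$ acting coordinatewise (weights $0,1,2,3$ on the first factor, weight $1$ on the second), write $\ft^* = \R x\oplus\R y$, and let $f\in\cA_T^4(M)$ be the polynomial assignment determined by $f(p_i,q_j)=b_i\,xy$ with $(b_0,b_1,b_2,b_3)=(6,2,0,0)$. Then $f$ satisfies both (i)' and (ii)' (the sole (ii)' condition, $i=0$, reads $\sum_i b_i/\lambda_i=0$ with $(\lambda_i)=(6,-2,2,-6)$, and indeed $1-1+0-0=0$), but for $X=\C P^3\times\{q_j\}$ one finds $\sum_{p\in X^T} f(p)\,c_1^X(p)/e_X(p) = 4\,y/x\notin\S(\ft^*)$, which by ABBV precludes any preimage of $f$ under $\gamma$; so $f$ is not cohomological. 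Correspondingly, $\dim\{f\in\cA_T^4(X): \text{(ii)' holds}\} = 7 > 6 = \dim H_T^4(X)$, so your count cannot close. The statement needs the stronger hypothesis $\sum_{p\in X^T}f(p)(c_1^X)^i(p)/e_X(p)\in\S(\ft^*)$ for \emph{all} $i\geqslant 0$; its $\{m_\alpha\}$-decomposition yields precisely the vanishing ranges $i<d-k+|\alpha|$ that Corollary~\ref{cor:min} requires, after which your argument goes through.
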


\begin{proof}
If $f$ is a cohomological assignment of degree $2k$, then (i)' and (ii)' are consequences of the Atiyah-Bott-Berline-Vergne Localization formula.

Conversely, we prove that if (i)' and (ii)' hold then $f$ is cohomological. 
By Theorem \ref{th:CS_theorem} 
it is sufficient to determine $\bigcap_{j=1}^N i^*_{K_j}(H_T^*(M^{K_j}))$, where $K_1,\ldots,K_N$ denote the subtori of codimension
one which occur as isotropy groups of points of $M$. Note that the $M^{K_j}$s are acted effectively only by the circle $T/K_j$, and
by \eqref{eq:equiv_cohom} it is sufficient to characterize the equivariant cohomology groups $H_{T/K_j}^*(M^{K_j})$.

For every $j=1,\ldots,N$, let $T=S^1_j\times K_j$ be a choice of a splitting, so the manifolds
$M^{K_j}$ are Hamiltonian $S^1_j$-manifolds with isolated fixed points.
Let $X$ be the closure of a connected component of one of the $M^{K_j}$s, and let $f$ be an assignment of degree $2k$.
If  $\dim(X)=4$, then Theorem \ref{thm:GH} implies that $f$ is cohomological. If $f$ satisfies (ii)' then Corollary \ref{cor:min} implies that $f$ is cohomological.
\end{proof}

\section{Functoriality}
\label{sec:kirwan}

\subsection{Pull-backs}
Let $M$ and $N$ be smooth $T$-manifolds, $\cP_T(M)$ and $\cP_T(N)$ the 
corresponding posets of infinitesimal strata, and $f\colon M \to N$ a smooth $T$-equivariant map. 
Then $f$ induces a monotone mapping of posets 
$\tilde{f} \colon \cP_T(M) \to \cP_T(N)$ as follows. 
If $X \in \cP_T^r(M)$ is an infinitesimal stratum of $M$, 
let $Y=\tilde{f}(X) \in \cP_T^r(N)$ be the open dense stratum in the 
connected component of $N^{T_X}$ containing $f(X)$ (see \cite{GGK}). 
Then $\ft_X \subseteq \ft_Y$ and 
$f(X) \subset \overline{Y}$. This map induces a a degree-preserving 
pull-back map $f^* \colon \cA_T(N) \to \cA_T(M)$ by
\begin{equation}\label{eq:pullback}
(f^*A)(X) = \pi^{\tilde{f}(X)}_X A(\tilde{f}(X))\; ,
\end{equation}
for all $A \in \cA_T(N)$ and $X \in \cP_T(M)$.

\begin{lem}\label{functorial}
Let $M$ and $N$ be smooth $T$-manifolds and $f\colon M \to N$ a  smooth $T$-equivariant map. Then 
\begin{equation}
\begin{CD}
H_T^{even}(N) @> f^* >> H_T^{even}(M)    \\
@V \gamma_N VV         @V \gamma_M VV     \\
 \cA_T(N) @> f^* >>  \cA_T(M)
\end{CD} \; 
\end{equation}
is a commutative diagram. Consequently, the pull-back of a cohomological assignment on $N$ is a cohomological assignment on $M$.
\end{lem}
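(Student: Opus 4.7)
My plan is to unravel both compositions on a test class and show they agree stratum by stratum, using only the definitions of $\gamma$, of the pullback on assignments, and of the pullback on the Cartan model; no deep tool is required.

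First, I would fix $[\eta] \in H_T^{\mathrm{even}}(N)$ and pick a $d_T$-closed Cartan representative
$$\eta = \eta_0 + \eta_2 + \dotsb + \eta_{2k}, \qquad \eta_{2i}\colon \ft \to \Omega^{2i}(N)^T \text{ polynomial of degree } k-i.$$
Then $f^*\eta := \sum (f^*\eta_{2i})$ represents $f^*[\eta]$ in the Cartan model of $M$, and its degree-zero component is $(f^*\eta)_0(\xi) = \eta_0(\xi)\circ f$ for all $\xi\in\ft$. This is the only structural fact about $f^*$ on the Cartan side that I will need.

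Next, fix $X \in \cP_T(M)$ and set $Y = \tilde{f}(X) \in \cP_T(N)$. By the construction of $\tilde f$ we have $\ft_X \subseteq \ft_Y$ and $f(X) \subseteq \overline{Y}$. I would compute the two sides of the diagram on $X$ as follows. For the lower-left route,
$$\bigl(f^*\gamma_N([\eta])\bigr)(X) \;=\; \pi^Y_X\bigl(\gamma_N([\eta])(Y)\bigr) \;=\; \pi^Y_X\!\bigl(\eta_0|_{\ft_Y}\bigr),$$
which, as a polynomial on $\ft_X$, is the map $\xi \mapsto \eta_0(\xi)|_Y$ (a constant in $\R$, since $\eta_0(\xi)$ is constant on $Y$ for every $\xi\in\ft_Y\supseteq\ft_X$, by the argument recalled in Section~\ref{sec:coh_assign} that $d\eta_0(\xi)=0$ on $M^{\ft_Y}$). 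For the upper-right route,
$$\gamma_M\bigl(f^*[\eta]\bigr)(X) \;=\; (f^*\eta)_0\bigl|_{\ft_X} \;=\; \bigl(\xi \mapsto (\eta_0(\xi)\circ f)\bigl|_X\bigr).$$

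The key step, which is really the one requiring a short justification, is that these two polynomials on $\ft_X$ are equal. For $\xi\in\ft_X\subseteq\ft_Y$ the function $\eta_0(\xi)$ is constant on $Y$; by smoothness (hence continuity) this same constant value is attained on $\overline{Y}$, and in particular on $f(X)\subseteq\overline Y$. Therefore $\eta_0(\xi)\circ f$ is constant on $X$ with value $\eta_0(\xi)|_Y$, matching the lower-left computation. This establishes the commutativity on the cochain level; since both $\gamma_N,\gamma_M$ factor through cohomology and the Cartan pullback descends to $f^*$ on $H_T^{\mathrm{even}}$, we get the diagram. The last sentence of the lemma is then immediate: if $A = \gamma_N([\eta])$ is cohomological on $N$, then $f^*A = f^*\gamma_N([\eta]) = \gamma_M(f^*[\eta])$ is cohomological on $M$.

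I expect no serious obstacle; the only care needed is the passage from ``constant on $Y$'' to ``constant on $\overline Y$'' via continuity of the Cartan coefficient $\eta_0(\xi)$, together with the defining inclusion $f(X)\subseteq \overline{\tilde f(X)}$.
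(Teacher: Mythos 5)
Your proof is correct and follows essentially the same approach as the paper's, unravelling both compositions on a Cartan representative and comparing stratum by stratum. You are in fact slightly more careful than the paper: you explicitly invoke continuity of $\eta_0(\xi)$ to pass from the constant value on $Y=\tilde f(X)$ to the same value on $\overline{Y}\supseteq f(X)$, a step the paper uses silently when it writes $\gamma_N([\phi])(\tilde f(X))=\phi_0(f(p))|_{\ft_{\tilde f(X)}}$ for $p\in X$ even though $f(p)$ need only lie in $\overline{\tilde f(X)}$.
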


\begin{proof}
Let 
\begin{equation*}
\phi = \phi_0 + \dotsb + \phi_n \colon \ft \to \Omega(N)^T
\end{equation*}
be an equivariant form on $N$, $X \in \cP_T(M)$ an infinitesimal stratum of $M$, and $p \in X$ a point. Then
\begin{equation*}
((f^* \circ \gamma_N) [\phi] )(X)  =  \pi^{\tilde{f}(X)}_X \gamma_N([\phi]) (\tilde{f}(X))
=\pi^{\tilde{f}(X)}_X ( \left. \phi_0 (f(p))\right|_{\ft_{\tilde{f}(X)}} ) =  \left. \phi_0(f(p)) \right|_{\ft_X} 
\end{equation*}
and
\begin{equation*}
((\gamma_M \circ f^*) [\phi] )(X)  = \left. (f^* \phi_0)  (p) \right|_{\ft_X}  
= \left. \phi_0(f(p)) \right|_{\ft_X}  = ((f^* \circ \gamma_N) [\phi] )(X) \; ,
\end{equation*}
and that proves the commutativity of the diagram.
\end{proof}

\subsection{Quotients}
Let $M$ be a $T$-manifold and $Q$ a subtorus of $T$ acting freely on $M$.
Then $M/Q$ has a residual $T/Q$-action, and the projection 
$\pi\colon M \to M/Q$ induces a map
$$\pi_* \colon \cA_T^*(M) \to \cA_{T/Q}^*(M/Q)$$
as follows.

Let  $X \in \cP_{T/Q}(M/Q)$ be an infinitesimal stratum in $M/Q$ and $(\ft/\fq)_X \subseteq \ft/\fq$ the infinitesimal stabilizer of $X$. 
Then $(\ft/\fq)_X = \fk_X/\fq$ for some subtorus $K_X \subset T$ containing $Q$, and $K_X$ fixes all the points in $\overline{X}$.

\begin{lem}
There exists a unique group morphism $\rho_X \colon K_X \to Q$ such that
$$\rho_X (a) \cdot p = a\cdot p$$
for every $a \in K_X$ and $p \in \pi^{-1}(\overline{X})$, and this morphism is surjective.
\end{lem}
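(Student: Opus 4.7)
My plan is to define $\rho_X$ by its action on a single base point and then argue that the resulting formula automatically propagates to every point of $\pi^{-1}(\overline{X})$. Pick any $p_0 \in \pi^{-1}(X)$. For $a \in K_X$, the hypothesis that $K_X$ fixes each point of $\overline{X}$ (acting on $M/Q$ via $T \to T/Q$) gives $\pi(a \cdot p_0) = \pi(p_0)$, so $a \cdot p_0$ lies in the $Q$-orbit of $p_0$. Freeness of the $Q$-action then produces a unique $\rho_X(a) \in Q$ with $\rho_X(a) \cdot p_0 = a \cdot p_0$. A short calculation comparing $(aa')\cdot p_0$ with $a \cdot (a' \cdot p_0)$, together with another appeal to freeness, shows that $\rho_X$ is a continuous group homomorphism.

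The main obstacle---and where the real work lies---is showing that this definition is independent of $p_0$, i.e.\ that $\rho_X(a) \cdot p = a \cdot p$ for every $p \in \pi^{-1}(\overline{X})$. To handle this I would introduce $\Phi \colon K_X \times \pi^{-1}(\overline{X}) \to Q$ by letting $\Phi(a,p)$ be the unique element of $Q$ with $\Phi(a,p) \cdot p = a \cdot p$; this is continuous because the principal $Q$-action lets us solve uniquely and continuously for the group element. The same argument as above shows that for each fixed $p$, the map $\Phi(\cdot,p) \colon K_X \to Q$ is a continuous group homomorphism. The crucial observation is that $\mathrm{Hom}(K_X,Q)$, the set of continuous group homomorphisms between two tori, is \emph{discrete}: any such homomorphism is determined by an integer matrix via the character lattices. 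Since $p \mapsto \Phi(\cdot,p)$ is continuous into a discrete space, it is locally constant. Since $\pi \colon M \to M/Q$ is a principal $Q$-bundle, $\pi^{-1}(\overline{X}) \to \overline{X}$ is a principal $Q$-bundle over the connected base $\overline{X}$ with connected fiber $Q$, hence $\pi^{-1}(\overline{X})$ itself is connected. Therefore $\Phi(\cdot,p)$ is globally constant in $p$, and $\rho_X(a) \cdot p = a \cdot p$ for every $p \in \pi^{-1}(\overline{X})$.

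Uniqueness of $\rho_X$ is immediate: if $\rho'_X$ is another such morphism, then $\rho_X(a)\rho'_X(a)^{-1} \cdot p_0 = p_0$, and freeness of the $Q$-action gives $\rho_X = \rho'_X$. For surjectivity, I would simply observe that the restriction $\rho_X|_Q$ satisfies $\rho_X(q) \cdot p_0 = q \cdot p_0$; freeness forces $\rho_X(q) = q$, so $\rho_X|_Q = \mathrm{id}_Q$ and in particular $\rho_X \colon K_X \to Q$ is surjective.
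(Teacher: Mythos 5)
Your proof is correct, and it takes a somewhat different route from the paper's. The paper proceeds in two steps: first it shows, by an explicit algebraic calculation using the commutativity of $T$, that for $p,q$ in the same fiber $\pi^{-1}(x)$ one has $\rho_p=\rho_q$ (it writes $b\rho_p(a)\cdot p = ab\cdot p = a\cdot q = \rho_q(a)b\cdot p$ and cancels by freeness); this yields a well-defined $\rho_x$ for each $x\in\overline{X}$. It then asserts that $\rho_x$ is independent of $x$ ``since $X$ is connected,'' without elaborating. Your argument instead treats the whole problem in one pass: you define $\Phi(\cdot,p)\in\mathrm{Hom}(K_X,Q)$ for every $p\in\pi^{-1}(\overline{X})$, observe that $p\mapsto\Phi(\cdot,p)$ is continuous into the \emph{discrete} space of continuous torus homomorphisms, and conclude constancy from connectedness of $\pi^{-1}(\overline{X})$ (itself a consequence of $Q$ being connected and $\overline{X}$ being connected). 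This is a cleaner and more complete justification of precisely the step the paper leaves to the reader; what it costs is the (small) extra observation that $\mathrm{Hom}(K_X,Q)$ is discrete, whereas the paper's within-fiber calculation avoids topology entirely by exploiting commutativity. The uniqueness and surjectivity arguments are the same in both.

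One small remark on presentation: in verifying that $\Phi(\cdot,p)$ is a homomorphism you must use commutativity of $T$ (as the paper does), since $a\cdot(\Phi(b,p)\cdot p)=\Phi(b,p)\cdot(a\cdot p)$ requires $a$ and $\Phi(b,p)$ to commute. You say ``a short calculation'' -- it would be worth flagging explicitly that this calculation uses abelianness, since this is the only place the torus hypothesis (as opposed to general Lie group) enters the homomorphism check.
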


\begin{proof}
The uniqueness follows immediately from the fact that $Q$ acts freely on $M$.

If $x \in \overline{X}$, then the fiber $\pi^{-1}(x)$ is $K_X$-invariant.  Let $p \in \pi^{-1}(x)$; then for every $a\in K_X$ there exists a unique 
$b\in Q$ such that $a\cdot p = b \cdot p$, so  we can define a surjective map $\rho_p \colon K_X \to Q$ by the condition
$$\rho_p(a) \cdot p = a\cdot p$$
for all $a \in K$. Moreover, $\rho_p(1) = 1$ and 
$$\rho_p(ab)\cdot p = ab\cdot p = a\rho_p(b)\cdot p = \rho_p(b)a\cdot p = \rho_p(b)\rho_p(a)\cdot p = \rho_p(a)\rho_p(b)\cdot p\; ,$$
hence $\rho_p\colon K_X \to Q$ is a surjective morphism of groups. If $q\in \pi^{-1}(x)$ is another point in the fiber over $x$, then $q=b\cdot p$ 
for some $b \in Q$. If $a\in K_X$, then 
$$b\rho_p(a) \cdot p = ab\cdot p = a\cdot q = \rho_q(a)\cdot q = \rho_q(a)b \cdot p\; ,$$
hence $\rho_p(a) = \rho_q(a)$ for all $a$. The morphism $\rho_p$ does not depend on the chosen point in the fiber $\pi^{-1}(x)$ and 
we can define $\rho_x \colon K_X \to Q$, $\rho_x =\rho_p$ for $p \in \pi^{-1}(x)$. 
Since $X$ is connected, the morphism $\rho_x$ does not depend on the chosen point $x \in \overline{X}$, and we define $\rho_X = \rho_x$ for $x \in X$.
The morphism $\rho_X$ is obviously surjective, since $\rho_X(a) = a$ for all $a \in Q$.
\end{proof}

Let $T_X = \ker \rho_X$ and $\ft_X$ the Lie algebra of $T_X$. Then
$\pi^{-1}(X)$ is included in one of the connected components of  $M^{T_X}$. Let $Z=Z_X$ be the open dense infinitesimal stratum of that component,
and $\ft_Z$ its infinitesimal stabilizer. Then $\ft_X \subset \ft_Z \subset \fk_X$ and the inclusion $\ft_X \subset \ft_Z$ induces a morphism 
$$\pi_{\ft_X}^{\ft_Z} \colon \S(\ft_Z^*) \to \S(\ft_X^*)\; .$$
The composition 
$$T_X \to K_X \to K_X/Q$$
is an isomorphism between $T_X$ and $K_X/Q= (T/Q)_X$; it induces an isomorphism $\varphi_X \colon (\ft/\fq)_X \to \ft_X$ and therefore an isomorphism 
$$\varphi_X^* \colon \S(\ft^*_X) \to \S((\ft/\fq)_X^*)\; .$$

The map $\pi_*$ is defined by
\begin{equation}
\pi_*A(X) = \varphi_X^* \pi_{\ft_X}^{\ft_Z}  (A(Z)) = (i_{\ft_X}^{\ft_Z} \circ \varphi_X)^* (A(Z))\; 
\end{equation}
for all assignments $A \in \cA_T(M)$ and infinitesimal stratum $X \in \cP_{T/Q} (M/Q)$.

\begin{lem}
If $A \in \cA_T(M)$, then $\pi_* A \in \cA_{T/Q}(M/Q)$.
\end{lem}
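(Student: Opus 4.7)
The plan is to verify the two defining conditions of an assignment in $\cA_{T/Q}(M/Q)$ for $\pi_* A$.  Degree preservation is immediate, since $\varphi_X^*$ and $\pi_{\ft_X}^{\ft_{Z_X}}$ are both degree-preserving; so the whole content is to show the compatibility $(\pi_* A)(Y) = \pi_Y^X((\pi_* A)(X))$ whenever $X \preccurlyeq Y$ in $\cP_{T/Q}(M/Q)$.

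Fix such a pair $X \preccurlyeq Y$.  The argument rests on three observations.  \emph{First}, $T_Y \subseteq T_X$: for $a \in T_Y = \ker \rho_Y$ one has $a \cdot p = p$ on $\pi^{-1}(\overline{Y})$ and in particular on $\pi^{-1}(X)$, so since $K_Y \subseteq K_X$, the defining identity for $\rho_X$ together with freeness of $Q$ force $\rho_X(a) = 1$.  Consequently $\ft_Y \subseteq \ft_X$ and $M^{T_X} \subseteq M^{T_Y}$.  \emph{Second}, $Z_X \preccurlyeq Z_Y$ in $\cP_T(M)$: since $Q$ is a connected torus and $\overline{Y}$ is connected, the $Q$-bundle $\pi^{-1}(\overline{Y}) \to \overline{Y}$ has connected total space, which lies in $M^{T_Y}$ by the defining property of $\rho_Y$, hence in a single connected component of $M^{T_Y}$; this component must be $C_Y$.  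Therefore $\pi^{-1}(X) \subseteq C_Y$, and the component $C_X$ of $M^{T_X}$ through $\pi^{-1}(X)$ (being connected inside $M^{T_Y}$) also lies in $C_Y$, yielding $Z_X \subseteq C_X \subseteq C_Y \subseteq \overline{Z_Y}$.  \emph{Third}, the square
\[
\begin{CD}
\S(\ft_X^*) @>\varphi_X^*>> \S((\ft/\fq)_X^*) \\
@V \pi_{\ft_Y}^{\ft_X} VV @VV \pi_Y^X V \\
\S(\ft_Y^*) @>\varphi_Y^*>> \S((\ft/\fq)_Y^*)
\end{CD}
\]
commutes, since $\varphi_X^{-1}$ and $\varphi_Y^{-1}$ are both induced by the canonical projection $\fk \to \fk/\fq$ (applied to $\fk_X$ and $\fk_Y$ respectively), which is compatible with the inclusion $\fk_Y \subseteq \fk_X$.

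With these observations in hand, together with the assignment property $A(Z_Y) = \pi_{Z_Y}^{Z_X}(A(Z_X))$ (which applies thanks to the second observation), the verification reduces to the chain
\[
(\pi_* A)(Y) = \varphi_Y^* \pi_{\ft_Y}^{\ft_{Z_Y}} \pi_{Z_Y}^{Z_X}(A(Z_X)) = \varphi_Y^* \pi_{\ft_Y}^{\ft_X} \pi_{\ft_X}^{\ft_{Z_X}}(A(Z_X)) = \pi_Y^X \varphi_X^* \pi_{\ft_X}^{\ft_{Z_X}}(A(Z_X)) = \pi_Y^X((\pi_* A)(X)),
\]
where the second equality composes restrictions along the chain of inclusions $\ft_Y \subseteq \ft_X \subseteq \ft_{Z_X}$ (matching the chain $\ft_Y \subseteq \ft_{Z_Y} \subseteq \ft_{Z_X}$ given by the second observation), and the third invokes the commutative square.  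I expect the main obstacle to be the second observation: matching the $T$-orbit-type strata of $M$ with the $T/Q$-orbit-type strata of $M/Q$ is the one place where the geometry of the free quotient $\pi$ really enters, and the argument hinges on the connectedness of the torus $Q$ to ensure that $\pi^{-1}(\overline{Y})$ is connected.
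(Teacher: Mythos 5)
Your proof is correct and follows essentially the same route as the paper's: establish $T_Y \subseteq T_X$, deduce $Z_X \preccurlyeq Z_Y$, and conclude via the commutativity of the square relating $\varphi_X$, $\varphi_Y$ and the restriction maps. You supply a bit more detail than the paper on the step $Z_X \preccurlyeq Z_Y$ (the paper asserts it from $M^{T_X}\subseteq M^{T_Y}$, while you verify that the relevant connected components nest via connectedness of $\pi^{-1}(\overline{Y})$), which is a worthwhile clarification but not a different argument.
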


\begin{proof}
We show that $\pi_* A$ satisfies the compatibility conditions for inclusions.

Let $X \preccurlyeq Y$ be two infinitesimal strata for $M/Q$. Then $(\ft/\fq)_Y \subseteq (\ft/\fq)_X = \fk_X/q$, hence 
$(\ft/\fq)_Y = \fk_Y/\fq$, for some $\fq \subseteq \fk_Y \subseteq \fk_X$. The uniqueness of the morphisms 
$\rho_X$ and $\rho_Y$ implies that $\rho_Y$ is the restriction of $\rho_X$ to $K_Y$ and then $T_Y = T_X \cap K_Y$. 
Therefore $M^{\ft_X} \subseteq M^{\ft_Y}$, which implies $Z_X \subseteq \overline{Z_Y}$, so $Z_X \preccurlyeq Z_Y$ in $\cP_T(M)$ and 
$\ft_{Z_Y} \subseteq \ft_{Z_X}$. The commutative diagram 
$$
\begin{CD}
(\ft/\fq)_Y @>\varphi_Y>> \ft_Y @>>> \ft_{Z_Y}   \\
@VVV         @VVV                  @VVV          \\
(\ft/\fq)_X @>\varphi_X >> \ft_X @>>> \ft_{Z_X}  
\end{CD} \; 
$$
implies that 
$$\pi_Y^X ((\pi_*A)(X)) = (\pi_*A) (Y)\; ,$$
hence $\pi_*A$ is a polynomial assignment on the quotient space $M/Q$. 
\end{proof}

\begin{thm}\label{pi*}
The map 
$$\pi_* \colon \cA_T^*(M) \to  \cA_{T/Q}^{*}(M/Q)$$
is an isomorphism.
\end{thm}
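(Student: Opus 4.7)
The plan is to construct an explicit inverse $\psi \colon \cA_{T/Q}^*(M/Q) \to \cA_T^*(M)$. The key observation is that the quotient map $\pi$ induces a natural order-preserving bijection between the posets $\cP_T(M)$ and $\cP_{T/Q}(M/Q)$, with respect to which the definition of $\pi_*$ collapses to pullback along the canonical isomorphism $\varphi_X$.

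I would first show that $\pi$ carries each $Z \in \cP_T(M)$ onto a single stratum $\pi(Z) \in \cP_{T/Q}(M/Q)$, and that $\pi^{-1}(X) \in \cP_T(M)$ for each $X \in \cP_{T/Q}(M/Q)$. Since $Q$ acts freely, $\fq \cap \ft_p = 0$ for every $p \in M$, so the stabilizer Lie algebra of $\pi(p)$ in $T/Q$ equals $(\fq + \ft_p)/\fq$, canonically isomorphic to $\ft_p$ via the projection $\ft \to \ft/\fq$. This Lie algebra is constant along $Z$, and because $Q$ is compact and connected the fibers of $\pi$ are connected; hence $\pi(Z)$ is a connected $T/Q$-invariant subset of constant stabilizer Lie algebra, and thus a stratum of $M/Q$. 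Conversely, using the morphism $\rho_X \colon K_X \to Q$ constructed above, every $p \in \pi^{-1}(X)$ satisfies $\ft_p = \ft_X$ (the argument $T_X = K_X \cap T_p$, combined with $T_p \subseteq K_X$ and a dimension count), and $\pi^{-1}(X)$ is connected as a $Q$-bundle over $X$, hence a single stratum of $M$.

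The crucial step is to identify the auxiliary stratum $Z_X$ in the paper's definition of $\pi_*$ with $\pi^{-1}(X)$. By construction $Z_X$ is the open dense stratum of the connected component $M'$ of $M^{T_X}$ containing $\pi^{-1}(X)$. Since $X$ is open in $(M/Q)^{(\ft/\fq)_X}$ and $\pi$ is an open map, $\pi^{-1}(X)$ is open in $M'$, and so it meets the open dense stratum $Z_X$; comparing stabilizers on the intersection forces $\ft_{Z_X} = \ft_X$, and then $Z_X = \pi^{-1}(X)$ since two strata in the same connected component with the same stabilizer that intersect must coincide. Consequently, the restriction map $\pi_{\ft_X}^{\ft_{Z_X}}$ in the paper's formula is the identity, and
$$
\pi_* A(X) \;=\; \varphi_X^*\bigl(A(\pi^{-1}(X))\bigr).
$$

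With this streamlined formula, the inverse is forced. I would set
$$
(\psi B)(Z) \;=\; \bigl(\varphi_{\pi(Z)}^*\bigr)^{-1}\bigl(B(\pi(Z))\bigr), \qquad Z \in \cP_T(M),\ B \in \cA_{T/Q}^*(M/Q),
$$
and verify that $\psi B$ is an assignment: for $Z \preccurlyeq Z'$, both $\varphi_{\pi(Z)}$ and $\varphi_{\pi(Z')}$ are inverses of the projection $\ft \to \ft/\fq$ restricted to $\ft_Z$ and $\ft_{Z'}$ respectively, so the square of restriction maps and pullback isomorphisms commutes, yielding the required compatibility. The identities $\pi_* \circ \psi = \mathrm{id}$ and $\psi \circ \pi_* = \mathrm{id}$ then follow at once. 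The main obstacle is the identification $Z_X = \pi^{-1}(X)$, which requires unwinding the definitions to confirm that the somewhat indirect candidate $Z_X$ is really the same stratum as the more natural $\pi^{-1}(X)$; in particular one must rule out the a priori possibility that $M'$ has principal stabilizer strictly larger than $\ft_X$.
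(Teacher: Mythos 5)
Your overall strategy---constructing an explicit inverse of $\pi_*$ built from the isomorphisms $\varphi_X$ coming from the morphisms $\rho_X$---is the same as the paper's. The paper defines its inverse $\sigma$ on a stratum $Z\in\cP_T(M)$ using the auxiliary stratum $\widetilde{Z}$ (the open dense stratum of the component of $(M/Q)^{K_Z/Q}$ containing $\pi(Z)$), which mirrors the auxiliary $Z_X$ used in the definition of $\pi_*$; it never asserts $Z_X=\pi^{-1}(X)$ or $\widetilde{Z}=\pi(Z)$, and it leaves the verification that $\sigma$ and $\pi_*$ are mutually inverse to the reader. Your refinement---showing that $\pi$ induces an order-preserving bijection $\cP_T(M)\to\cP_{T/Q}(M/Q)$ and that the auxiliary strata coincide with $\pi^{-1}(X)$ and $\pi(Z)$, so that the restriction $\pi_{\ft_X}^{\ft_{Z_X}}$ in the paper's formula is the identity---is a genuine and worthwhile clean-up that makes the composite identities transparent; it is also true (the facts check out), and it is not established in the paper's own proof.

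One small logical gap worth closing. You deduce from ``$\pi(Z)$ is a connected $T/Q$-invariant subset of constant infinitesimal stabilizer'' that $\pi(Z)$ \emph{is} a stratum, and similarly for $\pi^{-1}(X)$. That inference is not valid on its own: such a set is only contained in a stratum (a stratum is a full connected component of the exact-stabilizer locus, so one must also rule out the set being a proper connected subset of it). The gap is easy to fill with a short back-and-forth: let $Z'$ be the stratum containing $\pi^{-1}(X)$; for $p\in Z'$ one has $\ft_p=\ft_{Z'}=\ft_X$, so the stabilizer of $\pi(p)$ is $(\ft_X+\fq)/\fq=(\ft/\fq)_X$, hence $\pi(Z')$ is a connected set with exact stabilizer $(\ft/\fq)_X$ meeting $X$, so $\pi(Z')\subseteq X$ and therefore $Z'\subseteq\pi^{-1}(X)$, giving $Z'=\pi^{-1}(X)$; the forward direction $\pi(Z)\in\cP_{T/Q}(M/Q)$ then follows by taking $X$ to be the stratum containing $\pi(Z)$. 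With that added, your proof is correct and, if anything, more complete than the paper's.
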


\begin{proof} The proof is based on the construction of an inverse map
$$\sigma \colon \cA_{T/Q}^*(M/Q) \to \cA_T^{*}(M) $$
as follows: Let $A \in \cA_{T/Q}^*(M/Q)$ and $X \in \mathcal{P}_T^{r}(M)$ a degree-$r$ infinitesimal orbit-type 
stratum, with infinitesimal stabilizer $\ft_X$ of codimension $r$ in $\ft$. For $p \in X$, let $K_p$ be the subtorus of $T$ that stabilizes the $Q$-fiber through $p$. Let $\rho_p \colon K_p \to Q$ be the map defined by
\begin{equation*}
a\cdot p = \rho_p(a) \cdot p
\end{equation*}
for all $a \in K_p$. Then $\rho_p$ is a surjective morphism of groups and the kernel of $\rho_p$ 
is the stabilizer $T_X = \exp(\ft_X)$ of $p$. Because $X$ is connected, both $K_p$ and $\rho_p$ are independent of $p \in X$, 
hence we have a subtorus $K_X$ of $T$ and a surjective group morphism $\rho_X \colon K_X \to Q$ such that 
\begin{equation*}
a \cdot p = \rho_X(a) \cdot p
\end{equation*}
for every $a \in K_X$ and $p \in X$. The kernel of this morphism is $T_X$ and the corresponding infinitesimal exact sequence
\begin{equation*}
\ft_X \to \fk_X \to \fq
\end{equation*}
defines an isomorphism $\psi_X \colon \ft_X \to \fk_X/\fq$. Moreover, $\fk_X = \ft_X \oplus \fq$. To see that, note that both $\ft_X$ and $\fq$ are included in $\fk_X$, hence their sum is, too. The sum is direct because $Q$ acts freely and $T_X$ stabilizes points in $X$, hence $\ft_X \cap \fq = \{ 0 \}$. The direct sum has the same dimension as $k_X$ and is included in $\fk_X$, hence it is equal to $\fk_X$.

The infinitesimal stabilizer of points in $X/Q$ is $\fk_X/\fq$; let $\widetilde{X}$ be the open dense stratum in the connected component of $(M/Q)^{K_X/Q}$ containing $X/Q$. We define
\begin{equation}
\sigma(A) (X) = \psi_X^* A(\widetilde{X})\; .
\end{equation}
If $X \preccurlyeq Y$, then the diagram
$$
\begin{CD}
\ft_Y @>\psi_Y >> \fk_Y/\fq  \\
@VVV         @VVV                    \\
\ft_X @>\psi_X >> \fk_X/\fq 
\end{CD} \; 
$$
commutes and $\widetilde{X} \preccurlyeq \widetilde{Y}$. Therefore
\begin{equation*}
\sigma(A)(Y) = \psi_Y^* (A(\widetilde{Y})) = \psi_Y^* \pi_{\widetilde{Y}}^{\widetilde{X}} A(\widetilde{X}) = \pi_Y^X \psi_X^* A(\widetilde{X}) = \pi_Y^X \sigma(A)(X)
\end{equation*}
hence $\sigma(A) \in  \cA_T^{*}(M)$.

The compositions $\sigma \circ \pi_*$ and $\pi_* \circ \sigma$ are identities, 
hence $\sigma$ is the inverse of $\pi_*$.
\end{proof}

\subsection{Kirwan map for assignments}
Let $M$ be a Hamiltonian $T$-space with moment map $\Phi \colon M \to \ft^*$ and $Q\subset T$ a subtorus of $T$. 
Then $M$ is also a 
Hamiltonian $Q$-space, with induced moment map $\Phi_Q \colon M \to \fq^*$. 
Suppose 0 is a regular value of $\Phi_Q$ and $Q$ acts 
freely on $M_0 = \Phi_Q^{-1}(0)$. The reduced space $M_{red} = M_0/Q$ is 
a $T/Q$-Hamiltonian space. If $i \colon M_0 \to M$ is the inclusion and 
$\pi \colon M_0 \to M_{red} = M_0/Q$ the quotient map, then the composition
$$\cA_T(M) \xrightarrow{i^*} \cA_T(M_0) \xrightarrow{\pi_*} \cA_{T/Q}(M_{red})$$
defines a canonical map
$$\cK_{\cA} = \pi_* i^* \colon \cA_T(M) \to \cA_{T/Q}(M_{red}) \; .$$

The map $\cK_{\cA}$ is the assignment version of the Kirwan map 
$$\cK_H \colon H_T^*(M) \xrightarrow{i^*} H_T^*(M_0) 
\xrightarrow{\pi_*} H_{T/Q}^*(M_{red})$$
in equivariant cohomology.

\def\cprime{$'$}

\end{document}